\newtheorem{lemma}{Lemma}[section]
\newtheorem{conjecture}[lemma]{Conjecture} 
\newtheorem{proposition}[lemma]{Proposition} 
\newtheorem{theorem}[lemma]{Theorem}
\theoremstyle{definition}
\newtheorem{definition}{Definition}[section]
\newtheorem{example}{Example}[section]
\theoremstyle{remark}
\numberwithin{equation}{section}
\title[A Molev-Sagan type formula]{A Molev-Sagan type formula for double Schubert polynomials}
\author{Matthew J. Samuel}
\address{Monroe Township, Middlesex County, New Jersey, United States}
\subjclass[2020]{05E05, 05E14, 05A15, 05A05, 14N10}
\email{matthematics@gmail.com}
\keywords{Double Schubert polynomials, Schubert polynomials, structure constants, Littlewood-Richardson, Pieri, factorial Schur polynomials, elementary symmetric polynomials, RC-graphs, pipe dreams}
\DeclareMathOperator{\sch}{\mathfrak{S}}
\newcommand{\tom}[1]{\xrightarrow{#1}}
\begin{document}

\begin{abstract}
We give a Molev-Sagan type formula for computing the product $\sch_u(x;y)\sch_v(x;z)$ of two double Schubert polynomials in different sets of coefficient variables where the descents of $u$ and $v$ satisfy certain conditions that encompass Molev and Sagan's original case and conjecture positivity in the general case. Additionally, we provide a Pieri formula for multiplying an arbitrary double Schubert polynomial $\sch_u(x;y)$ by a factorial elementary symmetric polynomial $E_{p,k}(x;z)$. Both formulas remain positive in terms of the negative roots when we set $y=z$, so in particular this gives a new equivariant Littlewood-Richardson rule for the Grassmannian, and more generally a positive formula for multiplying a factorial Schur polynomial $s_{\lambda}(x_1,\ldots,x_m;y)$ by a double Schubert polynomial $\sch_v(x_1,\ldots,x_p;y)$ such that $m\geq p$. An additional new result we present is a combinatorial proof of a conjecture of Kirillov of nonnegativity of the coefficients of skew Schubert polynomials, and we conjecture a weight-preserving bijection between a modification of certain diagrams used in our formulas and RC-graphs/pipe dreams arising in formulas for double Schubert polynomials.

\end{abstract}
\maketitle

\section{Introduction}

Schubert polynomials were originally defined by Lascoux and Sch\"utzenberger in \cite{lsschub}, and double Schubert polynomials are a generalization found in \cite{notes}. The double Schubert polynomials are polynomials in two infinite sets of variables $\{x_i:i\in\mathbb{N}\}$ and $\{y_i:i\in\mathbb{N}\}$ with integer coefficients and are denoted by $\sch_u(x;y)$ for permutations $u$. Double Schubert polynomials are linearly independent and in fact form a basis of the polynomial ring over the coefficient ring $\mathbb{Z}[y]$. Double Schubert polynomials are interesting first and foremost because they represent Schubert classes in the torus-equivariant cohomology ring of the complete flag variety. If we introduce a third infinite set of variables $\{z_i:i\in \mathbb{N}\}$, we may write the product of two double Schubert polynomials with different sets of coefficient variables, defining coefficients $c_{uv}^w(y;z)$ as follows:
$$\sch_u(x;y)\sch_v(x;z)=\sum_{w\in S_\infty}{c_{uv}^w(y;z)\sch_w(x;y)}$$

We have the following conjecture:
\begin{conjecture} \label{conjecture:positive}
For all $u,v,w$ we have that $c_{uv}^w(y;z)$ is a polynomial in the differences $y_i-z_j$ with nonnegative integer coefficients. 
\end{conjecture}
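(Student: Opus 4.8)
The plan is to prove Conjecture~\ref{conjecture:positive} by exhibiting a manifestly cancellation-free combinatorial rule for the coefficients $c_{uv}^w(y;z)$, modeled on the equivariant Littlewood--Richardson rule to which the conjecture specializes when $y=z$. The starting point is the pipe dream (RC-graph) expansion $\sch_u(x;y)=\sum_{D}\prod_{(i,j)\in D}(x_i-y_j)$ together with the analogous expansion of $\sch_v(x;z)$ in the variables $x$ and $z$. Multiplying these two generating functions, I would look for a straightening algorithm --- an insertion procedure, or a sequence of ``chute''/``ladder''-type local moves --- that rewrites the product in the basis $\{\sch_w(x;y)\}$ while only ever introducing new linear factors of the shape $(y_i-z_j)$ with a plus sign; the toy identity $\sch_{s_1}(x;z)=(x_1-y_1)+(y_1-z_1)$ already shows why this is the correct normalization. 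The Pieri case, in which $\sch_v(x;z)$ is replaced by a factorial elementary symmetric polynomial $E_{p,k}(x;z)$, is the natural base case of such an algorithm and is exactly the Pieri formula developed elsewhere in the paper.

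The second step is the passage from a general factor $\sch_v(x;z)$ to these Pieri building blocks. Since $\sch_v(x;z)$ is not a nonnegative combination of the $E_{p,k}(x;z)$, one cannot simply iterate the Pieri rule; instead I would use a Monk/transition recursion in the spirit of Lascoux--Sch\"utzenberger, expressing $\sch_v$ through successive multiplications by linear factors $(x_i-z_j)$ together with divided-difference corrections, and carefully track the signs produced at each transition. The descent hypotheses on $u$ and $v$ singled out in the abstract are presumably precisely the configurations for which this recursion never creates a negative term, so that the positive combinatorial model survives intact; they also subsume Molev and Sagan's original factorial-Schur case. In parallel, the combinatorial proof of Kirillov's conjecture on skew Schubert polynomials established in the paper should provide the nonnegativity input that keeps the intermediate ``skew'' objects positive.

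The decisive obstacle is controlling cancellation in full generality. Every Molev--Sagan-type derivation --- whether via Jacobi--Trudi determinants, Gessel--Viennot lattice paths, or iterated divided differences --- first produces a signed alternating expression, and converting it into a positive model requires either an explicit sign-reversing involution on the ``bad'' terms or a wholly independent, typically geometric, derivation. Here the difficulty is amplified by the interaction of the two descent sets of $u$ and $v$: when they interleave badly, no single linear order on the straightening steps seems to keep every intermediate coefficient positive, so a complete proof would likely demand a genuinely new combinatorial object together with a weight-preserving bijection to the RC-graphs that govern double Schubert polynomials --- precisely the bijection conjectured in the paper. Until such a bijection is available, the realistic scope of an unconditional argument is the special descent cases, with Conjecture~\ref{conjecture:positive} itself remaining open.
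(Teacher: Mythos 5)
You have not proved the statement, but neither does the paper: Conjecture~\ref{conjecture:positive} is stated as an open conjecture, verified computationally for $u,v\in S_5$, and the paper only establishes special cases --- Theorem~\ref{theorem:main} (separated descents), its extension Theorem~\ref{theorem:moregeneral}, the Pieri formula Theorem~\ref{theorem:pieri}, and Kirillov's conjecture (Theorem~\ref{theorem:kirpositive}). Your proposal is candid about this, ending with the admission that the general case remains open, so there is no false claim to object to; but as a ``proof'' of the stated conjecture it is a research plan, not an argument, and should be assessed as such.

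Comparing your plan with what the paper actually does in the cases it settles: you propose starting from pipe dream/RC-graph expansions of both factors and finding a positivity-preserving straightening or insertion algorithm, with the Pieri case as the base. The paper's route to Theorem~\ref{theorem:main} is different in mechanism. It never manipulates pipe dreams; instead it uses the identity $c_{uv}^w(x;z)=\partial_u^w(\sch_v(x;z))$ and the recurrences for skew divided difference operators (Lemmas~\ref{lemma:nodescentzero} and~\ref{lemma:descentcoeffdown}) to show that under the descent hypotheses the problem for $v$ reduces to the problem for its dominant approximation $\monom{v}$ (Proposition~\ref{proposition:premain}); then $\sch_{\monom{v}}(x;z)$ factors as a product of factorial elementary symmetric polynomials (Lemma~\ref{lemma:monomial}), and iterating Proposition~\ref{proposition:minipieri} gives the cancellation-free path formula. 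Your ``Monk/transition recursion with divided-difference corrections'' is in the same spirit as this reduction, but the paper's hypotheses guarantee that no correction terms ever arise (each step is an instance of Lemma~\ref{lemma:descentcoeffdown} with the offending terms vanishing by Lemma~\ref{lemma:nodescentzero}), which is precisely why positivity survives; in the general case, where those correction terms do appear with signs, the paper offers no mechanism either, and the RC-graph bijection you invoke is itself only conjectural in the paper (Example~\ref{example:rc}). So your diagnosis of the obstruction --- uncontrolled cancellation once the descent sets interleave --- matches the actual state of affairs, but neither your sketch nor the paper supplies the sign-reversing involution or independent construction that a proof of Conjecture~\ref{conjecture:positive} would require.
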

At the time of this writing, we have computationally verified this conjecture for all $u,v\in S_5$ for all $w\in S_\infty$. The truth of this conjecture was demonstrated for factorial Schur functions in \cite{molev1999littlewood} via a Littlewood-Richardson rule. $c_{uv}^w(0;0)$ is known to be nonnegative as these are the structure constants in the ordinary cohomology of the complete flag variety, which count points in triple intersections of Schubert varieties. No combinatorial proof is known of this in general, but several widely applicable combinatorial results are known; the full proof is obtained using algebraic geometry. It is also known \cite{graham2001positivity} via another algebraic geometry proof that $c_{uv}^w(y;y)$ is a polynomial in the differences $y_{i+1}-y_i$ with nonnegative integer coefficients, which was shown combinatorially for the Grassmannian in \cite{knutson2003puzzles} and for the two-step flag variety in \cite{buch2015mutations}. It was conjectured by Kirillov \cite{kirillov2007skew} (essentially) that $c_{uv}^w(y;0)$ is a polynomial in $y$ with nonnegative integer coefficients for all $u,v,w$ and proved in the same article that the conjecture holds when $\ell(u,w)=1$. This is clearly a special case of Conjecture \ref{conjecture:positive}. We have verified Kirillov's conjecture, which is easier to test, for $u,v\in S_7$ for all $w\in S_\infty$. Kirillov's conjecture was presented in an alternative form by the author on MathOverflow \cite{question}, and in a comment Dave Anderson suggested that we likely have the tools to prove this with the current state of knowledge, though as far as we know this has not yet been done. We present a combinatorial proof of an additional special case (conjectured separately in Kirillov's article) in Theorem \ref{theorem:kirpositive}.

This article has two main results. The first, Theorem \ref{theorem:main}, is a positive combinatorial formula for $c_{uv}^w(y;z)$ where there exists a positive integer $p$ such that $\ell(us_i)>\ell(u)$ whenever $i<p$ and $\ell(vs_i)>\ell(v)$ whenever $i>p$. Equivalently, it is a positive formula as a polynomial in the differences $y_i-z_j$ for coefficients of $\sch_w(x;y)$ in the expansion of the product 
$$\sch_u(x_1,\ldots,x_m;y)\sch_v(x_1,\ldots,x_p;z)$$ 
where $\sch_u(x_1,\ldots,x_m;y)$ is symmetric in the variables $x_1,\ldots,x_p$. The formula when substituting to obtain $c_{uv}^w(y;y)$ is also nonnegative (though some terms are $0$) in the sense of \cite{graham2001positivity}, which we show in Theorem \ref{theorem:equivpositive}, thus also giving a positive formula for ordinary double Schubert polynomial multiplication for these pairs of permutations; all of the nonzero terms in the formula for $c_{uv}^w(y;y)$ are visibly polynomials in $y_{i+1}-y_i$ with nonnegative integer coefficients. 

The second main result, Theorem \ref{theorem:pieri}, is a Pieri formula for computing $c_{uv}^w(y;z)$ when $v$ is a cycle of the form $c_{p,k}=s_{k-p+1}s_{k-p+2}\cdots s_k$, in which case $\sch_{c_{p,k}}(x;z)$ is a factorial elementary symmetric polynomial, for which we introduce the alternative notation $E_{p,k}(x;z)$. This generalizes the main result of \cite{sottile} and is closely related to the equivariant Pieri formula in \cite{robinsonpieri}. Via the formula in \cite[(4)]{molev1999littlewood} applied to a column shape, which we state here as an explicit formula for $E_{p,k}(x;z)$ (Proposition \ref{proposition:elemformula}), our Pieri formula is positive in the sense of Conjecture \ref{conjecture:positive} and also positive for $c_{uv}^w(y;y)$ in the sense of \cite{graham2001positivity}.

In Section \ref{section:moregeneral} we state a more general result than Theorem \ref{theorem:main} (Theorem \ref{theorem:moregeneral}) that differs only in the choices of the permutations $u$ and $v$; the formula is the same as that of the main result. Theorem \ref{theorem:moregeneral} is the most general case to which our method applies.

\section{Preliminaries}
\newcommand{\code}{\mathfrak{c}}
\newcommand{\monom}[1]{\mu_{#1}}
\begin{definition}[The symmetric group]
There are many references regarding the algebraic combinatorics of symmetric groups as Coxeter groups, for which we will cite \cite{combcox}. We use the notation $S_\infty$ for the infinite symmetric group of bijections $\mathbb{N}\to\mathbb{N}$ fixing all but finitely many elements, with function composition in the usual order being the operation, and $S_n<S_\infty$ is the subgroup of all $u\in S_\infty$ such that $u(i)=i$ if $i>n$. We think of elements of $S_\infty$ as sequences and refer to them as \emph{permutations}. Every element $u\in S_\infty$ is in $S_n$ for some $n$, and if $u\in S_n$ we may write
$$u=[u(1),u(2),\ldots,u(n)]$$
(this is known as ``window notation''). Clearly if $u\in S_n$ then $u\in S_N$ for all $N>n$ as well.

$s_i\in S_\infty$ is the adjacent transposition (also known as a simple reflection) exchanging the values $i$ and $i+1$, and $t_{ab}$ for positive integers $a\neq b$ is the (not necessarily adjacent) transposition exchanging $a$ and $b$ and fixing everything else. Note that we do not impose an order on $a$ and $b$ in the definition of a transposition, so $t_{ab}=t_{ba}$. Every element of $S_\infty$ can be written as a product of adjacent transpositions. For $u\in S_\infty$, $us_i$ is $u$ with the indices $i$ and $i+1$ flipped, and $s_iu$ exchanges the values instead. The \emph{length} $\ell(u)$ is the number of inversions of $u$, the pairs of indices $(i,j)$ with $i<j$ such that $u(i)>u(j)$. For elements $u,w$ with $\ell(u)\leq \ell(w)$ we define $\ell(u,w)=\ell(w)-\ell(u)$. A \emph{word} for an element $u\in S_\infty$ is a sequence of simple reflections 
$$(s_{i_1},\ldots,s_{i_m})$$
such that
$$s_{i_1}\cdots s_{i_m}=u$$
and the word is said to be \emph{reduced} if it is as short as possible. It is well known that a reduced word for $u$ is of length exactly $\ell(u)$. If $u(i)>u(i+1)$, then $\ell(us_i)=\ell(u)-1$, and if $u(i)<u(i+1)$ then $\ell(us_i)=\ell(u)+1$. Indices $i$ with $u(i)>u(i+1)$ are called \emph{(right) descents} of $u$.

The \emph{(right) weak Bruhat order} $\leq$ on permutations is the partial ordering that is the reflexive, transitive closure of the relation $v\trianglelefteq vs_i$ whenever $\ell(vs_i)=\ell(v)+1$. In general, if $v\leq w$, then $\ell(v^{-1}w)=\ell(w)-\ell(v)$.

We denote by $w_0(n)$ the longest element (element with the most inversions) of $S_{n+1}$. This is easily seen to be
$$w_0(n)(i)=\begin{cases}
n+2-i&\mbox{ if }i\leq n+1\\
i&\mbox{ if }i>n+1
\end{cases}$$
\end{definition}

\begin{definition}[Double Schubert polynomials, $\partial^w$, $\partial_u^w$]
A simple definition of double Schubert polynomials is through divided difference operators as defined in \cite{bgg}. $S_\infty$ acts on the polynomial ring by extending via automorphisms the rule
$$u(x_j)=x_{u(j)}$$
We also insist that $u(y_j)=y_j$ and $u(z_j)=z_j$ for all $j$. We define the divided difference operator $\partial^{s_i}$ by
$$\partial^{s_i}(P) = \frac{P-s_i(P)}{x_i-x_{i+1}}$$
It is well known that applying divided difference operators to a polynomial yields a polynomial; this is proved, for example, in \cite{samuelleibniz}. 

The divided difference operators can be composed to yield operators indexed by the symmetric group, $\partial^u$ for $u\in S_\infty$, defined by
$$\partial^u=\partial^{s_{i_1}}\cdots\partial^{s_{i_{\ell(u)}}}$$
where $(s_{i_1},\ldots,s_{i_{\ell(u)}})$ is a reduced word for $u$. This does not depend on the choice of reduced word. If $\ell(us_i)>\ell(u)$, then
$$\partial^u\partial^{s_i}=\partial^{us_i}$$
If $\ell(us_i)<\ell(u)$, then
$$\partial^u\partial^{s_i}=0$$
Divided difference operators satisfy the Leibniz formula
\begin{equation} \label{equation:leibniz}
\partial^{s_i}(PQ)=\partial^{s_i}(P)s_i(Q)+P\partial^{s_i}(Q)
\end{equation}
which can be seen by direct computation. There exist skew divided difference operators $\partial_u^w$ (the same as those defined in \cite{samuelleibniz}, differing by a permutation from those in \cite{notes} and \cite{kirillov2007skew}) defined by the more general Leibniz formula
$$\partial^w(PQ)=\sum_u\partial^u(P)\partial_u^w(Q)$$
We identify divided difference operators, group elements, and skew divided difference operators with the corresponding elements of the nil-Hecke ring \cite{kostant1986nil}, which is a free left module over $\mathbb{Z}[x]$ with basis $\{\partial^u\mid u\in S_\infty\}$. For example,
$$s_i=1+(x_{i+1}-x_{i})\partial^{s_i}$$

It is shown in \cite{samuelleibniz} that
$$\partial_u^w=\sum_v c_{uv}^w(x;x)\partial^v$$
More usefully for this article, skew divided difference operators satisfy the following recurrence relation. For the base case, 
$$\partial_1^1=1$$
and if $u\in S_\infty$ satisfies $u\neq 1$ then
$$\partial_u^1=0$$
Suppose $\ell(ws_i)<\ell(w)$. If $\ell(us_i)<\ell(u)$, then
$$\partial_u^w=\partial_u^{ws_i}\partial^{s_i}+\partial_{us_i}^{ws_i}s_i$$
and if $\ell(us_i)>\ell(u)$ then
$$\partial_u^w=\partial_u^{ws_i}\partial^{s_i}$$

We define, for any $n>0$,
$$\sch_{w_0(n)}(x;y)=\prod_{i+j\leq n+1}(x_i-y_j)$$
Then the double Schubert polynomial $\sch_u(x;y)$ for $u\in S_{n+1}$ is given by
$$\sch_u(x;y)=\partial^{u^{-1}w_0(n)}(\sch_{w_0(n)}(x;y))$$
This definition is shown to yield a well-defined polynomial (i.e., not depending on the choice of $n$) by Macdonald in \cite{notes}. 
\end{definition}

Note that if $\ell(us_i)>\ell(u)$, then $\sch_u(x;y)$ is symmetric in the variables $x_i$ and $x_{i+1}$ and
$$\partial^{s_i}(\sch_u(x;y))=0$$
If $\ell(us_i)<\ell(u)$, then
$$\partial^{s_i}(\sch_u(x;y))=\sch_{us_i}(x;y)$$
We also have the vanishing formula \cite[(6.4)]{notes}
$$\sch_u(y;y)=\delta_{1,u}$$
The collection of $\sch_u(x;y)$ for $u\in S_\infty$ forms a basis for $\mathbb{Z}[x,y,z]$ as a module over $\mathbb{Z}[y,z]$, and by the vanishing formula, applying the divided difference $\partial^w$ then setting $x=y$ allows us to pull out the coefficient of $\sch_w(x;y)$ in the expansion of any polynomial in $\mathbb{Z}[x,y,z]$. 

The skew divided difference operators allow us to obtain coefficients of products of arbitrary polynomials with double Schubert polynomials in $x$ and $y$ as follows. Using the Leibniz formula, for an arbitrary polynomial $P$ and a permutation $u$ we obtain
$$\partial^w(P\sch_u(x;y))=\sum_v\partial_v^w(P)\partial^v(\sch_u(x;y))$$
Again applying the vanishing formula, we obtain the result that substituting $x=y$ in the polynomial $\partial_u^w(P)$ extracts the coefficient of $\sch_w(x;y)$ in the expansion of $P\sch_u(x;y)$. In particular, setting $P=\sch_v(x;z)$, we obtain
$$c_{uv}^w(x;z)=\partial_u^w(\sch_v(x;z))$$
for all $u,v,w\in S_\infty$.

\begin{definition}[Factorial Schur polynomials, Grassmannian permutations]
A permutation $u\in S_\infty$ is said to be \emph{Grassmannian} if it has at most one descent. A factorial Schur polynomial is a double Schubert polynomial corresponding to a Grassmannian permutation \cite[Theorem~4]{bump2011factorial}. Factorial Schur polynomials are indexed by partitions and denoted by $s_\lambda(x_1,\ldots,x_m;y)$. The partition $\lambda$ together with the value of $m$ determine the permutation for the double Schubert polynomial to which $s_\lambda(x_1,\ldots,x_m;y)$ corresponds. Specifically, we define a Grassmannian permutation $w_{\lambda;m}$ corresponding to the partition $\lambda$ with descent at position $m$ as follows. Let
$$\lambda=(\lambda_1,\ldots,\lambda_p)$$
and suppose $m\geq p$. Define $\lambda_i=0$ if $i>p$, then let $w_{\lambda;m}(i)=i+\lambda_{m+1-i}$ if $1\leq i\leq m$. The remainder of $w_{\lambda;m}$ is the complement of $w_{\lambda;m}([m])$ arranged in increasing order. For example, 
$$w_{(3,1,1);4}=[1,3,4,7,2,5,6]$$
Given this definition of $w_{\lambda;m}$, we have the formula
$$s_\lambda(x_1,\ldots,x_m;y)=\sch_{w_{\lambda;m}}(x;y)$$
The \emph{factorial elementary symmetric polynomial} $E_{p,k}(x;y)$ is the special case $s_{(1^p)}(x_1,\ldots,x_k;y)$, about which we go into much more detail below.

Molev and Sagan's Littlewood-Richardson rule \cite{molev1999littlewood} computes the product of two factorial Schur polynomials $s_\lambda(x_1,\ldots,x_m;y)s_\mu(x_1,\ldots,x_m;z)$ where the corresponding permutations have the same descent, i.e. the polynomials have the same number of $x$ variables.
\end{definition}

\begin{definition}[The code $\code(v)$, dominant permutations, the dominant approximation $\monom{v}$, and the partition $\lambda(v)$] \label{definition:permstuff}
We define the \emph{code} $\code(v)$ of a permutation $v\in S_\infty$, a sequence indexed by $i\geq 1$, as follows:
$$\code_i(v)=\#\{j>i\mid v(i)>v(j)\}$$
Note that if $v\in S_{n+1}$, then $\code_i(v)\leq n+1-i$, and it is not hard to see that
$$\ell(v)=\sum_{i=1}^\infty \code_i(v)$$
Also, $\ell(vs_i)=\ell(v)+1$ if and only if $\code_i(v)\leq \code_{i+1}(v)$, in which case $\code_i(vs_i)=\code_{i+1}(v)+1$, $\code_{i+1}(vs_i)=\code_i(v)$, and $\code_j(vs_i)=\code_j(v)$ for $j\notin \{i,i+1\}$.

A permutation $v$ is said to be \emph{dominant} if $\code(v)$ is a partition. For $v\in S_\infty$, we define a dominant permutation $\monom{v}$, the \emph{dominant approximation} of $v$, as follows. If $v$ is dominant, let $\monom{v}=v$. If $v$ is not dominant, let $i$ be the maximal index such that $\code_i(v)<\code_{i+1}(v)$, and define $\monom{v} = \monom{vs_i}$. It is clear that this recursion terminates in a unique dominant permutation. Note that at each step $\ell(vs_i)=\ell(v)+1$, so that $v\leq \monom{v}$ and hence $\ell(v^{-1}\monom{v})=\ell(\monom{v})-\ell(v)$. Define a partition $\lambda(v)$ to be the conjugate of the code of $\monom{v}$, meaning
$$\lambda_i(v)=\#\{j\mid \code_j(\monom{v})\geq i\}$$
\end{definition}

\begin{example}
Let $v=[1,3,5,2,4]$. Then
$$\code(v)=(0,1,2)$$
In the recursion computing the dominant approximation, the first reflection to apply is at index $2$. We have that
$$vs_2 = [1,5,3,2,4]$$
and
$$\code(vs_2)=(0,3,1)$$
Continuing in this manner, we obtain that the remaining reflections are $s_1,s_2$, and hence $\mu_v = vs_2s_1s_2$. We have
$$\mu_v=[5,3,1,2,4]$$
$$\code(\mu_v)=(4,2)$$
which is a partition. $\lambda(v)$ is the conjugate partition, namely
$$\lambda(v)=(2,2,1,1)$$
\end{example}

\begin{definition}[$\tom{k}$, $P_k(u,w)$, $\mathrm{Path}_\lambda(u,w)$, $\mathrm{weight}_{P,\lambda}(y;z)$] \label{definition:pieri}
We define a relation $\tom{k}$ on pairs of permutations, introduced by Sottile \cite{sottile}, which was used originally to indicate when the coefficient of a Schubert polynomial is nonzero in a suitable application of the Pieri formula. In the generalization to double Schubert polynomials in \cite{robinsonpieri}, the same relation applies to all nonzero coefficients in the expansion of the generalized Pieri formula. The relation plays the same role in our case.

Given $u,u'\in S_\infty$ and a positive integer $k$, we declare that $u\tom{k} u'$ if there exists a $p$ with $0\leq p\leq k$ such that there are transpositions $t_{a_1b_1},\ldots,t_{a_pb_p}$ satisfying 
\begin{enumerate}
\item $a_i\leq k<b_i$ for all $i$
\item $a_i\neq a_j$ whenever $i\neq j$
\item $\ell(ut_{a_1b_1}\cdots t_{a_ib_i})=\ell(u)+i$ for all $i$
\item $u'=ut_{a_1b_1}\cdots t_{a_pb_p}$
\end{enumerate}
If $u\tom{k} w$, define 
$$P_k(u,w)=\{u(i)\mid i\leq k\mbox{ and }u(i)=w(i)\}$$
For an integer $j$ we define the weight of this interval by
$$\mathrm{weight}_{u,k}^w(y;z_j)=\prod_{i\in P_k(u,w)}(y_i-z_j)$$
where by convention the empty product is $1$.

Given $u,w\in S_\infty$ and a partition $\lambda$ of length $m$, define $\mathrm{Path}_\lambda(u,w)$ to be the set of sequences defined by
$$\mathrm{Path}_\lambda(u,w)=\{(u_0,u_1,\ldots,u_m)\mid u=u_0\tom{\lambda_1}u_1\tom{\lambda_2}\cdots\tom{\lambda_m}u_m = w\}$$
Then given such a path $P$ and the partition $\lambda$ we define
$$\mathrm{weight}_{P,\lambda}(y;z)=\prod_{i=1}^m\mathrm{weight}_{u_{i-1},\lambda_i}^{u_i}(y;z_i)$$

For illustration in examples, we will represent the elements of $\mathrm{Path}_\lambda(u,w)$ with certain diagrams of numbers. In the diagrams, the permutations $(u_0,\ldots,u_m)$ in the paths will be written vertically in each column, with the columns progressing according to the permutations in the path from left to right. More specifically, the element $u_j(i)$ at the $i$th index in the permutation $u_j$ will be in row $i$ and column $j$, with the column numbers starting at $0$ and row numbers starting at $1$. In column $j$, a horizontal line will be drawn below row $\lambda_j$. The entries that contribute factors to the weight are those where $u_{j-1}(i)=u_j(i)$ for rows $i$ above the horizontal line in column $j$, meaning the number is the same as the number in the same row in the column immediately to the left. These entries will be circled in the diagram. The circled entries in column $j$ represent elements of $P_{\lambda_j}(u_{j-1},u_j)$. If the circled number is $i$ (the value, not the row number) in column $j$, a term of $y_{i}-z_j$ will occur as a factor in the product computing the weight of the path. The weight $\mathrm{weight}_{P,\lambda}(y;z)$ will be written beneath the diagram.

\end{definition}

\newcommand*\circled[1]{\tikz[baseline=(char.base)]{
            \node[shape=circle,draw,inner sep=0.1pt] (char) {#1};}}
\newcommand\mycircled[1]{\makebox[0pt]{\circled{#1}}}
\newcommand\mycircledx[1]{\makebox[0pt]{$\xcancel{\circled{#1}}$}}

\begin{example}
Let $\lambda=(3,2,2)$. Then 
$$P=([1,3,4,2],[1,3,5,2,4],[3,5,1,2,4],[4,5,1,2,3])$$
 is in $\mathrm{Path}_{\lambda}([1,3,4,2],[4,5,1,2,3])$. This corresponds to the following diagram.
\begin{center}
\begin{tabular}{cccc}
1&\multicolumn{1}{|c}{\mycircled{1}}&3&4\\
3&\multicolumn{1}{|c}{\mycircled{3}}&5&\mycircled{5}\\
\cline{3-4}
4&\multicolumn{1}{|c|}{5}&1&1\\
\cline{2-2}
2&2&2&2\\
5&4&4&3\\
\multicolumn{4}{c}{\makebox[0pt]{$(y_1-z_1)(y_3-z_1)(y_5-z_3)$}}
\end{tabular}
\end{center}
$\mathrm{weight}_{P,\lambda}(y;z)$ is written beneath the diagram. $(y_1-z_1)(y_3-z_1)$ corresponds to the circled $1$ and $3$ in column $1$ and $y_5-z_3$ corresponds to the circled $5$ in column $3$.
\end{example}

\section{The first formula}

\begin{definition}[$d_{u,\lambda}^w(y;z)$, $e_{uv}^w(y;z)$]
Given permutations $u,w\in S_\infty$ and a partition $\lambda$ we define a polynomial $d_{u,\lambda}^w(y;z)$ with nonnegative integer coefficients in terms of products of linear terms of the form $y_i-z_j$ as follows:
$$d_{u,\lambda}^w(y;z)=\sum_{P\in\mathrm{Path}_\lambda(u,w)}\mathrm{weight}_{P,\lambda}(y;z)$$
We define coefficients $e_{uv}^w(y;z)$ as follows. If $u,v,w\in S_\infty$, define
$$e_{uv}^w(y;z)=0$$
unless $\ell(wv^{-1}\monom{v})=\ell(w)+\ell(v^{-1}\monom{v})$, in which case define
$$e_{uv}^w(y;z)=d_{u,\lambda(v)}^{wv^{-1}\monom{v}}(y;z)$$
\end{definition}

\begin{theorem}\label{theorem:main}
Let $u,v\in S_\infty$ be such that there exists a $p>0$ for which $\ell(us_i)>\ell(u)$ for all $i<p$ and $\ell(vs_i)>\ell(v)$ for all $i>p$. Then
$$\sch_u(x;y)\sch_v(x;z)=\sum_{w\in S_\infty}e_{uv}^w(y;z)\sch_w(x;y)$$
\end{theorem}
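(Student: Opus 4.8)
The plan is to compute $\partial_u^w(\sch_v(x;z))$ using the recurrence for skew divided difference operators, and to show by induction that the result matches $e_{uv}^w(y;z)$. The key reduction is to pass from $v$ to its dominant approximation $\monom{v}$: since $v\leq\monom{v}$ with $\ell(v^{-1}\monom{v})=\ell(\monom{v})-\ell(v)$, and the path from $v$ to $\monom{v}$ in the recursion for $\monom{v}$ consists of length-increasing multiplications $v\mapsto vs_i$ at the maximal bad index, I would first establish a ``reduction formula'' expressing $c_{uv}^w(y;z)=\partial_u^w(\sch_v(x;z))$ in terms of $c_{u,\monom{v}}^{w'}$ for appropriate $w'$. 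Concretely, $\sch_v(x;z)=\partial^{v^{-1}\monom{v}}(\sch_{\monom{v}}(x;z))$ read in the $z$-variables, and one uses that $\partial^{v^{-1}\monom{v}}$ commutes with multiplication against anything symmetric enough, together with the identity $c_{uv}^w=\sum$ (over the appropriate combinatorics) $c_{u,\monom{v}}^{wv^{-1}\monom{v}}$ when $\ell(wv^{-1}\monom{v})=\ell(w)+\ell(v^{-1}\monom{v})$ and $0$ otherwise. This is exactly the shape of the definition of $e_{uv}^w$ via $d_{u,\lambda(v)}^{wv^{-1}\monom{v}}$, so it reduces the theorem to the dominant case $v=\monom{v}$, where $\sch_v(x;z)=\prod(x_i-z_j)$ over the Young-diagram cells of $\code(v)=\lambda(v)'$.

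For the dominant case I would factor $\sch_{\monom{v}}(x;z)$ column-by-column in its Young diagram: if $\lambda=\lambda(v)$ has columns of heights $\lambda_1\geq\lambda_2\geq\cdots\geq\lambda_m$ then $\sch_{\monom{v}}(x;z)=\prod_{j=1}^m E_{\lambda_j,\lambda_j}(x;z_?)$-type factors — more precisely a product of $m$ factors, the $j$th being $\prod_{i=1}^{\lambda_j}(x_i-z_j)$ after the appropriate reindexing of the $z$'s dictated by the convention in Definition \ref{definition:pieri} (where column $j$ uses $z_j$). Then I would apply the generalized Leibniz formula $\partial^w(P_1\cdots P_m)=\sum\partial^{u_0^{-1}u_1}(P_1)\cdots$, iterate it, and identify each single-column factor $\partial_{u_{j-1}}^{u_j}\big(\prod_{i\leq\lambda_j}(x_i-z_j)\big)\big|_{x=y}$ with $\mathrm{weight}_{u_{j-1},\lambda_j}^{u_j}(y;z_j)$ summed over the one-step relation $u_{j-1}\tom{\lambda_j}u_j$. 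This last identification is essentially the equivariant Pieri computation of Sottile/Robinson: multiplying $\sch_{u_{j-1}}(x;y)$ by the factorial elementary symmetric polynomial $E_{\lambda_j,\lambda_j}(x;z_j)$ expands with coefficients indexed exactly by $u_{j-1}\tom{\lambda_j}u_j$ and weights $\prod_{i\in P_{\lambda_j}(u_{j-1},u_j)}(y_i-z_j)$. Summing over all choices of the intermediate $u_j$ gives $\sum_{P\in\mathrm{Path}_\lambda(u,w)}\mathrm{weight}_{P,\lambda}(y;z)=d_{u,\lambda}^w(y;z)$, as desired.

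The role of the hypothesis on $u$ and $v$ — that there is a $p$ with $\ell(us_i)>\ell(u)$ for $i<p$ and $\ell(vs_i)>\ell(v)$ for $i>p$ — is to guarantee that all the column-factors of $\sch_{\monom{v}}(x;z)$ involve only the variables $x_1,\dots,x_p$, that $\sch_u(x;y)$ is symmetric in $x_1,\dots,x_p$, and that the dominant approximation does not disturb this: this is what makes the iterated Leibniz expansion collapse to a genuine chain of one-step Pieri moves living inside the first $p$ positions, with no ``interference'' from lower reflections. I expect the main obstacle to be exactly this bookkeeping: verifying that under the stated length conditions the recursion for $\monom{v}$ only touches indices $\geq p$, that $\lambda(v)$ has all parts $\leq p$ so every $\tom{\lambda_j}$ move is of the admissible type, and that the single-column Leibniz factors match the Pieri weights on the nose (including the edge cases where a column contributes the empty product). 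Establishing the single-column identity carefully — either by quoting and re-indexing the equivariant Pieri formula of \cite{robinsonpieri} or by a direct divided-difference induction using $s_i=1+(x_{i+1}-x_i)\partial^{s_i}$ and the transposition description of $\tom{k}$ — is the technical heart of the argument; everything else is assembling these pieces through the Leibniz recursion and the vanishing formula $\sch_w(y;y)=\delta_{1,w}$.
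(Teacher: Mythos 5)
You take essentially the same route as the paper: reduce to the dominant approximation $\mu_v$, factor $\sch_{\mu_v}(x;z)$ into the column factors $E_{\lambda_i(v)}(x;z_i)$ via Macdonald's formula, and iterate the one-column equivariant Pieri identity (the paper's Proposition \ref{proposition:minipieri}, proved separately) so that the intermediate permutations trace out exactly the paths defining $d_{u,\lambda(v)}^{wv^{-1}\mu_v}(y;z)$. The one correction: the separated-descents hypothesis enters only in the reduction step --- the recursion from $v$ to $\mu_v$ multiplies by reflections $s_i$ with $i<p$ (not $i\geq p$), which are ascents of $u$ --- while the Pieri iteration for the dominant $\mu_v$ holds for arbitrary $u$ with no hypothesis at all.
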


The condition on $u$ and $v$ in the statement of Theorem \ref{theorem:main} is referred to as $u$ and $v$ having ``separated descents'' in \cite{kzj}, for which a puzzle rule is found for the coefficients $c_{uv}^w(0;0)$, a case that is also covered in \cite{Huang_2022}. In a more recent paper, \cite{knutson_separated} extends this to equivariant K-theory. The main result of \cite{groth_puzzle} is a similar formula for double Grothendieck polynomials, which also gives a positive formula for the $z=0$ case in the coefficients we consider.

We illustrate this with some examples.

\begin{example}
We compute the example using factorial Schur functions in \cite[6.4]{knutson2003puzzles}. Let $u=v=[1,3,2]$, and we first let $w=[1,3,2]$. Then
\begin{align*}
\monom{v}&=[3,1,2]\\
\code(\monom{v})&=(2)\\
\lambda(v)&=(1,1)\\
v^{-1}\monom{v}&=[2,1]\\
wv^{-1}\monom{v}&=[3,1,2]
\end{align*}
Then $c_{uv}^w(y;z)=(y_3-z_2)+(y_1-z_1)$, as witnessed by the following paths.
\begin{center}
\begin{tabular}{ccc}
1&\multicolumn{1}{|c}{3}&\mycircled{3}\\
\cline{2-3}
3&1&1\\
2&2&2\\
\multicolumn{3}{c}{$y_{3} - z_{2}$}
\end{tabular}
\hspace{5pt}
\begin{tabular}{ccc}
1&\multicolumn{1}{|c}{\mycircled{1}}&3\\
\cline{2-3}
3&3&1\\
2&2&2\\
\multicolumn{3}{c}{$y_{1} - z_{1}$}
\end{tabular}
\end{center}
The reference computes the result as $(y_3-z_1)+(y_1-z_2)$ instead, which of course yields the same result but differs from our formula in the terms produced by the combinatorial elements. As we show in Theorem \ref{theorem:equivpositive}, our formula has the advantage that it yields a positive formula in equivariant cohomology when $y$ is substituted for $z$; this is not the case for $(y_3-y_1)+(y_1-y_2)$, as $y_1-y_2$ is not a positive term in the sense of \cite{graham2001positivity}. Our formula gives $(y_3-y_2)+(y_1-y_1)$, and both terms are nonnegative. We note that Molev in \cite{molev2009littlewood} provides a positive formula stemming from Molev and Sagan's rule.

For the remaining paths we write $w$ at the top of the diagram to save space.
\begin{center}
\begin{tabular}{ccc}
\multicolumn{3}{c}{\makebox[0pt]{$[1,4,2,3]$}}\\
1&\multicolumn{1}{|c}{3}&4\\
\cline{2-3}
3&1&1\\
2&2&2\\
4&4&3\\
\multicolumn{3}{c}{$1$}
\end{tabular}
\hspace{5pt}
\begin{tabular}{ccc}
\multicolumn{3}{c}{\makebox[0pt]{$[2,3,1]$}}\\
1&\multicolumn{1}{|c}{2}&3\\
\cline{2-3}
3&3&2\\
2&1&1\\
\multicolumn{3}{c}{$\phantom{1}$}\\
\multicolumn{3}{c}{$1$}
\end{tabular}
\end{center}
Thus
$$\sch_{[1,3,2]}(x;y)\sch_{[1,3,2]}(x;z)=((y_3-z_2)+(y_1-z_1))\sch_{[1,3,2]}(x;y)+\sch_{[1,4,2,3]}(x;y)+\sch_{[2,3,1]}(x;y)$$
which agrees with \cite{knutson2003puzzles}.
\end{example}

\begin{example}
We compute the product of two factorial Schur polynomials with different numbers of $x$ variables, namely $s_{(2,1)}(x_1,x_2,x_3;y)s_{(2)}(x_1,x_2;z)$. This is the product $\sch_{[1,3,5,2,4]}(x;y)\sch_{[1,4,2,3]}(x;z)$. Notice that in this level of generality our formula cannot compute the product in the opposite order.
\begin{align*}
\monom{v}&=[4,1,2,3]\\
\code(\monom{v})&=(3)\\
\lambda(v)&=(1,1,1)\\
v^{-1}\monom{v}&=[2,1]
\end{align*}
\begin{center}
\begin{tabular}{cccc}
\multicolumn{4}{c}{\makebox[0pt]{$[1,3,5,2,4]$}}\\
1&\multicolumn{1}{|c}{3}&\mycircled{3}&\mycircled{3}\\
\cline{2-4}
3&1&1&1\\
5&5&5&5\\
2&2&2&2\\
4&4&4&4\\
\multicolumn{4}{c}{\makebox[0pt]{$(y_3-z_2)(y_3-z_3)$}}
\end{tabular}
$\phantom{(y_3-z_3)}$
\begin{tabular}{cccc}
\multicolumn{4}{c}{\makebox[0pt]{$[1,3,5,2,4]$}}\\
1&\multicolumn{1}{|c}{\mycircled{1}}&3&\mycircled{3}\\
\cline{2-4}
3&3&1&1\\
5&5&5&5\\
2&2&2&2\\
4&4&4&4\\
\multicolumn{4}{c}{\makebox[0pt]{$(y_{1} - z_{1})(y_{3} - z_{3})$}}
\end{tabular}
$\phantom{(y_3-z_3)}$
\begin{tabular}{cccc}
\multicolumn{4}{c}{\makebox[0pt]{$[1,3,5,2,4]$}}\\
1&\multicolumn{1}{|c}{\mycircled{1}}&\mycircled{1}&3\\
\cline{2-4}
3&3&3&1\\
5&5&5&5\\
2&2&2&2\\
4&4&4&4\\
\multicolumn{4}{c}{\makebox[0pt]{$(y_{1} - z_{1})(y_{1} - z_{2})$}}
\end{tabular}
\end{center}
$$((y_3-z_2)(y_3-z_3)+(y_1-z_1)(y_3-z_3)+(y_1-z_1)(y_1-z_2))\sch_{[1,3,5,2,4]}(x;y)$$
\begin{center}
\begin{tabular}{cccc}
\multicolumn{4}{c}{\makebox[0pt]{$[1,4,5,2,3]$}}\\
1&\multicolumn{1}{|c}{3}&\mycircled{3}&4\\
\cline{2-4}
3&1&1&1\\
5&5&5&5\\
2&2&2&2\\
4&4&4&3\\
\multicolumn{4}{c}{$y_{3} - z_{2}$}
\end{tabular}
\hspace{5pt}
\begin{tabular}{cccc}
\multicolumn{4}{c}{\makebox[0pt]{$[1,4,5,2,3]$}}\\
1&\multicolumn{1}{|c}{3}&4&\mycircled{4}\\
\cline{2-4}
3&1&1&1\\
5&5&5&5\\
2&2&2&2\\
4&4&3&3\\
\multicolumn{4}{c}{$y_{4} - z_{3}$}
\end{tabular}
\hspace{5pt}
\begin{tabular}{cccc}
\multicolumn{4}{c}{\makebox[0pt]{$[1,4,5,2,3]$}}\\
1&\multicolumn{1}{|c}{\mycircled{1}}&3&4\\
\cline{2-4}
3&3&1&1\\
5&5&5&5\\
2&2&2&2\\
4&4&4&3\\
\multicolumn{4}{c}{$y_{1} - z_{1}$}
\end{tabular}
\end{center}
$$((y_3-z_2)+(y_4-z_3)+(y_1-z_1))\sch_{[1,4,5,2,3]}(x;y)$$
\begin{center}
\begin{tabular}{cccc}
\multicolumn{4}{c}{\makebox[0pt]{$[2,3,5,1,4]$}}\\
1&\multicolumn{1}{|c}{\mycircled{1}}&2&3\\
\cline{2-4}
3&3&3&2\\
5&5&5&5\\
2&2&1&1\\
4&4&4&4\\
\multicolumn{4}{c}{$y_{1} - z_{1}$}
\end{tabular}
\hspace{5pt}
\begin{tabular}{cccc}
\multicolumn{4}{c}{\makebox[0pt]{$[2,3,5,1,4]$}}\\
1&\multicolumn{1}{|c}{2}&\mycircled{2}&3\\
\cline{2-4}
3&3&3&2\\
5&5&5&5\\
2&1&1&1\\
4&4&4&4\\
\multicolumn{4}{c}{$y_{2} - z_{2}$}
\end{tabular}
\hspace{5pt}
\begin{tabular}{cccc}
\multicolumn{4}{c}{\makebox[0pt]{$[2,3,5,1,4]$}}\\
1&\multicolumn{1}{|c}{2}&3&\mycircled{3}\\
\cline{2-4}
3&3&2&2\\
5&5&5&5\\
2&1&1&1\\
4&4&4&4\\
\multicolumn{4}{c}{$y_{3} - z_{3}$}
\end{tabular}
\end{center}
$$((y_1-z_1)+(y_2-z_2)+(y_3-z_3))\sch_{[2,3,5,1,4]}(x;y)$$
\begin{center}
\begin{tabular}{cccc}
\multicolumn{4}{c}{\makebox[0pt]{$[1,5,3,2,4]$}}\\
1 &\multicolumn{1}{|c}{\mycircled{1}}&3 &5 \\
\cline{2-4}
3 &3 &1 &1 \\
5 &5 &5 &3 \\
2 &2 &2 &2 \\
4 &4 &4 &4 \\
\multicolumn{4}{c}{$y_{1} - z_{1}$}
\end{tabular}
\hspace{5pt}
\begin{tabular}{cccc}
\multicolumn{4}{c}{\makebox[0pt]{$[1,5,3,2,4]$}}\\
1 &\multicolumn{1}{|c}{3} &5 &\mycircled{5}\\
\cline{2-4}
3 &1 &1 &1 \\
5 &5 &3 &3 \\
2 &2 &2 &2 \\
4 &4 &4 &4 \\
\multicolumn{4}{c}{$y_{5} - z_{3}$}
\end{tabular}
\hspace{5pt}
\begin{tabular}{cccc}
\multicolumn{4}{c}{\makebox[0pt]{$[1,5,3,2,4]$}}\\
1 &\multicolumn{1}{|c}{3} &\mycircled{3}&5 \\
\cline{2-4}
3 &1 &1 &1 \\
5 &5 &5 &3 \\
2 &2 &2 &2 \\
4 &4 &4 &4 \\
\multicolumn{4}{c}{$y_{3} - z_{2}$}
\end{tabular}
\end{center}
$$((y_1-z_1)+(y_5-z_3)+(y_3-z_2))\sch_{[1,5,3,2,4]}(x;y)$$
\begin{center}
\begin{tabular}{cccc}
\multicolumn{4}{c}{\makebox[0pt]{$[1,5,4,2,3]$}}\\
1&\multicolumn{1}{|c}{3}&4&5\\
\cline{2-4}
3&1&1&1\\
5&5&5&4\\
2&2&2&2\\
4&4&3&3\\
\multicolumn{4}{c}{$1$}
\end{tabular}
\hspace{5pt}
\begin{tabular}{cccc}
\multicolumn{4}{c}{\makebox[0pt]{$[2,4,5,1,3]$}}\\
1&\multicolumn{1}{|c}{2}&3&4\\
\cline{2-4}
3&3&2&2\\
5&5&5&5\\
2&1&1&1\\
4&4&4&3\\
\multicolumn{4}{c}{$1$}
\end{tabular}
\hspace{5pt}
\begin{tabular}{cccc}
\multicolumn{4}{c}{\makebox[0pt]{$[2,5,3,1,4]$}}\\
1&\multicolumn{1}{|c}{2}&3&5\\
\cline{2-4}
3&3&2&2\\
5&5&5&3\\
2&1&1&1\\
4&4&4&4\\
\multicolumn{4}{c}{$1$}
\end{tabular}
\end{center}
\begin{center}
\begin{tabular}{cccc}
\multicolumn{4}{c}{\makebox[0pt]{$[1,6,3,2,4,5]$}}\\
1&\multicolumn{1}{|c}{3}&5&6\\
\cline{2-4}
3&1&1&1\\
5&5&3&3\\
2&2&2&2\\
4&4&4&4\\
6&6&6&5\\
\multicolumn{4}{c}{$1$}
\end{tabular}
\end{center}
$$\sch_{[1,5,4,2,3]}(x;y)+\sch_{[2,4,5,1,3]}(x;y)+\sch_{[2,5,3,1,4]}(x;y)+\sch_{[1,6,3,2,4,5]}(x;y)$$
\end{example}

\begin{example}
Our formula computes the product of the double Schubert polynomial $\sch_{[1,4,3,2]}(x;y)$, which is not a factorial Schur polynomial, with the factorial Schur polynomial $s_{(2,1)}(x_1,x_2;z)$. We set $u=[1,4,3,2]$, $v=[2,4,1,3]$. We compute the coefficient where $w=[3,4,1,2]$. We have
\begin{align*}
\monom v&=[4,2,1,3]\\
\code(\monom{v})&=(3,1)\\
\lambda(v)&=(2,1,1)\\
v^{-1}\monom{v}&=[2,1]\\
wv^{-1}\monom{v}&=[4,3,1,2]
\end{align*}
\begin{center}
\begin{tabular}{cccc}
1&\multicolumn{1}{|c}{\mycircled{1}}&3&4\\
\cline{3-4}
4&\multicolumn{1}{|c|}{\mycircled{4}}&4&3\\
\cline{2-2}
3&3&1&1\\
2&2&2&2\\
\multicolumn{4}{c}{\makebox[0pt]{$(y_{1} - z_{1})(y_{4} - z_{1})$}}
\end{tabular}
$\phantom{(y_3-z_2)}$
\begin{tabular}{cccc}
1&\multicolumn{1}{|c}{3}&\mycircled{3}&4\\
\cline{3-4}
4&\multicolumn{1}{|c|}{\mycircled{4}}&4&3\\
\cline{2-2}
3&1&1&1\\
2&2&2&2\\
\multicolumn{4}{c}{\makebox[0pt]{$(y_{3} - z_{2})(y_{4} - z_{1})$}}
\end{tabular}
$\phantom{(y_3-z_2)}$
\begin{tabular}{cccc}
1&\multicolumn{1}{|c}{3}&4&\mycircled{4}\\
\cline{3-4}
4&\multicolumn{1}{|c|}{\mycircled{4}}&3&3\\
\cline{2-2}
3&1&1&1\\
2&2&2&2\\
\multicolumn{4}{c}{\makebox[0pt]{$(y_{4} - z_{1})(y_{4} - z_{3})$}}
\end{tabular}
\end{center}
Thus
$$c_{uv}^w(y;z)=(y_1-z_1)(y_4-z_1)+(y_3-z_2)(y_4-z_1)+(y_4-z_1)(y_4-z_3)$$
Now we compute the coefficient for $w=[3,5,1,2,4]$. We have $wv^{-1}\monom{v}=[5,3,1,2,4]$.
\begin{center}
\begin{tabular}{cccc}
1&\multicolumn{1}{|c}{\mycircled{1}}&3&5\\
\cline{3-4}
4&\multicolumn{1}{|c|}{5}&5&3\\
\cline{2-2}
3&3&1&1\\
2&2&2&2\\
5&4&4&4\\
\multicolumn{4}{c}{$y_1-z_1$}
\end{tabular}
$\phantom{(y_3-z_2)}$
\begin{tabular}{cccc}
1&\multicolumn{1}{|c}{3}&\mycircled{3}&5\\
\cline{3-4}
4&\multicolumn{1}{|c|}{5}&5&3\\
\cline{2-2}
3&1&1&1\\
2&2&2&2\\
5&4&4&4\\
\multicolumn{4}{c}{$y_3-z_2$}
\end{tabular}
$\phantom{(y_3-z_2)}$
\begin{tabular}{cccc}
1&\multicolumn{1}{|c}{3}&4&5\\
\cline{3-4}
4&\multicolumn{1}{|c|}{\mycircled{4}}&3&3\\
\cline{2-2}
3&1&1&1\\
2&2&2&2\\
5&5&5&4\\
\multicolumn{4}{c}{\makebox[0pt]{$y_{4} - z_{1}$}}
\end{tabular}
$\phantom{(y_3-z_2)}$
\begin{tabular}{cccc}
1&\multicolumn{1}{|c}{3}&5&\mycircled{5}\\
\cline{3-4}
4&\multicolumn{1}{|c|}{5}&3&3\\
\cline{2-2}
3&1&1&1\\
2&2&2&2\\
5&4&4&4\\
\multicolumn{4}{c}{\makebox[0pt]{$y_{5} - z_{3}$}}
\end{tabular}
\end{center}
Thus
$$c_{uv}^w(y;z)=(y_1-z_1)+(y_3-z_2)+(y_4-z_1)+(y_5-z_3)$$
\end{example}

\section{Proof of Theorem \ref{theorem:main}}

We show in this section that if $u$ and $v$ satisfy the hypotheses of Theorem \ref{theorem:main}, then we may reduce multiplying $\sch_u(x;y)$ by $\sch_v(x;z)$ to multiplying $\sch_u(x;y)$ by the double Schubert polynomial corresponding to $v$'s dominant approximation, $\sch_{\monom v}(x;z)$. We then show that a double Schubert polynomial corresponding to a dominant permutation is a product of factorial elementary symmetric polynomials, which will allow us to reduce the computation to applying the Pieri formula. The special case of the Pieri formula that we will need is Proposition \ref{proposition:minipieri}, which we will prove in Section \ref{section:pieriproof}.

\begin{lemma} \label{lemma:nodescentzero}
Let $u,v,w\in S_\infty$, and suppose $i>0$ is such that $\ell(ws_i)<\ell(w)$. If $\ell(us_i)>\ell(u)$ and $\ell(vs_i)>\ell(v)$, then $c_{uv}^w(y;z)=0$.
\end{lemma}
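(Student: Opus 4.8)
The plan is to invoke the identity $c_{uv}^w(x;z)=\partial_u^w(\sch_v(x;z))$ recorded in the preliminaries, apply one step of the recurrence for skew divided difference operators to peel off a factor of $\partial^{s_i}$, and then use the fact that $\partial^{s_i}$ annihilates any double Schubert polynomial that is symmetric in $x_i$ and $x_{i+1}$.

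Concretely, I would argue as follows. By hypothesis $\ell(ws_i)<\ell(w)$ and $\ell(us_i)>\ell(u)$, so the recurrence relation for skew divided difference operators gives the operator identity $\partial_u^w=\partial_u^{ws_i}\partial^{s_i}$. Applying both sides to $\sch_v(x;z)$ yields
$$c_{uv}^w(x;z)=\partial_u^w(\sch_v(x;z))=\partial_u^{ws_i}\bigl(\partial^{s_i}(\sch_v(x;z))\bigr).$$
Since $\ell(vs_i)>\ell(v)$, the polynomial $\sch_v(x;z)$ is symmetric in $x_i$ and $x_{i+1}$, hence $\partial^{s_i}(\sch_v(x;z))=0$; as $\partial_u^{ws_i}$ is linear it sends $0$ to $0$, so $c_{uv}^w(x;z)=0$ identically. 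Substituting $x=y$ gives $c_{uv}^w(y;z)=0$, as desired.

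There is no genuine obstacle here: the only points requiring attention are verifying that the hypotheses of the skew recurrence are exactly those assumed (the branch $\ell(ws_i)<\ell(w)$ together with $\ell(us_i)>\ell(u)$), and noting that the vanishing statement $\partial^{s_i}(\sch_v(x;z))=0$ is valid with the coefficient alphabet $z$ in place of $y$, which holds because the argument uses only symmetry in the $x$-variables and $s_i$ fixes all $z_j$.
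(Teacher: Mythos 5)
Your proposal is correct and follows essentially the same route as the paper: both use $c_{uv}^w(x;z)=\partial_u^w(\sch_v(x;z))$, the recurrence branch $\partial_u^w=\partial_u^{ws_i}\partial^{s_i}$ valid when $\ell(ws_i)<\ell(w)$ and $\ell(us_i)>\ell(u)$, and the vanishing $\partial^{s_i}(\sch_v(x;z))=0$ coming from symmetry of $\sch_v(x;z)$ in $x_i,x_{i+1}$. Your extra remarks about the hypotheses of the recurrence and the coefficient alphabet $z$ are accurate but simply make explicit what the paper leaves implicit.
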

\begin{proof}
Recall that
$$c_{uv}^w(x;z)=\partial_u^w(\sch_v(x;z))$$
Since $\ell(us_i)>\ell(u)$ we have that
$$\partial_u^w=\partial_u^{ws_i}\partial^{s_i}$$
Since $\partial^{s_i}(\sch_v(x;z))=0$, we have the result.
\end{proof}

\begin{lemma} \label{lemma:descentcoeffdown}
Let $u,v,w\in S_\infty$ and let $i>0$ be such that $\ell(us_i)>\ell(u)$, $\ell(vs_i)>\ell(v)$, and $\ell(ws_i)>\ell(w)$. Then
$$c_{uv}^w(y;z)=c_{u,vs_i}^{ws_i}(y;z)$$
\end{lemma}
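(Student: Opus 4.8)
The plan is to argue exactly as in the proof of Lemma~\ref{lemma:nodescentzero}, using the identity $c_{uv}^w(x;z)=\partial_u^w(\sch_v(x;z))$ recalled in the Preliminaries, but pushing the divided-difference bookkeeping one step further. The goal is to show that the operator identity $\partial_u^{ws_i}(\sch_{vs_i}(x;z))=\partial_u^{w}(\sch_v(x;z))$ holds, which immediately yields $c_{u,vs_i}^{ws_i}(y;z)=c_{uv}^w(y;z)$ upon recording the identification of these polynomials.

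First I would apply the recurrence for skew divided difference operators, being careful to feed it $ws_i$ rather than $w$. Since $\ell(ws_i)>\ell(w)$ we have $\ell\big((ws_i)s_i\big)=\ell(w)<\ell(ws_i)$, so $ws_i$ plays the role of the permutation with a descent at $i$ in that recurrence. Combined with the hypothesis $\ell(us_i)>\ell(u)$, the branch $\ell(us_i)>\ell(u)$ of the recurrence gives $\partial_u^{ws_i}=\partial_u^{(ws_i)s_i}\,\partial^{s_i}=\partial_u^{w}\,\partial^{s_i}$. Next I would use the elementary identity that $\partial^{s_i}$ removes a right descent from a double Schubert polynomial: from $\ell(vs_i)>\ell(v)$ we get $\ell\big((vs_i)s_i\big)<\ell(vs_i)$, hence $\partial^{s_i}\big(\sch_{vs_i}(x;z)\big)=\sch_{(vs_i)s_i}(x;z)=\sch_v(x;z)$ (this identity is stated for the coefficient variables $y$, but it concerns only the $x$-action and so holds verbatim with $z$).

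Putting these together, $c_{u,vs_i}^{ws_i}(x;z)=\partial_u^{ws_i}\big(\sch_{vs_i}(x;z)\big)=\partial_u^{w}\,\partial^{s_i}\big(\sch_{vs_i}(x;z)\big)=\partial_u^{w}\big(\sch_v(x;z)\big)=c_{uv}^w(x;z)$, and setting $x=y$ (equivalently, observing that these are the same polynomial in the first set of variables and $z$) gives the claim. I do not expect a genuine obstacle here; this is essentially a one-line consequence of the two recurrences, and the only thing to watch is the bookkeeping — applying the skew operator recurrence to $ws_i$ (not $w$), checking that the descent hypothesis of that recurrence is supplied by $\ell(ws_i)>\ell(w)$, and confirming that each of the three length conditions on $u$, $v$, $w$ is used exactly once (for the $\partial_u$ branch, for the $\sch_{vs_i}$ reduction, and for the validity of the operator recurrence respectively).
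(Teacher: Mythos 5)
Your proposal is correct and follows essentially the same route as the paper's proof: both establish $\partial_u^{ws_i}=\partial_u^{w}\partial^{s_i}$ (the paper states it directly, you justify it via the skew-operator recurrence applied to $ws_i$) and then use $\partial^{s_i}(\sch_{vs_i}(x;z))=\sch_v(x;z)$ to conclude $c_{u,vs_i}^{ws_i}=c_{uv}^{w}$. The only difference is that you spell out the bookkeeping the paper leaves implicit, which is fine.
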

\begin{proof}
We have that
$$\partial_u^{ws_i}=\partial_u^w\partial^{s_i}$$
Hence
$$\partial_u^{ws_i}(\sch_{vs_i}(x;z))=\partial_u^w(\partial^{s_i}(\sch_{vs_i}(x;z)))=\partial_u^w(\sch_v(x;z))$$
and the result follows.
\end{proof}

\begin{proposition} \label{proposition:premain}
Suppose $p$ is a positive integer, $u\in S_\infty$ satisfies $\ell(us_i)>\ell(u)$ for all $i<p$, and $v\in S_\infty$ satisfies $\ell(vs_i)>\ell(v)$ for all $i>p$. Then $c_{uv}^w(y;z)=0$ unless $\ell(wv^{-1}\monom{v})=\ell(w)+\ell(v^{-1}\monom{v})$, in which case
$$c_{uv}^w(y;z)=c_{u,\monom{v}}^{wv^{-1}\monom{v}}(y;z)$$
\end{proposition}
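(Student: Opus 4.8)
The plan is to induct on $r = \ell(v^{-1}\monom{v})$, peeling off one reflection of the dominant-approximation recursion at a time and invoking Lemmas \ref{lemma:nodescentzero} and \ref{lemma:descentcoeffdown}. The base case $r = 0$ is immediate: then $v$ is dominant, so $\monom{v} = v$ and $v^{-1}\monom{v} = 1$, and the claimed identity is a tautology.

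The one place the hypothesis on $v$ is used is the following observation. The assumption $\ell(vs_i) > \ell(v)$ for all $i > p$ says precisely that $v(p+1) < v(p+2) < \cdots$, and therefore $\code_j(v) = 0$ for every $j > p$. Consequently $\code(v)$ has no ascent at any index $\geq p$ (an ascent at $j = p$ would require $\code_p(v) < 0$, and at $j > p$ it would require $0 < 0$), so if $v$ is not dominant and $i$ is the maximal index with $\code_i(v) < \code_{i+1}(v)$, then $i < p$. Since $i$ and $i+1$ are both $\leq p$, we also get $\code_j(vs_i) = \code_j(v) = 0$ for all $j > p$, so $vs_i$ again satisfies the hypothesis imposed on $v$ with the same $p$; as $\monom{vs_i} = \monom{v}$ and $\ell((vs_i)^{-1}\monom{vs_i}) = r - 1$, the inductive hypothesis will be available for $vs_i$.

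For the inductive step ($r \geq 1$), take $i < p$ as above. Then $\ell(us_i) > \ell(u)$ by the hypothesis on $u$, and $\ell(vs_i) > \ell(v)$. Write $v^{-1}\monom{v} = s_i\,\rho$ with $\rho = (vs_i)^{-1}\monom{v}$; using $vs_i \leq \monom{vs_i} = \monom{v}$ in the weak order this is a reduced factorization with $\ell(\rho) = r - 1$. If $\ell(ws_i) < \ell(w)$, Lemma \ref{lemma:nodescentzero} gives $c_{uv}^w = 0$, and moreover $\ell(w\,v^{-1}\monom{v}) = \ell(ws_i\,\rho) \leq \ell(ws_i) + \ell(\rho) \leq (\ell(w)-1) + (r-1) < \ell(w) + \ell(v^{-1}\monom{v})$, so the exceptional length condition fails and there is nothing further to check. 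If $\ell(ws_i) > \ell(w)$, Lemma \ref{lemma:descentcoeffdown} gives $c_{uv}^w = c_{u,vs_i}^{ws_i}$, and we apply the inductive hypothesis to the triple $(u, vs_i, ws_i)$: we have $\monom{vs_i} = \monom{v}$ and $(ws_i)\rho = w\,s_i\,s_i\,\rho = w\,v^{-1}\monom{v}$, while the inductive length condition $\ell((ws_i)\rho) = \ell(ws_i) + \ell(\rho)$ reads $\ell(w\,v^{-1}\monom{v}) = (\ell(w)+1) + (r-1) = \ell(w) + \ell(v^{-1}\monom{v})$, i.e.\ exactly the condition in the statement. Hence $c_{u,vs_i}^{ws_i}$ vanishes unless that condition holds, in which case it equals $c_{u,\monom{vs_i}}^{(ws_i)\rho} = c_{u,\monom{v}}^{w\,v^{-1}\monom{v}}$, and combining with $c_{uv}^w = c_{u,vs_i}^{ws_i}$ completes the induction.

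I expect the main obstacle to be the second paragraph: pinning down that every reflection produced by the dominant-approximation recursion for $v$ has index $< p$, so that Lemmas \ref{lemma:nodescentzero} and \ref{lemma:descentcoeffdown} are actually applicable given the hypothesis on $u$. Once that is established, the rest is bookkeeping with lengths and the weak order.
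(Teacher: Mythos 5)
Your proposal is correct and follows essentially the same route as the paper's proof: induction on $\ell(v^{-1}\monom{v})$, the observation that the maximal ascent index $i$ of $\code(v)$ satisfies $i<p$ so that Lemmas \ref{lemma:nodescentzero} and \ref{lemma:descentcoeffdown} apply, and the same case split on whether $\ell(ws_i)<\ell(w)$ or $\ell(ws_i)>\ell(w)$. The only cosmetic difference is that you rule out the length condition in the first case by subadditivity of length applied to the reduced factorization $v^{-1}\monom{v}=s_i\rho$, whereas the paper steps through the reduced word; these are interchangeable.
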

\begin{proof}
Note that for any permutation $v$ we have that $\ell(vs_i)>\ell(v)$ for all $i>p$ if and only if $\code_i(v)=0$ for all $i>p$. We prove the result by induction on $\ell(\monom v)-\ell(v)$. If $\ell(\monom v)-\ell(v)=0$, then $c_{uv}^w(y;z)=c_{u,\monom{v}}^{wv^{-1}\monom v}(y;z)$ because $v^{-1}\monom{v}=1$ and $v=\monom{v}$. Otherwise, suppose $i$ is the maximal index such that $\code_i(v)<\code_{i+1}(v)$. Since $\code_{i+1}(v)\neq 0$, we must have that $i<p$. Therefore, by assumption $\ell(us_i)>\ell(u)$. Also, since $\code_j(v)=\code_j(vs_i)=0$ for all $j>p$, it follows that $\ell(vs_is_j)>\ell(vs_i)$ for all $j>p$.

We split the argument into whether or not $\ell(ws_i)<\ell(w)$. Assume first that $\ell(ws_i)<\ell(w)$. Note that $v^{-1}\mu_v$ has a specific expression as a product of $\ell(\mu_v)-\ell(v)$ simple reflections (the ones that occur in the recursion passing from $v$ to $\mu_v$), with the first being $s_i$, and in order for $w(v^{-1}\mu_v)$ to have length $\ell(w)+\ell(v^{-1}\mu_v)$ each of these must successively increase the length by $1$ in the multiplication of $w$ on the right by $v^{-1}\mu_v$. Since the length decreases upon multiplication by the first reflection $s_i$, we must have that $\ell(wv^{-1}\monom{v})<\ell(w)+\ell(v^{-1}\monom{v})$. By Lemma \ref{lemma:nodescentzero} we also have that $c_{uv}^w(y;z)=0$ since both $\ell(us_i)>\ell(u)$ and $\ell(vs_i)>\ell(v)$. Thus, if $\ell(ws_i)<\ell(w)$ then $c_{uv}^w(y;z)=0$ and $\ell(wv^{-1}\mu_v)<\ell(w)+\ell(v^{-1}\mu_v)$, so the result follows in that case.

Assume then that $\ell(ws_i)>\ell(w)$. In that case, by Lemma \ref{lemma:descentcoeffdown}, $c_{uv}^w(y;z)=c_{u,vs_i}^{ws_i}(y;z)$. We have that $\mu_{vs_i}=\mu_v$ by definition, and $\ell(\mu_v)-\ell(vs_i)<\ell(\mu_v)-\ell(v)$. Note that
$$wv^{-1}\mu_v=w(s_is_i)v^{-1}\mu_v=ws_i(vs_i)^{-1}\mu_v$$
By the induction hypothesis, we have that $c_{u,vs_i}^{ws_i}(y;z)=0$ unless 
$$\ell(wv^{-1}\mu_v)=\ell(ws_i(vs_i)^{-1}\mu_v)=\ell(ws_i)+\ell((vs_i)^{-1}\mu_v)=\ell(w)+\ell(v^{-1}\mu_v)$$
In the case where the length condition is satisfied (where there is a possibility that $c_{u,vs_i}^{ws_i}(y;z)\neq 0$), by the induction hypothesis we have that
$$c_{uv}^w(y;z)=c_{u,vs_i}^{ws_i}(y;z)=c_{u,\monom{vs_i}}^{ws_i(vs_i)^{-1}\monom{vs_i}}(y;z)=c_{u,\monom{v}}^{wv^{-1}\monom{v}}(y;z)$$
and it follows that $c_{uv}^w(y;z)=0$ unless $\ell(wv^{-1}\mu_v)=\ell(w)+\ell(v^{-1}\mu_v)$, in which case $c_{uv}^w(y;z)=c_{u,\mu_v}^{wv^{-1}\mu_v}(y;z)$.

Having verified in the two exhaustive cases $\ell(ws_i)<\ell(w)$ and $\ell(ws_i)>\ell(w)$ that the result follows after the increase in length difference from the induction hypothesis, we have the result by induction.
\end{proof}


\begin{definition}[Factorial elementary symmetric polynomials $E_{k}(x;z)$, $E_{p,k}(x;z)$]
Let $E_k(x;z)$ be the factorial elementary symmetric polynomial defined by
$$E_{k}(x;z)=\prod_{i=1}^k (x_i-z_1)$$
This is the double Schubert polynomial $\sch_{c_{k,k}}(x;z)$ where
$$c_{p,k}=s_{k+1-p}s_{k+2-p}\cdots s_k$$
We define more generally 
$$E_{p,k}(x;z)=\sch_{c_{p,k}}(x;z)$$
so that $E_k(x;z)=E_{k,k}(x;z)$. If $p<0$ or $p>k$, we define
$$E_{p,k}(x;z)=0$$
Since exactly one $z$ variable occurs in the product for $E_k(x;z)$, namely $z_1$, we use the notation $E_k(x;z_j)$ for any $j$ to substitute $z_j$ for $z_1$.
\end{definition}

\begin{proposition} \label{proposition:minipieri}
Let $j,k$ be positive integers and suppose $u\in S_\infty$. Then
$$\sch_u(x;y)E_k(x;z_j)=\sum_{u\tom{k} w}\mathrm{weight}_{u,k}^w(y;z_j)\sch_w(x;y)$$
\end{proposition}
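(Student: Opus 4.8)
The plan is to prove Proposition~\ref{proposition:minipieri} by induction, reducing the product $\sch_u(x;y)E_k(x;z_j)$ to smaller products using the skew divided difference recurrence and the characterization of $c_{uv}^w(x;z)=\partial_u^w(\sch_v(x;z))$. First I would recall that $E_k(x;z_j)=\sch_{c_{k,k}}(x;z_j)$ corresponds to the dominant permutation with code $(k,0,0,\ldots)$ — wait, more precisely $c_{k,k}=s_1s_2\cdots s_k$, which has a single descent. Since $c_{k,k}$ has $\code(c_{k,k})$ being a single long row, this is already dominant, so Proposition~\ref{proposition:premain} does not further simplify it. Instead, the natural approach is to peel off one simple reflection at a time from $c_{k,k}$: write $c_{k,k}=c_{k-1,k-1}\cdot(\text{something})$, or better, use that $E_k(x;z_j)=E_{k-1}(x;z_j)(x_k-z_j)$ — no, that is false since $E_{k-1}$ uses $z_1$ in all factors. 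Rather $E_k(x;z_j)=\prod_{i=1}^k(x_i-z_j)$ genuinely, so $E_k(x;z_j)=(x_k-z_j)E_{k-1}(x;z_j)$ as polynomials. The cleanest route is induction on $k$: use the Leibniz-type identity for skew divided difference operators applied to the factorization $E_k(x;z_j)=(x_k-z_j)E_{k-1}(x;z_j)$, but multiplication by the non-symmetric linear factor $x_k-z_j$ is awkward.

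A more robust route, and the one I would actually pursue, is to verify the identity by checking it produces the right coefficients via the divided difference characterization directly, proving by induction on $\ell(w)-\ell(u)$ (equivalently on $k$ minus the number of fixed points). Concretely: the claim is equivalent to $c_{u,c_{k,k}}^w(y;z_j)=\mathrm{weight}_{u,k}^w(y;z_j)$ if $u\tom{k}w$ and $0$ otherwise, where here the weight is interpreted with $z_1\mapsto z_j$. I would set this up using the recurrence for $\partial_u^w$ from the excerpt: pick $i$ with $\ell(w s_i)<\ell(w)$ and split on whether $\ell(us_i)<\ell(u)$ or $\ell(us_i)>\ell(u)$. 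When $\ell(us_i)>\ell(u)$ we get $\partial_u^w=\partial_u^{ws_i}\partial^{s_i}$, and since $\partial^{s_i}(E_k(x;z_j))$ is $E_{k}$ if $i$ is a descent of $c_{k,k}$ (i.e.\ $i=k$) and $0$ otherwise — this is where the structure of $c_{k,k}=s_1\cdots s_k$ matters: its only descent is at position $k$. So when $i\neq k$ and $i\le k-1$ this forces vanishing unless... hmm, actually $E_k(x;z_j)$ is symmetric in $x_1,\ldots,x_k$, so $\partial^{s_i}(E_k)=0$ for $i<k$ and $\partial^{s_k}(E_k)=E_{k-1}(x;z_j)$ — wait, $\partial^{s_k}\prod_{i=1}^k(x_i-z_j)$: the product is symmetric in $x_1,\ldots,x_k$ but not in $x_k,x_{k+1}$, and $\partial^{s_k}$ of it equals $\prod_{i=1}^{k-1}(x_i-z_j)=E_{k-1}(x;z_j)$. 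Good. So the recurrence, combined with $\partial_u^w(E_k)=\partial_u^{ws_i}(\partial^{s_i}E_k)$ when $\ell(us_i)>\ell(u)$, reduces $k$ to $k-1$ when $i=k$, and forces $c_{uv}^w=0$ when $i<k$ and $\ell(ws_i)<\ell(w)$ (by Lemma~\ref{lemma:nodescentzero}, noting $\ell(c_{k,k}s_i)>\ell(c_{k,k})$ for $i<k$ as well — since the only descent of $c_{k,k}$ is $k$).

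The heart of the argument — and the main obstacle — is the case $\ell(us_i)<\ell(u)$ with $\ell(ws_i)<\ell(w)$, where the recurrence gives $\partial_u^w=\partial_u^{ws_i}\partial^{s_i}+\partial_{us_i}^{ws_i}s_i$, so $c_{u,v}^w(x;z_j)=c_{u,vs_i}^{ws_i}(x;z_j)+s_i\bigl(c_{us_i,v}^{ws_i}(x;z_j)\bigr)$ with $v=c_{k,k}$; taking $i=k$ (again the only relevant descent of $v$), $vs_k=c_{k-1,k}$, so $\partial^{s_k}E_k=E_{k-1}(x;z_j)$ as above and I get a recursion that must be matched against the combinatorics of $\tom{k}$. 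Here I need the combinatorial lemma: the set $\{w\mid u\tom{k}w\}$ and the weights $\mathrm{weight}_{u,k}^w$ satisfy exactly this recursion — that is, transpositions $t_{ab}$ with $a\le k<b$ building $w$ from $u$ by length-additive steps with distinct $a$'s decompose according to whether position $k$ is involved, and the contribution $\prod_{i\in P_k(u,w)}(y_i-z_j)$ splits as the $\partial^{s_k}$-term (position $k$ not used, $w(k)=u(k)$ possibly contributing a factor) plus the $s_k$-term (the chain of transpositions starting with one based at $k$). This is essentially Sottile's analysis of the relation $\tom{k}$; the work is to carefully check the bookkeeping of which index is ``fixed below $k$'' survives under the transposition $s_k$ acting on the $y$-variables — but note the $s_k$ acts on $x$-variables, hence trivially on $y$ and $z$, so $s_i(c_{us_i,v}^{ws_i}(x;z_j))$ simply re-evaluates the polynomial; after setting $x=y$ to extract coefficients, $s_k$ swapping $x_k,x_{k+1}$ becomes swapping $y_k,y_{k+1}$, and I must confirm this matches the weight $\mathrm{weight}_{us_k,k}^{ws_k}$ pushed forward correctly. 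I expect this last reconciliation — proving the weight-polynomials obey the same two-term recursion as $\partial_u^w(E_k)$ — to be the crux, and everything else (base case $u=w$, $k=0$; the vanishing cases) to be routine.
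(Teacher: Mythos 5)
The crux of your argument is exactly the piece you leave undone. Saying that the weights $\mathrm{weight}_{u,k}^w(y;z_j)$ ``satisfy exactly this recursion'' and that this reconciliation is ``the crux'' is not a proof sketch of the proposition; it \emph{is} the proposition. The paper devotes the bulk of Section \ref{section:pieriproof} — the cycle characterization of $\tom{k}$ (Lemma \ref{lemma:piericycle}) and the four case lemmas \ref{lemma:pieriknotkn1}, \ref{lemma:pieritogether}, \ref{lemma:pierikn1notk}, \ref{lemma:pieribothnot} — precisely to carry out this kind of bookkeeping (which terms survive, which cancel, and how the sets $P_k$ transform). A reduction of the statement to ``the combinatorics matches the algebra,'' with the matching deferred, leaves the essential content unproved.

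There are also concrete problems with the recursion as you set it up. The recurrence $\partial_u^w=\partial_u^{ws_i}\partial^{s_i}+\partial_{us_i}^{ws_i}s_i$ is indexed by a descent $i$ of $w$, not of $v=c_{k,k}$, so you cannot simply ``take $i=k$'': for a general $w$ you must recurse on descents $i\neq k$, and in the case $\ell(us_i)<\ell(u)$ this produces terms $c_{us_i,c_{k,k}}^{ws_i}$, requiring a separate (unaddressed) combinatorial statement that $u\tom{k}w$ iff $us_i\tom{k}ws_i$ and that $P_k(u,w)=P_k(us_i,ws_i)$, plus an argument that the recursion terminates. Worse, when $i=k$ and $\ell(us_k)<\ell(u)$, the second term is $\partial_{us_k}^{ws_k}\bigl(s_k(E_k(x;z_j))\bigr)$, and $s_k(E_k(x;z_j))=E_{k-1}(x;z_j)(x_{k+1}-z_j)$ is neither $E_k$ nor any double Schubert polynomial; your identity $c_{u,v}^w=c_{u,vs_k}^{ws_k}+s_k\bigl(c_{us_k,v}^{ws_k}\bigr)$ silently moves $s_k$ past the skew operator, which is not valid, since in the composition $\partial_{us_k}^{ws_k}s_k$ the reflection acts on the argument \emph{before} the skew operator is applied. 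Finally, note that the factorization you discarded as awkward, $E_k(x;z_j)=E_{k-1}(x;z_j)(x_k-z_j)$, is exactly how the paper proceeds: multiplication by the single non-symmetric factor $x_k-z_j$ is handled cleanly by the equivariant Chevalley formula (Lemma \ref{lemma:singlevariable}), and the real work is the subsequent cancellation analysis; that is the part your proposal would still have to supply in some form.
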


We postpone the proof of this to Section \ref{section:pieriproof}.


The next lemma, for which we cite \cite[(6.14)]{notes}, expresses the double Schubert polynomial corresponding to a dominant permutation as a product of these factorial elementary symmetric polynomials.

\begin{lemma} \label{lemma:monomial}
Suppose $v\in S_\infty$. Then
$$\sch_{\monom{v}}(x;z)=\prod_{i=1}^\infty E_{\lambda_i(v)}(x;z_i)$$
\end{lemma}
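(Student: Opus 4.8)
The plan is to reduce the statement to a known formula for the double Schubert polynomial of a dominant permutation, namely that $\sch_{\monom v}(x;z) = \prod_{(i,j)} (x_i - z_j)$ where the product runs over the cells $(i,j)$ of the Young diagram of the partition $\code(\monom v)$ (this is the content of \cite[(6.14)]{notes}, which we are permitted to cite). So the real work is purely combinatorial: I must rewrite this product over cells of $\code(\monom v)$ as the product $\prod_{i\geq 1} E_{\lambda_i(v)}(x;z_i)$, using the definition $E_{\lambda_i(v)}(x;z_i) = \prod_{a=1}^{\lambda_i(v)}(x_a - z_i)$ and the fact that $\lambda(v)$ is by definition the conjugate of $\code(\monom v)$.

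First I would set $\nu = \code(\monom v)$, a partition since $\monom v$ is dominant, and recall that $\lambda_i(v) = \#\{j \mid \nu_j \geq i\}$ is the length of the $i$-th column of the Young diagram of $\nu$, i.e. $\lambda(v) = \nu'$ is the conjugate partition. The Macdonald formula gives
\[
\sch_{\monom v}(x;z) = \prod_{j\geq 1}\ \prod_{k=1}^{\nu_j} (x_k - z_j),
\]
the product over all cells $(k,j)$ with $1 \leq k \leq \nu_j$ — here $k$ indexes the row within column $j$ of the diagram, equivalently $(k,j)$ ranges over the cells in row $j$, reading the diagram with $\nu_j$ cells in "row" $j$. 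Wait — I need to be careful about which coordinate the $x$ and which the $z$ attaches to; in the dominant case the standard normalization attaches $x_i$ to the row index and $z_j$ to the column index of the cell $(i,j)$ of $\code(\monom v)$, so the cell $(i,j)$ contributes $x_i - z_j$ and lies in the diagram iff $j \leq \nu_i$. So
\[
\sch_{\monom v}(x;z) = \prod_{i\geq 1}\ \prod_{j=1}^{\nu_i}(x_i - z_j).
\]

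Now I would simply switch the order of the product, grouping by the column index $j$ instead of the row index $i$: a cell $(i,j)$ lies in the diagram iff $j \leq \nu_i$ iff $\nu_i \geq j$ iff $i \leq \#\{i' \mid \nu_{i'} \geq j\} = \lambda_j(v)$ (using that $\nu$ is a partition, so the set of rows $i$ with $\nu_i \geq j$ is an initial segment $\{1,\dots,\lambda_j(v)\}$). Hence
\[
\sch_{\monom v}(x;z) = \prod_{j\geq 1}\ \prod_{i=1}^{\lambda_j(v)}(x_i - z_j) = \prod_{j\geq 1} E_{\lambda_j(v)}(x;z_j),
\]
which is the claim after renaming the index $j$ to $i$. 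The products are finite since $\nu$, hence $\lambda(v)$, has finitely many nonzero parts, and $E_0(x;z_i) = 1$ handles the trailing factors.

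The only genuine obstacle is bookkeeping: pinning down the exact convention in \cite[(6.14)]{notes} for which index of a cell of $\code(\monom v)$ carries the $x$-variable and which carries the $z$-variable, and confirming it is consistent with the convention $\sch_{w_0(n)}(x;y) = \prod_{i+j\leq n+1}(x_i-y_j)$ fixed earlier in the paper (note $w_0(n)$ is dominant with $\code_i(w_0(n)) = n+1-i$, so its diagram is the staircase and the formula indeed reads $x_i - y_j$ for cell $(i,j)$ in the staircase — this cross-check fixes the convention unambiguously). Once that is settled, the argument is the routine conjugation/Fubini manipulation above.
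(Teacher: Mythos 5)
Your proposal is correct and follows essentially the same route as the paper's proof: cite the dominant-permutation formula \cite[(6.14)]{notes}, then group the factors by the $z$-index and observe that the column lengths of the diagram of $\code(\monom v)$ are the parts of the conjugate partition $\lambda(v)$, giving $\prod_j E_{\lambda_j(v)}(x;z_j)$. Your extra cross-check of the $x$/$z$ indexing convention against the staircase $\sch_{w_0(n)}(x;y)$ is a sensible piece of bookkeeping but does not change the argument.
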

\begin{proof}
In \cite[(6.14)]{notes} there is a formula for the double Schubert polynomial corresponding to a dominant permutation $\mu$ in terms of the coordinates of the boxes in the Young diagram of $\code(\mu)$. The formula is
$$\sch_{\monom v}(x;z)=\prod_{(i,j)\in Y_{\code(\monom v)}}(x_i-z_j)$$
If we fix $j$ and consider the terms involving $z_j$ in this product, the terms that occur are $x_i-z_j$ where $1\leq i\leq \lambda_j(v)$, since the length of this column in the diagram is the value at the corresponding index in the conjugate partition of the code. The product of these terms is therefore equal to $E_{\lambda_j(v)}(x;z_j)$. Iterating over $j$, we obtain the result.
\end{proof}

From this we may derive the following. 

\begin{proposition} \label{proposition:mulmonomial}
Suppose $u,v\in S_\infty$. Then
$$\sch_u(x;y)\sch_{\monom{v}}(x;z)=\sum_{w\in S_\infty} d_{u,\lambda(v)}^w(y;z)\sch_w(x;y)$$
\end{proposition}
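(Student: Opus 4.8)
The plan is to iterate Proposition \ref{proposition:minipieri}. By Lemma \ref{lemma:monomial} we have $\sch_{\monom v}(x;z) = \prod_{i=1}^\infty E_{\lambda_i(v)}(x;z_i)$, a finite product since $\lambda(v)$ is a partition with finitely many nonzero parts; write $m$ for its length, so $\sch_{\monom v}(x;z)=E_{\lambda_1(v)}(x;z_1)E_{\lambda_2(v)}(x;z_2)\cdots E_{\lambda_m(v)}(x;z_m)$ (factors with $\lambda_i(v)=0$ contribute $E_0=1$ and can be ignored). First I would multiply $\sch_u(x;y)$ by $E_{\lambda_1(v)}(x;z_1)$ using Proposition \ref{proposition:minipieri} with $k=\lambda_1(v)$ and $j=1$, obtaining $\sum_{u\tom{\lambda_1(v)}u_1}\mathrm{weight}_{u,\lambda_1(v)}^{u_1}(y;z_1)\sch_{u_1}(x;y)$. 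Then I would multiply each resulting $\sch_{u_1}(x;y)$ by $E_{\lambda_2(v)}(x;z_2)$, again by Proposition \ref{proposition:minipieri}, now with $j=2$, and continue this process $m$ times.

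The key point making the iteration clean is that the output of Proposition \ref{proposition:minipieri} is again a $\mathbb{Z}[y]$-linear (indeed, nonnegative-in-$y_i-z_j$) combination of double Schubert polynomials $\sch_w(x;y)$ in the same variable sets, so the hypothesis of Proposition \ref{proposition:minipieri} (an arbitrary permutation times $E_k(x;z_j)$) is met at every stage; the weights produced at step $j$ involve only $z_j$, which is a fresh variable not appearing in any earlier weight, so no collision of $z$-indices occurs. After carrying out all $m$ steps and collecting terms, the coefficient of $\sch_w(x;y)$ is
$$\sum_{\substack{u=u_0\tom{\lambda_1(v)}u_1\tom{\lambda_2(v)}\cdots\tom{\lambda_m(v)}u_m=w}}\ \prod_{i=1}^m\mathrm{weight}_{u_{i-1},\lambda_i(v)}^{u_i}(y;z_i),$$
which is exactly $\sum_{P\in\mathrm{Path}_{\lambda(v)}(u,w)}\mathrm{weight}_{P,\lambda(v)}(y;z)=d_{u,\lambda(v)}^w(y;z)$ by the definitions in Definition \ref{definition:pieri} and the definition of $d_{u,\lambda}^w(y;z)$. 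I would make the induction on $m$ explicit: the inductive hypothesis gives the formula for $\sch_u(x;y)\prod_{i=1}^{m-1}E_{\lambda_i(v)}(x;z_i)$ in terms of paths of length $m-1$, and one more application of Proposition \ref{proposition:minipieri} (with $k=\lambda_m(v)$, $j=m$) appends the final $\tom{\lambda_m(v)}$ step and the final weight factor.

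There is essentially no hard step here — this is a bookkeeping argument, and the genuine content is deferred to Proposition \ref{proposition:minipieri} (proved later) and Lemma \ref{lemma:monomial}. The only mild subtlety I would be careful about is the edge behavior: parts $\lambda_i(v)=0$ give trivial factors $E_0(x;z_i)=1$ and a $\tom{0}$ step, which by the definition of $\tom{k}$ (taking $p=0$, the empty product of transpositions) forces $u_{i-1}=u_i$ and contributes weight $1$ since $P_0(u_{i-1},u_i)=\{u(i')\mid i'\le 0\}=\varnothing$; these are harmless and consistent on both sides, so one may either truncate at $m=\ell(\lambda(v))$ or include them without changing either side. I would state at the start that $\lambda(v)$ has length $m$ and all subsequent products are finite, so that no convergence issue arises.
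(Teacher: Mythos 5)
Your proposal is correct and follows essentially the same route as the paper's proof: factor $\sch_{\monom{v}}(x;z)$ via Lemma \ref{lemma:monomial} and induct on the number of parts of $\lambda(v)$, applying Proposition \ref{proposition:minipieri} one factor $E_{\lambda_p(v)}(x;z_p)$ at a time and identifying the accumulated weights with path weights defining $d_{u,\lambda(v)}^w(y;z)$. The paper's argument is the same bookkeeping induction (it works with the truncations $\lambda^p$ and extends each path by one edge at the inductive step), so nothing further is needed.
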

\begin{proof}
Assume the length of the partition $\lambda(v)$ is $m$. To shorten notation, for an integer $0\leq p\leq m$ define a partition $\lambda^p$ as
$$\lambda^p=(\lambda_1(v),\ldots,\lambda_p(v))$$
We note by Lemma \ref{lemma:monomial} that
$$\sch_{\monom{v}}(x;z)=\prod_{i=1}^m E_{\lambda_i(v)}(x;z_i)$$
We apply Proposition \ref{proposition:minipieri} and induction to successively multiply by the factors of $\sch_{\mu_v}(x;z)$. The statement we will prove is that for $0\leq p\leq m$ we have
$$\sch_u(x;y)E_{\lambda_1(v)}(x;z_1)\cdots E_{\lambda_p(v)}(x;z_p)=\sum_w d_{u,\lambda^p}^w(y;z)\sch_w(x;y)$$
For the base case, multiplying by none of the factors, the product is $\sch_u(x;y)$. Assume the induction hypothesis that for some $p$ with $p-1<m$ we have
$$\sch_u(x;y)E_{\lambda_1(v)}(x;z_1)\cdots E_{\lambda_{p-1}(v)}(x;z_{p-1})=\sum_w d_{u,\lambda^{p-1}}^w(y;z)\sch_w(x;y)$$
We multiply both sides by the additional factor $E_{\lambda_p(v)}(x;z_p)$, applying Proposition \ref{proposition:minipieri}:
$$\sch_u(x;y)E_{\lambda_1(v)}(x;z_1)\cdots E_{\lambda_{p}(v)}(x;z_{p})=\sum_w\sum_{w':w\tom{\lambda_p(v)} w'} d_{u,\lambda^{p-1}}^w(y;z)\mathrm{weight}_{w,\lambda_p(v)}^{w'}(y;z_p)\sch_{w'}(x;y)$$
Interchange the order of summation on the right hand side to obtain that the coefficient of $\sch_{w'}(x;y)$ is, fixing $w'$ and summing over all $w$,
$$\sum_{w:w\tom{\lambda_p(v)} w'}d_{u,\lambda^{p-1}}^w(y;z)\mathrm{weight}_{w,\lambda_p(v)}^{w'}(y;z_p)$$
We claim that this is equal to $d_{u,\lambda^p}^{w'}(y;z)$. Recall that $d_{u,\lambda^{p-1}}^w(y;z)$ is a sum over paths 
$$u=u_0\tom{\lambda_1(v)} u_1\tom{\lambda_2(v)}\cdots\tom{\lambda_{p-1}(v)} u_{p-1}=w$$
of the product
$$\prod_{i=1}^{p-1}\mathrm{weight}_{u_{i-1},\lambda_i(v)}^{u_i}(y;z_i)$$
We are extending each path by an additional edge $w\tom{\lambda_p(v)} w'$. As this ranges over all $w$ (the penultimate element in the path), we obtain exactly the paths that occur in the sum for $d_{u,\lambda^p}^{w'}(y;z)$ with the desired extra factor occurring in the weight, obtaining the result by induction.
\end{proof}

\begin{proof}[Proof of Theorem \ref{theorem:main}]
By Proposition \ref{proposition:premain}, we have under the conditions that $c_{uv}^w(y;z)=0$ unless $\ell(wv^{-1}\mu_v)=\ell(w)+\ell(v^{-1}\mu_v)$, in which case we have
$$c_{uv}^w(y;z)=c_{u,\monom{v}}^{wv^{-1}\monom{v}}(y;z)$$
By Proposition \ref{proposition:mulmonomial} we have that
$$\sch_{u}(x;y)\sch_{\monom{v}}(x;z)=\sum_{w'\in S_\infty}{d_{u,\lambda(v)}^{w'}(y;z)\sch_{w'}(x;y)}$$
Picking out the coefficient of $\sch_{w'}(x;y)$ with $w'=wv^{-1}\monom{v}$ we obtain the result.
\end{proof}

\section{Positive formula for (skew) double Schubert polynomials}

Theorem \ref{theorem:main} yields a nontrivial formula even when $u$ is the identity. In that case, $\partial_u^w=\partial^w$, hence $c_{uv}^w(x;y)=\partial^w(\sch_v(x;y))$. Thus 
$$c_{1,v}^{1}(x;y) = \sch_v(x;y)$$
and hence our formula yields a positive formula for double Schubert polynomials. We record this as a theorem.

\begin{theorem} \label{theorem:schubertformula}
For all $v\in S_\infty$ we have
$$\sch_v(x;y)=e_{1,v}^1(x;y)$$
and hence $\sch_v(x;y)$ is a polynomial in the differences $x_i-y_j$ with nonnegative integer coefficients.
\end{theorem}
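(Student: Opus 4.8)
The plan is to specialize Theorem~\ref{theorem:main} to the case $u=1$ and identify the resulting formula with $\sch_v(x;y)$. The starting observation is that when $u$ is the identity permutation, the hypothesis of Theorem~\ref{theorem:main} is satisfied for \emph{any} $v$: we need a $p>0$ with $\ell(us_i)>\ell(u)$ for all $i<p$ and $\ell(vs_i)>\ell(v)$ for all $i>p$, and since $\ell(1\cdot s_i)=1>0=\ell(1)$ for every $i$, we may simply take $p$ to be larger than every descent of $v$ (such a $p$ exists because $v\in S_\infty$ fixes all but finitely many elements, so $v$ has only finitely many descents). Thus the identity with $y$ replaced by $y$ and $z$ replaced by $y$ qualifies.

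Next I would apply the theorem with the coefficient variables chosen so that the $y$-variables of the second factor equal those of the first. Concretely, Theorem~\ref{theorem:main} gives
$$\sch_1(x;y)\sch_v(x;z)=\sum_{w\in S_\infty}e_{1v}^w(y;z)\sch_w(x;y),$$
and since $\sch_1(x;y)=1$, the left side is just $\sch_v(x;z)$. Setting $z=y$ throughout yields
$$\sch_v(x;y)=\sum_{w\in S_\infty}e_{1v}^w(y;y)\sch_w(x;y).$$
Because the double Schubert polynomials $\{\sch_w(x;y)\mid w\in S_\infty\}$ form a basis of $\mathbb{Z}[x,y]$ over $\mathbb{Z}[y]$, comparing coefficients forces $e_{1v}^v(y;y)=1$ and $e_{1v}^w(y;y)=0$ for $w\neq v$; in particular the single nonvanishing term on the right is $\sch_v(x;y)$ itself, but this is circular as stated. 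To get the genuine combinatorial formula I instead keep $z$ as a free set of variables and use the relation $c_{uv}^w(x;z)=\partial_u^w(\sch_v(x;z))$ recalled in the Preliminaries: with $u=1$ we have $\partial_1^w=\partial^w$, so $c_{1v}^w(x;z)=\partial^w(\sch_v(x;z))$, and by the vanishing formula $\sch_w(x;y)|_{x=y}=\delta_{1,w}$ this extracts coefficients; taking $w=1$ gives $c_{1v}^1(x;z)=\partial^1(\sch_v(x;z))=\sch_v(x;z)$. On the other hand, Theorem~\ref{theorem:main} identifies $c_{1v}^w(y;z)=e_{1v}^w(y;z)$, so $e_{1v}^1(y;z)=\sch_v(y;z)$ as polynomials in $y$ and $z$; renaming $(y,z)$ to $(x,y)$ gives $\sch_v(x;y)=e_{1v}^1(x;y)$.

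Finally I would note that by construction $e_{1v}^1(x;y)=d_{1,\lambda(v)}^{v^{-1}\monom{v}}(x;y)$ (after checking the length condition $\ell(v^{-1}\monom v)=\ell(v^{-1}\monom v)$, which is automatic), and $d_{u,\lambda}^w(y;z)$ is by definition a sum over paths of products of linear terms of the form $y_i-z_j$ with coefficient $1$, hence manifestly a polynomial in the differences $x_i-y_j$ with nonnegative integer coefficients. The only point requiring care is the bookkeeping: verifying that the identity permutation really does satisfy the separated-descents hypothesis for an appropriate $p$ (trivial, as above) and that the length condition defining $e_{1v}^w$ holds for $w=1$, i.e. that $\ell(v^{-1}\monom v)=\ell(1)+\ell(v^{-1}\monom v)$, which is immediate since $\ell(1)=0$. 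I expect no real obstacle here; the theorem does essentially all the work, and this is purely an observation about the $u=1$ specialization together with the standard fact $c_{1v}^1=\sch_v$.
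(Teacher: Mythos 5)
Your proposal is correct and follows essentially the same route as the paper: specialize Theorem \ref{theorem:main} to $u=1$, use $\partial_1^w=\partial^w$ so that $c_{1,v}^1(x;y)=\sch_v(x;y)$, and read off positivity from the definition of $e_{1,v}^1$ as a sum over paths of products of linear differences (the length condition being trivial for $w=1$). The initial detour through the specialization $z=y$, which you rightly discard as circular, is unnecessary but harmless.
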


\begin{example} \label{example:schubpoly}
We use the formula to obtain $\sch_{[1,4,3,2]}(x;y)$. We set $u = 1$, $v=[1,4,3,2]$, and $w=1$.
Then
\begin{align*}
\monom{v}&=[4,3,1,2]\\
\code(\monom v)&=(3,2)\\
\lambda(v)&=(2,2,1)\\
v^{-1}\monom{v}&=[2,3,1]\\
wv^{-1}\monom{v}&=[2,3,1]
\end{align*}
We iterate over the paths to obtain the polynomial.
\begin{center}
\begin{tabular}{cccc}
1&\multicolumn{1}{|c}{\mycircled{1}}&\mycircled{1}&2\\
\cline{4-4}
2&\multicolumn{1}{|c}{\mycircled{2}}&\multicolumn{1}{c|}{3}&3\\
\cline{2-3}
3&3&2&1\\
\multicolumn{4}{c}{\makebox[0pt]{$(x_{1} - y_{1})(x_{1} - y_{2})(x_{2} - y_{1})$}}
\end{tabular}
$\phantom{(x_{1} - y_{1})(x_1-y_2)}$
\begin{tabular}{cccc}
1&\multicolumn{1}{|c}{\mycircled{1}}&2&\mycircled{2}\\
\cline{4-4}
2&\multicolumn{1}{|c}{\mycircled{2}}&\multicolumn{1}{c|}{3}&3\\
\cline{2-3}
3&3&1&1\\
\multicolumn{4}{c}{\makebox[0pt]{$(x_{1} - y_{1})(x_{2} - y_{1})(x_{2} - y_{3})$}}
\end{tabular}
\end{center}

\begin{center}
\begin{tabular}{cccc}
1&\multicolumn{1}{|c}{\mycircled{1}}&\mycircled{1}&2\\
\cline{4-4}
2&\multicolumn{1}{|c}{3}&\multicolumn{1}{c|}{\mycircled{3}}&3\\
\cline{2-3}
3&2&2&1\\
\multicolumn{4}{c}{\makebox[0pt]{$(x_{1} - y_{1})(x_{1} - y_{2})(x_{3} - y_{2})$}}
\end{tabular}
$\phantom{(x_{1} - y_{1})(x_1-y_2)}$
\begin{tabular}{cccc}
1&\multicolumn{1}{|c}{\mycircled{1}}&2&\mycircled{2}\\
\cline{4-4}
2&\multicolumn{1}{|c}{3}&\multicolumn{1}{c|}{\mycircled{3}}&3\\
\cline{2-3}
3&2&1&1\\
\multicolumn{4}{c}{\makebox[0pt]{$(x_{1} - y_{1})(x_{2} - y_{3})(x_{3} - y_{2})$}}
\end{tabular}
\end{center}

\begin{center}
\begin{tabular}{cccc}
1&\multicolumn{1}{|c}{2}&\mycircled{2}&\mycircled{2}\\
\cline{4-4}
2&\multicolumn{1}{|c}{3}&\multicolumn{1}{c|}{\mycircled{3}}&3\\
\cline{2-3}
3&1&1&1\\
\multicolumn{4}{c}{\makebox[0pt]{$(x_{2} - y_{2})(x_{2} - y_{3})(x_{3} - y_{2})$}}
\end{tabular}
\end{center}
\end{example}

Thus
\begin{align*}
\sch_{[1,4,3,2]}(x;y)&=(x_{1} - y_{1})(x_{1} - y_{2})(x_{2} - y_{1})+(x_{1} - y_{1})(x_{2} - y_{1})(x_{2} - y_{3})+(x_{1} - y_{1})(x_{1} - y_{2})(x_{3} - y_{2})\\
&+(x_{1} - y_{1})(x_{2} - y_{3})(x_{3} - y_{2})+(x_{2} - y_{2})(x_{2} - y_{3})(x_{3} - y_{2})
\end{align*}

While our formula for double Schubert polynomials derived in this way is new, it is not the first positive formula for double Schubert polynomials in terms of $x_i-y_j$. For example, see the formula in \cite{knutsonpipe} using the pipe dreams (or equivalently RC-graphs) in \cite{bbrc}. We suspect from empirical evidence, though have not proved, that a modification of our formula for $\sch_v(x;y)$ to computing $c_{1,w_0(n)}^{v^{-1}w_0(n)}(x;y)$ but applying Proposition \ref{proposition:minipieri} in increasing order of degree instead of decreasing order yields the same terms as the definition of pipe dream polynomials with the corresponding RC-graph obtained directly by sorting the columns of our diagram. It would be interesting to establish this correspondence in general. We give an example.

\begin{example}[Illustration of the conjectural relation to RC-graphs] \label{example:rc}
An example RC-graph for the permutation $v=[3,1,4,6,5,2]$ is given in Figure (3.1) in \cite{bbrc}. Namely, the example is
\begin{center}
\begin{tabular}{ccccccc}
&1&2&3&4&5&6\\
1&$+$&$+$&$\cdot$&$\cdot$&$+$&$\cdot$\\
2&$\cdot$&$+$&$\cdot$&$\cdot$&$\cdot$&\\
3&$\cdot$&$+$&$\cdot$&$\cdot$&&\\
4&$\cdot$&$\cdot$&$\cdot$&&&\\
5&$+$&$\cdot$&&&&\\
6&$\cdot$&&&&&
\end{tabular}
\end{center}
The corresponding term in the double Schubert polynomial given by the formula in \cite{knutsonpipe} (each plus sign gives a factor of $x_r-y_c$, where $r$ is the row and $c$ is the column) would be
$$(x_{1} - y_{1})(x_{1} - y_{2})(x_{1} - y_{5})(x_{2} - y_{2})(x_{3} - y_{2})(x_{5} - y_{1})$$
Applying Proposition \ref{proposition:minipieri} to 
$$\sch_{w_0(5)}(x;y)=E_1(x;y_5)E_2(x;y_4)E_3(x;y_3)E_4(x;y_2)E_5(x;y_1)$$
to compute $c_{1,w_0(5)}^{v^{-1}w_0(5)}(x;y)$ we obtain for one of the terms the following diagram, multiplying in increasing order of degree, beginning the path in the diagram from the right:
\begin{center}
\begin{tabular}{cccccc}
4&\mycircled{3}&3&2&\multicolumn{1}{c|}{\mycircled{1}}&1\\
\cline{5-5}
\mycircled{5}&5&4&\multicolumn{1}{c|}{3}&2&2\\
\cline{4-4}
3&\mycircled{2}&\multicolumn{1}{c|}{2}&1&3&3\\
\cline{3-3}
\mycircled{1}&\multicolumn{1}{c|}{\mycircled{1}}&1&4&4&4\\
\cline{2-2}
\multicolumn{1}{c|}{6}&4&5&5&5&5\\
\cline{1-1}
2&6&6&6&6&6
\end{tabular}
\end{center}
This diagram yields the same term as the RC-graph. Sorting the columns in this diagram, we obtain 
\begin{center}
\begin{tabular}{cccccc}
\mycircled{1}&\mycircled{1}&1&1&\multicolumn{1}{c|}{\mycircled{1}}&1\\
\cline{5-5}
2&\mycircled{2}&2&\multicolumn{1}{c|}{2}&2&2\\
\cline{4-4}
3&\mycircled{3}&\multicolumn{1}{c|}{3}&3&3&3\\
\cline{3-3}
4&\multicolumn{1}{c|}{4}&4&4&4&4\\
\cline{2-2}
\multicolumn{1}{c|}{\mycircled{5}}&5&5&5&5&5\\
\cline{1-1}
6&6&6&6&6&6
\end{tabular}
\end{center}
which coincides with the RC-graph. The interested reader may wish to verify that the remaining $14$ diagrams all coincide with other RC-graphs for $v$ in the same way.

While one might initially expect that any formula should involve the same term at some point, this specific term does not occur at all in the formula for $e_{1,v}^1(x;y)$, which uses a different dominant permutation and uses multiplication in decreasing order of degree (with increasing order of indices). Expressions of double Schubert polynomials $\sch_v(x;y)$ for permutations $v$ as polynomials in $x_i-y_j$ are generally not at all unique. This is easily seen in the simple example of the polynomial
$$\sch_{s_2}(x;y)=(x_1-y_1)+(x_2-y_2)=(x_1-y_2)+(x_2-y_1)$$
and in general for the linear double Schubert polynomials in $n$ variables there are $n!$ ways to group the terms.
\end{example}

\hspace{12pt}

Kirillov in \cite{kirillov2007skew} defines skew Schubert polynomials as polynomials of the form $u^{-1}\partial_u^w(\sch_{w_0(n)}(x))$, or equivalently $u^{-1}(c_{u,w_0(n)}^w(x;0))$, and conjectures that they have nonnegative integer coefficients. We note that there are other notions of skew Schubert polynomials, for example in \cite{chen2004skew}, \cite{bsskew}, \cite{lenart2003skew}, and \cite{tamvakis2020tableau}. Using our formulas, we have been able to prove Kirillov's conjecture.

\begin{theorem} \label{theorem:kirpositive}
The skew Schubert polynomials defined by Kirillov in \cite{kirillov2007skew} have nonnegative integer coefficients.
\end{theorem}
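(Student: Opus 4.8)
The plan is to notice that $w_0(n)$ is itself a dominant permutation, so that Kirillov's skew Schubert polynomial is, after the harmless specializations $y\mapsto x$, $z\mapsto 0$ and a relabeling of variables by $u^{-1}$, exactly one of the manifestly positive polynomials $d_{u,\lambda}^w(y;z)$ supplied by Proposition \ref{proposition:mulmonomial}. One cannot simply quote Theorem \ref{theorem:main} here: taking $v=w_0(n)$, whose descents fill up positions $1,\dots,n$, the separated-descent hypothesis would force $u(1)<u(2)<\dots<u(n)$, which is far too restrictive. The point is precisely that $w_0(n)$ needs no dominant approximation, so one skips Proposition \ref{proposition:premain} (the only step requiring a descent condition) and lands directly in the unconditional Proposition \ref{proposition:mulmonomial}.

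First I would record the elementary computation $\code_i(w_0(n))=n+1-i$ for $i\le n+1$ and $\code_i(w_0(n))=0$ for $i>n+1$, so that $\code(w_0(n))$ is a partition, hence $\monom{w_0(n)}=w_0(n)$ and $\lambda(w_0(n))$ is the self-conjugate staircase $(n,n-1,\dots,1)$. Applying Proposition \ref{proposition:mulmonomial} with $v=w_0(n)$ then gives
$$\sch_u(x;y)\,\sch_{w_0(n)}(x;z)=\sum_{w\in S_\infty} d_{u,\lambda(w_0(n))}^w(y;z)\,\sch_w(x;y),$$
whence $c_{u,w_0(n)}^w(y;z)=d_{u,\lambda(w_0(n))}^w(y;z)$.

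Next I would combine this with $c_{uv}^w(x;z)=\partial_u^w(\sch_v(x;z))$ together with the specialization $z=0$: since $\sch_{w_0(n)}(x;0)=\sch_{w_0(n)}(x)$, we get $\partial_u^w(\sch_{w_0(n)}(x))=c_{u,w_0(n)}^w(x;0)=d_{u,\lambda(w_0(n))}^w(x;0)$. By definition $d_{u,\lambda}^w(y;z)$ is a sum over paths of products of factors $y_i-z_j$, each product occurring with coefficient $1$; under $y_i\mapsto x_i$ and $z_j\mapsto 0$ every factor $y_i-z_j$ collapses to the single variable $x_i$, so each path contributes an honest monomial in the $x_i$ and $d_{u,\lambda(w_0(n))}^w(x;0)$ is a polynomial in $x_1,x_2,\dots$ with nonnegative integer coefficients. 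Finally, $u^{-1}$ acts on polynomials by $x_j\mapsto x_{u^{-1}(j)}$, sending monomials to monomials and leaving coefficients unchanged, so $u^{-1}\bigl(c_{u,w_0(n)}^w(x;0)\bigr)$ — Kirillov's skew Schubert polynomial — has nonnegative integer coefficients.

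The main thing to check carefully is not so much an obstacle as a sanity point: that the specialization $z=0$ does not cause cancellation. It does not, because each linear factor $y_i-z_j$ of a path-weight becomes the variable $x_i$ rather than a difference, so the coefficient of any monomial in the specialized polynomial is simply the number of paths producing that monomial — a sum of $+1$'s. Everything else is bookkeeping built on top of Proposition \ref{proposition:mulmonomial}.
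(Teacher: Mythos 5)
Your proposal is correct and takes essentially the same route as the paper: the paper's one-line proof rests on $c_{u,w_0(n)}^w(x;0)=e_{u,w_0(n)}^w(x;0)$ because $w_0(n)$ is dominant, which is precisely your observation that no dominant approximation (Proposition \ref{proposition:premain}) is needed and that Proposition \ref{proposition:mulmonomial} applies directly, giving $c_{u,w_0(n)}^w=d_{u,\lambda(w_0(n))}^w$. Your remaining steps --- that each path weight specializes at $z=0$ to a monomial in the $x_i$ and that the action of $u^{-1}$ merely permutes variables --- are exactly the (implicit) content of the paper's phrase ``hence any permutation of this polynomial has nonnegative integer coefficients.''
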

\begin{proof}
We have that $c_{u,w_0(n)}^w(x;0)=e_{u,w_0(n)}^w(x;0)$ since $w_0(n)$ is dominant, hence any permutation of this polynomial has nonnegative integer coefficients.
\end{proof}

The polynomials $c_{u,w_0(n)}^w(0;-y)$ seem to be closely related to the skew Schubert polynomials in \cite{lenart2003skew} and \cite{bsskew} in that they are nonnegative linear combinations of ordinary Schubert polynomials via Littlewood-Richardson coefficients, and it would be worthwhile to investigate the connection between them, which we will not do here. The ``double skew Schubert polynomials'' $c_{u,w_0(n)}^w(x;-y)$ would then seem to connect Kirillov's skew Schubert polynomials and polynomials similar to Lenart's/Bergeron's/Sottile's.

\begin{example}
We compute, using our positive formula, the skew Schubert polynomial computed in Example 3 of \cite{kirillov2007skew}, up to a permutation. Specifically, the polynomial being computed is $c_{u,w_0(3)}^w(x;0)$, where $u=[3,1,2,4]$ and $w=[4,3,1,2]$.
\begin{center}
\begin{tabular}{cccc}
3 &\multicolumn{1}{|c}{\mycircled{3}}&\mycircled{3}&4 \\
\cline{4-4}
1 &\multicolumn{1}{|c}{\mycircled{1}}&\multicolumn{1}{c|}{4} &3 \\
\cline{3-3}
2 &\multicolumn{1}{|c|}{4} &1 &1 \\
\cline{2-2}
4&2 &2 &2 \\
\multicolumn{4}{c}{\makebox[0pt]{$(x_{1} - y_{1})(x_{3} - y_{1})(x_{3} - y_{2})$}}
\end{tabular}
$\phantom{(x_1-y_1)(x_2-y_1)}$
\begin{tabular}{cccc}
3 &\multicolumn{1}{|c}{\mycircled{3}}&4 &\mycircled{4}\\
\cline{4-4}
1 &\multicolumn{1}{|c}{\mycircled{1}}&\multicolumn{1}{c|}{3} &3 \\
\cline{3-3}
2 &\multicolumn{1}{|c|}{4} &1 &1 \\
\cline{2-2}
4&2 &2 &2 \\
\multicolumn{4}{c}{\makebox[0pt]{$(x_{1} - y_{1})(x_{3} - y_{1})(x_{4} - y_{3})$}}
\end{tabular}
\end{center}
\begin{center}
\begin{tabular}{cccc}
3 &\multicolumn{1}{|c}{4} &\mycircled{4}&\mycircled{4}\\
\cline{4-4}
1 &\multicolumn{1}{|c}{\mycircled{1}}&\multicolumn{1}{c|}{3} &3 \\
\cline{3-3}
2 &\multicolumn{1}{|c|}{3} &1 &1 \\
\cline{2-2}
4&2 &2 &2 \\
\multicolumn{4}{c}{\makebox[0pt]{$(x_{1} - y_{1})(x_{4} - y_{2})(x_{4} - y_{3})$}}
\end{tabular}
\end{center}
Setting $y=0$, we obtain $(x_3^2+x_3x_4+x_4^2)x_1$, which is the desired permutation of the result given in the article, $(x_1^2+x_1x_4+x_4^2)x_2$. 
\end{example}
\section{Equivariant positivity}

The order of multiplication by factorial elementary symmetric polynomials in computation of $e_{uv}^w(y;z)$ may seem arbitrary, and indeed any order will of course give the same overall result. Example \ref{example:rc} illustrates that different choices can result in interesting combinatorial consequences. The particular choice of multiplying in decreasing order of degree (with increasing order of indices) is advantageous in that it gives a positive formula for $c_{uv}^w(y;y)$ for the relevant $u$ and $v$, as we show in this section.

\begin{definition}[Graham-nonnegative]
The negative roots are the linear polynomials of the form $y_j-y_i$ with $j>i$, and the simple negative roots are the polynomials $\alpha_i=y_{i+1}-y_i$. It is easy to see that the negative roots have nonnegative coefficients when expressed in terms of the simple negative roots. A polynomial $P(y)$ is said to be Graham-nonnegative if $P(y)$ can be expressed as a polynomial in the simple negative roots with nonnegative integer coefficients.
\end{definition}

\begin{theorem} \label{theorem:equivpositive}
Let $u,w\in S_\infty$, let $\lambda$ be a partition, and suppose $P\in\mathrm{Path}_\lambda(u,w)$. Then $\mathrm{weight}_{P,\lambda}(y;y)$ is Graham-nonnegative.
\end{theorem}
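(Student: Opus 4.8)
The plan is to fix a path $P = (u_0, u_1, \ldots, u_m) \in \mathrm{Path}_\lambda(u,w)$ and show that each factor $\mathrm{weight}_{u_{i-1},\lambda_i}^{u_i}(y;y)$ is individually Graham-nonnegative, since a product of Graham-nonnegative polynomials is again Graham-nonnegative. By definition this factor equals $\prod_{a \in P_{\lambda_i}(u_{i-1}, u_i)}(y_a - y_i)$, where $P_{\lambda_i}(u_{i-1},u_i) = \{u_{i-1}(r) \mid r \leq \lambda_i \text{ and } u_{i-1}(r) = u_i(r)\}$. So it suffices to prove that every element $a \in P_{\lambda_i}(u_{i-1},u_i)$ satisfies $a \geq i$: then $y_a - y_i$ is either $0$ (if $a = i$) or a genuine negative root $y_a - y_i$ with $a > i$, which is a nonnegative combination of the simple negative roots $\alpha_i, \ldots, \alpha_{a-1}$.

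First I would unpack the relation $u_{i-1} \tom{\lambda_i} u_i$: there is a sequence of transpositions $t_{a_1 b_1}, \ldots, t_{a_p b_p}$ with $a_j \leq \lambda_i < b_j$, distinct lower indices, each step length-increasing, and $u_i = u_{i-1} t_{a_1 b_1} \cdots t_{a_p b_p}$. Since these transpositions only move positions in $\{a_1, \ldots, a_p\} \cup \{b_1, \ldots, b_p\}$, a position $r$ with $u_{i-1}(r) = u_i(r)$ that lies in $\{1, \ldots, \lambda_i\}$ is either untouched by all the transpositions, or is some $a_j$ whose value happened to return. The key structural observation — analogous to Sottile's analysis of the $\tom{k}$ relation — is that for positions $r \le \lambda_i$ fixed by the composite, the value $u_{i-1}(r)$ must satisfy $u_{i-1}(r) \geq i$. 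Intuitively, the length-increasing condition forces, at each successive step $j$, that $u_{i-1}t_{a_1 b_1}\cdots t_{a_{j-1}b_{j-1}}(a_j) < u_{i-1}t_{a_1 b_1}\cdots t_{a_{j-1}b_{j-1}}(b_j)$; combined with the fact that $\lambda$ is a partition so $\lambda_i \le \lambda_{i-1} \le \cdots \le \lambda_1$ and the way earlier $\tom{\lambda_j}$ steps constrain the first $\lambda_i$ entries, small values (those $< i$) get "used up" in the first $i-1$ columns and can no longer appear as fixed entries in the first $\lambda_i$ rows of column $i$. I would make this precise by an induction on $i$, tracking which values among $\{1, \ldots, i-1\}$ can occupy the first $\lambda_i$ positions of $u_{i-1}$ and showing that any such value is necessarily \emph{moved} by the transpositions realizing $u_{i-1} \tom{\lambda_i} u_i$ (because $w = u_m$ has a specific form — it equals $wv^{-1}\monom v$ in the application, but here we only use that it is the endpoint of the path — and the cumulative effect of the remaining steps is constrained), hence cannot lie in $P_{\lambda_i}(u_{i-1}, u_i)$.

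The main obstacle will be pinning down exactly why a fixed entry $u_{i-1}(r) = u_i(r)$ with $r \le \lambda_i$ must have value $\ge i$; this is the one genuinely combinatorial point and I expect it to require a careful bookkeeping argument about the $\tom{k}$ relation rather than anything deep. A clean way to organize it: show by induction on $j$ from $1$ to $m$ that the set of values appearing in positions $1, \ldots, \lambda_j$ of $u_j$ that are $\le j$ is exactly $\{1, \ldots, ?\}$-controlled — more concretely, that in column $j$ (i.e. permutation $u_j$), no value strictly less than $j+1$ can be \emph{carried over unchanged} into column $j+1$ within its first $\lambda_{j+1}$ rows. Equivalently, I would phrase it as: for each $i$, every element of $P_{\lambda_i}(u_{i-1},u_i)$ is at least $i$. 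Once that claim is established, the conclusion is immediate: each factor $y_a - y_i$ with $a \ge i$ is Graham-nonnegative (being $0$ or a negative root), their product over the column is Graham-nonnegative, and the product over all columns giving $\mathrm{weight}_{P,\lambda}(y;y)$ is therefore Graham-nonnegative as well.
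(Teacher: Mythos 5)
There is a genuine gap, and it is fatal to the route you chose. Your entire reduction rests on the claim that every element $a\in P_{\lambda_i}(u_{i-1},u_i)$ satisfies $a\geq i$, so that each column factor is individually Graham-nonnegative. That claim is false: small values can persist unchanged in the top $\lambda_i$ rows arbitrarily far to the right. Concretely, take $\lambda=(2,1)$, $u_0=[1,3,2]$, $u_1=u_0t_{24}=[1,4,2,3]$ and $u_2=u_1$. Then $u_0\tom{2}u_1\tom{1}u_2$ is a legitimate element of $\mathrm{Path}_\lambda(u_0,u_1)$ (the second step uses $p=0$ transpositions, which the definition allows), and in column $2$ the entry $1$ in position $1$ is fixed, so $P_1(u_1,u_2)=\{1\}$ and the column-$2$ factor specializes to $y_1-y_2$, which is \emph{not} Graham-nonnegative. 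The theorem survives for this path only because column $1$ contributes $y_1-z_1\mapsto y_1-y_1=0$, killing the whole product. So the "one genuinely combinatorial point" you defer — that a fixed entry $u_{i-1}(r)=u_i(r)$ with $r\leq\lambda_i$ must have value $\geq i$ — cannot be established by any bookkeeping, because it is simply not true; and your parenthetical appeal to the endpoint $w$ having a special form is unavailable anyway, since the theorem is asserted for arbitrary $u,w$ and arbitrary paths.

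The correct statement, and the one the paper proves (Lemma \ref{lemma:pathequal}), is weaker and global rather than factor-by-factor: call a fixed entry $(a,q)$ with $a\leq\lambda_q$ \emph{negative}, \emph{zero}, or \emph{positive} according as $u_q(a)<q$, $=q$, or $>q$; then whenever a negative fixed entry occurs in some column $k$, there must exist a zero fixed entry in some column $q\leq k$. The proof of this uses only the monotonicity $u_{q-1}(j)\leq u_q(j)$ for $j\leq\lambda_q$ (the observation you correctly extract from the $\tom{k}$ relation, cf.\ Lemma \ref{lemma:nokm1k}) together with an induction on the column index. This yields a dichotomy: either the path has no negative fixed entries, in which case every specialized factor is $y_a-y_j$ with $a\geq j$ and your concluding paragraph applies verbatim, or else a zero fixed entry forces $\mathrm{weight}_{P,\lambda}(y;y)=0$, which is trivially Graham-nonnegative. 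Your proposal is missing exactly this dichotomy: Graham-nonnegativity here is not inherited one column at a time, it is rescued globally by a forced vanishing factor.
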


This requires some setup. 

\begin{definition}[Factors of a path]
Let $\lambda$ be a partition and let $P=(u_0,\ldots,u_m)\in \mathrm{Path}_\lambda(u,w)$. Then if $a,q$ are integers we say that $(a,q)$ is a \emph{factor} of $P$ if $1\leq q\leq m$, $1\leq a\leq \lambda_q$, and 
$$u_{q-1}(a)=u_q(a)$$
This factor is said to be in row $a$ and column $q$. If $(a,q)$ is a factor of $P$, then $(a,q)$ is said to be a \emph{negative factor} if $u_q(a)<q$, a \emph{zero factor} if $u_q(a)=q$, and a \emph{positive factor} if $u_q(a)>q$.
\end{definition}

\begin{lemma} \label{lemma:pathequal}
Let $u,w$ be permutations, let $\lambda$ be a partition, and let $P\in \mathrm{Path}_\lambda(u,w)$.  If $(j,k)$ is a negative factor of $P$, then there exists a zero factor $(a,q)$ of $P$ with $q\leq k$.
\end{lemma}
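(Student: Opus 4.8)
The plan is to analyze what it means for $(j,k)$ to be a negative factor and trace backwards through the path to find a zero factor in an earlier or equal column. Recall that $(j,k)$ being a factor means $u_{k-1}(j) = u_k(j)$ with $j \le \lambda_k$, and being negative means $u_k(j) < k$. Set $c = u_k(j) = u_{k-1}(j)$, so $c < k$. First I would observe that since $u_0 = u$ is a permutation fixing all large values, and each step $u_{i-1} \tom{\lambda_i} u_i$ moves a value from a position $\le \lambda_i$ to a position $> \lambda_i$ (this is the content of the transpositions $t_{a_ib_i}$ having $a_i \le \lambda_i < b_i$), the value $c$ must originally sit somewhere and get shuffled around. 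The key structural fact is that once a value leaves a ``low'' position (index $\le \lambda_i$) in some step, whether it can return depends on the column indices; and crucially the value $c$ appearing in position $j \le \lambda_k$ at columns $k-1$ and $k$, combined with $c < k$, is a tension that forces a zero factor somewhere.

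The main step I would carry out is a careful tracking argument: consider the smallest column $q$ such that $u_q(j') = c$ for some $j' \le \lambda_q$ and such that position $j'$ stays fixed from column $q-1$ to $q$ — or more robustly, track the ``history'' of the value $c$. Since $c < k$ and the path has length $m \ge k$, at ``time'' $q = c$ the relevant horizontal line is below row $\lambda_c$. I expect the pigeonhole-type claim to be: among columns $q$ with $c \le q \le k$, the value $c$ must occupy a position that is $\le \lambda_q$ and fixed across a step, and when $q = c$ precisely this gives $u_q(a) = c = q$, i.e.\ a zero factor $(a,q)$ with $q = c \le k$. To make this rigorous I would prove a monotonicity lemma: if a value $v$ is in a position $\le \lambda_i$ at column $i$, then either it was in a position $\le \lambda_{i-1}$ at column $i-1$ (contributing a factor if the position is unchanged) or it just arrived — but values only move from low to high positions within a single $\tom{\lambda_i}$ step, never high to low, so a value in a low position at column $i$ was in a low position at every earlier column down to where it first appeared, which is column $0$. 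Combined with $u_0(v) = v$ for the relevant range, and the fact that $\lambda$ is a partition (so $\lambda_0 \ge \lambda_1 \ge \cdots$, giving $\lambda_q \ge \lambda_k \ge j$ for $q \le k$), this pins the value $c$ into row... I would identify that the factor in column $c$ must be the zero factor.

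The hard part will be the bookkeeping of exactly which row the value $c$ occupies across columns $c, c+1, \ldots, k$ and verifying that at column $c$ the entry in that row equals $c$, making it genuinely a zero factor rather than merely a factor. The subtlety is that the transpositions within a single step $\tom{\lambda_i}$ can permute the low positions among themselves in complicated ways (they need not be a single transposition), so ``position $\le \lambda_i$'' is preserved as a set-theoretic property of which values are low, but individual values may shift rows. The cleanest route, which I would pursue, is: (1) show the set $\{u_i(1), \ldots, u_i(\lambda_i)\}$ of ``low values at column $i$'' is decreasing-then... actually show $\{1, 2, \ldots, \lambda_i\} \cap \{u_i(1),\ldots,u_i(\lambda_i)\}$ behaves predictably — in fact a value $v \le \lambda_i$ that is low at column $i$ stays low and in fact stays equal to $v$ or smaller positions; (2) since $c \le k$ implies (using $\lambda_c \ge \lambda_k \ge j$, so $c$ could be low) the value $c$ is low at column $c$; (3) at column $c$, by an induction on the path, the value $c$ sits in row $c$ (because values $1, \ldots, \lambda_c$ that remain low occupy rows $1, \ldots$ in a way forced by never moving down), giving $u_c(c) = c$, a zero factor $(c,c)$ with $c \le k$. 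I anticipate step (3)'s precise formulation — the claim that the low values form an initial segment in a controlled position — to be where most of the genuine work lies, and I would isolate it as a separate lemma before proving Lemma~\ref{lemma:pathequal}.
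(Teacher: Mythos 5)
Your proposal rests on two claims that are false, and this is a genuine gap rather than a fixable detail. First, the monotonicity lemma you propose ("values only move from low to high positions within a single $\tom{\lambda_i}$ step, never high to low") has the direction backwards: when a transposition $t_{ab}$ with $a\le\lambda_i<b$ is applied and increases length, the low position $a$ receives the \emph{larger} value, which comes from beyond the cutoff, so values routinely enter low positions from high ones, and a value that is low at column $i$ need not have been low (let alone in the same row) at column $i-1$. Second, and consequently, your step (3) fails: a negative factor with value $c$ does not force a zero factor at $(c,c)$, nor even any zero factor in column $c$. Concretely, take $\lambda=(2,2,2)$ and the path $[1,3,2,4]\tom{2}[1,3,2,4]\tom{2}[2,3,1,4]\tom{2}[2,3,1,4]$, where the middle step is the single transposition $t_{13}$ (allowed since $1\le 2<3$ and the length goes from $1$ to $2$) and the other two steps use no transpositions. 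Here $(1,3)$ is a negative factor with value $c=2$, but column $2$ contains no zero factor at all: position $1$ is not a factor there since $u_1(1)=1\ne 2=u_2(1)$, and the only factor in that column, $(2,2)$, has value $3$. The zero factors of this path are $(1,1)$ (value $1$) and $(2,3)$ (value $3$) -- consistent with the lemma, but not with your plan of locating one at column $c$ by tracking the value $c$.

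The statement is true, but the monotonicity that actually works is position-wise rather than value-wise: for $j\le\lambda_q$ one has $u_{q-1}(j)\le u_q(j)$ (this is Lemma \ref{lemma:nokm1k} applied to each step). The paper's proof inducts on the column index $q$: if no zero factor occurs in columns $\le q$, then every entry above the line in column $q$ is strictly greater than $q$ (using $\lambda_q\le\lambda_{q-1}$, each such entry is at least $q$, and equality would itself produce a zero factor in column $q$); a negative factor in column $k$ contradicts this with $q=k$. Note that the zero factor obtained this way may sit in any column $\le k$ and need not involve the value $c$, exactly as the example above shows. Your value-tracking idea is sound only for $c=1$, since no smaller value exists to displace the value $1$ from a low position; that special case is what makes the heuristic look plausible, but it does not generalize.
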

\begin{proof}
Let $P=(u_0,\ldots,u_m)$. We note that it is a consequence of the definition of $\tom{}$ that for each $1\leq q\leq m$ we have that if $j\leq \lambda_q$ then $u_{q-1}(j)\leq u_q(j)$. We prove by induction on $q$ that if there are no integers $j,k$ with $k\leq q$ such that $(j,k)$ is a zero factor of $P$, then for all factors $(j,k)$ of $P$ with $1\leq k\leq q$ we have that $(j,k)$ is a positive factor of $P$. For $q=1$ this is clear, as in that case any factor is either a positive factor or a zero factor. Otherwise, suppose it holds for all $q'<q$ for some $q>1$. Then if there are no zero factors to the left of column $q$, since $u_{q-1}(j)\geq q$ for all $j\leq \lambda_{q-1}$ by the induction hypothesis we have that $u_q(j)\geq q$ for all $j\leq \lambda_q$. If $u_q(a)=q$ for some $a\leq\lambda_q$ then $u_q(a)=u_{q-1}(a)=q$ and it follows that $(a,q)$ is a zero factor of $P$. Otherwise we have that $u_q(j)>q$ for all $j\leq \lambda_q$, so that there can only be positive factors of $P$ in column $q$ and the result follows by induction.
\end{proof}

\begin{proof}[Proof of Theorem \ref{theorem:equivpositive}]
Let $u,w\in S_\infty$, let $\lambda$ be a partition, and let $P=(u_0,\ldots,u_m)\in\mathrm{Path}_\lambda(u,w)$. We show that $\mathrm{weight}_{P,\lambda}(y;y)$ is Graham-nonnegative. Recall the definition
$$\mathrm{weight}_{P,\lambda}(y;y)=\prod_{i=1}^m\mathrm{weight}_{u_{i-1},\lambda_i}^{u_i}(y;y_i)$$
In this product, for each $j$ the factor $\mathrm{weight}_{u_{j-1},\lambda_j}^{u_j}(y;y_j)$ can only fail to be Graham-nonnegative if there exists at least one negative factor $(i,j)$ of $P$, which would contribute $y_{u_{j}(i)}-y_j$ to the product with $u_j(i)<j$. However, by Lemma \ref{lemma:pathequal}, if there is such a factor then there exists a zero factor $(a,q)$ with $q\leq j$; this contributes $y_q-y_q=0$ to $\mathrm{weight}_{u_{q-1},\lambda_{q}}^{u_{q}}(y;y_{q})$, hence $\mathrm{weight}_{P,\lambda}(y;y)=0$. Therefore if there are any negative factors in the product the entire product vanishes, and it follows that $\mathrm{weight}_{P,\lambda}(y;y)$ is Graham-nonnegative.
\end{proof}

While Theorem \ref{theorem:main} fails to be symmetric in that the roles of $u$ and $v$ cannot be interchanged, in the equivariant case it is. Some general results include a positive formula for multiplying a double Schubert polynomial $\sch_u(x;y)$ by a factorial Schur polynomial $s_\lambda(x;y)$ where the factorial Schur polynomial has at least as many $x$ variables as the double Schubert polynomial, which is an equivariant analog of the formula of Kohnert in \cite{kohnert1997multiplication} (though not in the sense of using the same combinatorial elements). In particular this also gives a Littlewood-Richardson rule for equivariant cohomology of the Grassmannian, and for multiplying pairs of factorial Schur functions that have different numbers of variables, which is an equivariant analog of the main result of \cite{grass}. Most generally, the separated descents case in \cite{kzj} and \cite{Huang_2022} is covered in equivariant cohomology.

\begin{example}[Grassmannian case]
This computes the example in Figure 8 of \cite{knutson2003puzzles}. Let $u=[2,4,1,3]$, let $v=[1,3,2]$, and let $w=[2,4,1,3]$. Then
\begin{align*}
\monom{v}&=[3,1,2]\\
\code(\monom v)&=(2)\\
\lambda(v)&=(1,1)\\
v^{-1}\monom{v}&=[2,1]\\
wv^{-1}\monom{v}&=[4,2,1,3]
\end{align*}
Then $c_{uv}^w(y;y)=y_4-y_1$, as witnessed by the following paths.
\begin{center}
\begin{tabular}{ccc}
2&\multicolumn{1}{|c}{\mycircled{2}}&4\\
\cline{2-3}
4&4&2\\
1&1&1\\
3&3&3\\
\multicolumn{3}{c}{$y_{2} - y_{1}$}
\end{tabular}
\hspace{5pt}
\begin{tabular}{ccc}
2&\multicolumn{1}{|c}{4}&\mycircled{4}\\
\cline{2-3}
4&2&2\\
1&1&1\\
3&3&3\\
\multicolumn{3}{c}{$y_{4} - y_{2}$}
\end{tabular}
\end{center}
The formula in \cite{knutson2003puzzles} also has two terms that combine to yield the $y_4-y_1$, but the weight of the two terms is $y_4-y_3$ and $y_3-y_1$, which are different from the elements of our formula.

Now let $w=[3,4,1,2]$. Then $wv^{-1}\monom{v}=[4,3,1,2]$ and $c_{uv}^w(y;y)=1$, as witnessed by the following path:
\begin{center}
\begin{tabular}{ccc}
2&\multicolumn{1}{|c}{3}&4\\
\cline{2-3}
4&4&3\\
1&1&1\\
3&2&2\\
\multicolumn{3}{c}{1}
\end{tabular}
\end{center}
There is a third term in the expansion of the product, but in \cite{knutson2003puzzles} this is $0$ because it falls outside the cohomology ring of the chosen Grassmannian.
\end{example}

\begin{example}
We multiply a double Schubert polynomial by a factorial Schur polynomial that has more $x$ variables. Namely, we multiply the factorial Schur polynomial $s_{(2,1)}(x_1,x_2,x_3,x_4;y)$, which is equal to $\sch_{[1,2,4,6,3,5]}(x;y)$, by $\sch_{[3,1,4,2]}(x_1,x_2,x_3;y)$. Hence we set $u=[1,2,4,6,3,5]$, $v=[3,1,4,2]$. Then 
\begin{align*}
\monom{v}&=[3,4,1,2]\\
\code(\monom v)&=(2,2)\\
\lambda(v)&=(2,2)\\
v^{-1}\monom{v}&=[1,3,2]
\end{align*}
We write $w$ on top of the diagram to save space.
\begin{center}
\begin{tabular}{ccc}
\multicolumn{3}{c}{\makebox[0pt]{$[3,1,4,6,2,5]$}}\\
1&\multicolumn{1}{|c}{2}&3\\
2&\multicolumn{1}{|c}{4}&\mycircled{4}\\
\cline{2-3}
4&1&1\\
6&6&6\\
3&3&2\\
5&5&5\\
\multicolumn{3}{c}{$y_{4} - y_2$}
\end{tabular}
\hspace{12pt}
\begin{tabular}{ccc}
\multicolumn{3}{c}{\makebox[0pt]{$[3,1,5,6,2,4]$}}\\
1&\multicolumn{1}{|c}{2}&3\\
2&\multicolumn{1}{|c}{4}&5\\
\cline{2-3}
4&1&1\\
6&6&6\\
3&3&2\\
5&5&4\\
\multicolumn{3}{c}{1}
\end{tabular}
\hspace{12pt}
\begin{tabular}{ccc}
\multicolumn{3}{c}{\makebox[0pt]{$[3,1,6,4,2,5]$}}\\
1&\multicolumn{1}{|c}{2}&3\\
2&\multicolumn{1}{|c}{4}&6\\
\cline{2-3}
4&1&1\\
6&6&4\\
3&3&2\\
5&5&5\\
\multicolumn{3}{c}{1}
\end{tabular}
\hspace{12pt}
\begin{tabular}{ccc}
\multicolumn{3}{c}{\makebox[0pt]{$[3,2,4,6,1,5]$}}\\
1&\multicolumn{1}{|c}{2}&3\\
2&\multicolumn{1}{|c}{3}&4\\
\cline{2-3}
4&4&2\\
6&6&6\\
3&1&1\\
5&5&5\\
\multicolumn{3}{c}{1}
\end{tabular}
\hspace{12pt}
\begin{tabular}{ccc}
\multicolumn{3}{c}{\makebox[0pt]{$[4,1,6,2,3,5]$}}\\
1&\multicolumn{1}{|c}{2}&4\\
2&\multicolumn{1}{|c}{4}&6\\
\cline{2-3}
4&1&1\\
6&6&2\\
3&3&3\\
5&5&5\\
\multicolumn{3}{c}{1}
\end{tabular}
\end{center}
For $w=[3,1,4,6,2,5]$ there are two paths, but the weight of one of them is $y_1-z_1$ which specializes to $0$. There are several other paths for other values of $w$ that specialize to $0$ when we set $z=y$ which we do not include in the calculation.

Thus
\begin{align*}
s_{(2,1)}(x_1,x_2,x_3,x_4;y)\sch_{[3,1,4,2]}(x;y)=&(y_4-y_2)\sch_{[3,1,4,6,2,5]}(x;y)+\sch_{[3,1,5,6,2,4]}(x;y)\\
                                                  &+\sch_{[3,1,6,4,2,5]}(x;y)+\sch_{[3,2,4,6,1,5]}(x;y)\\
																									&+\sch_{[4,1,6,2,3,5]}(x;y)
\end{align*}

\begin{example}
We give an example which does not involve factorial Schur functions. Set $u=[1,3,5,4,2]$, $v=[3,1,4,2]$. Then
\begin{align*}
\monom{v}&=[3,4,1,2]\\
\code(\monom v)&=(2,2)\\
\lambda(v)&=(2,2)\\
v^{-1}\monom{v}&=[1,3,2]
\end{align*}
\begin{center}
\begin{tabular}{ccc}
\multicolumn{3}{c}{\makebox[0pt]{$[3,1,5,4,2]$}}\\
1&\multicolumn{1}{|c}{3}&\mycircled{3}\\
3&\multicolumn{1}{|c}{5}&\mycircled{5}\\
\cline{2-3}
5&1&1\\
4&4&4\\
2&2&2\\
\multicolumn{3}{c}{\makebox[0pt]{$(y_{3} - y_{2})(y_{5} - y_{2})$}}
\end{tabular}
$\phantom{(y_5-y_2)}$
\begin{tabular}{ccc}
\multicolumn{3}{c}{\makebox[0pt]{$[3,2,5,4,1]$}}\\
1&\multicolumn{1}{|c}{2}&3\\
3&\multicolumn{1}{|c}{\mycircled{3}}&5\\
\cline{2-3}
5&1&1\\
4&4&4\\
2&2&2\\
\multicolumn{3}{c}{\makebox[0pt]{$y_{3} - y_{1}$}}
\end{tabular}
$\phantom{(y_5-y_2)}$
\begin{tabular}{ccc}
\multicolumn{3}{c}{\makebox[0pt]{$[3,2,5,4,1]$}}\\
1&\multicolumn{1}{|c}{2}&3\\
3&\multicolumn{1}{|c}{5}&\mycircled{5}\\
\cline{2-3}
5&3&2\\
4&4&4\\
2&1&1\\
\multicolumn{3}{c}{\makebox[0pt]{$y_{5} - y_{2}$}}
\end{tabular}
\end{center}
$$(y_3-y_2)(y_5-y_2)\sch_{[3,1,5,4,2]}(x;y)+((y_3-y_1)+(y_5-y_2))\sch_{[3,2,5,4,1]}(x;y)$$
\begin{center}
\begin{tabular}{ccc}
\multicolumn{3}{c}{\makebox[0pt]{$[3,4,5,1,2]$}}\\
1&\multicolumn{1}{|c}{3}&\mycircled{3}\\
3&\multicolumn{1}{|c}{4}&5\\
\cline{2-3}
5&5&4\\
4&1&1\\
2&2&2\\
\multicolumn{3}{c}{\makebox[0pt]{$y_{3} - y_{2}$}}
\end{tabular}
\phantom{$(y_5-y_2)$}
\begin{tabular}{ccc}
\multicolumn{3}{c}{\makebox[0pt]{$[4,1,5,3,2]$}}\\
1&\multicolumn{1}{|c}{3}&4\\
3&\multicolumn{1}{|c}{5}&\mycircled{5}\\
\cline{2-3}
5&1&1\\
4&4&3\\
2&2&2\\
\multicolumn{3}{c}{\makebox[0pt]{$y_{5} - y_{2}$}}
\end{tabular}
\phantom{$(y_5-y_2)$}
\begin{tabular}{ccc}
\multicolumn{3}{c}{\makebox[0pt]{$[3,1,6,4,2,5]$}}\\
1&\multicolumn{1}{|c}{3}&\mycircled{3}\\
3&\multicolumn{1}{|c}{5}&6\\
\cline{2-3}
5&1&1\\
4&4&4\\
2&2&2\\
6&6&5\\
\multicolumn{3}{c}{\makebox[0pt]{$y_{3} - y_{2}$}}
\end{tabular}
\end{center}
$$(y_3-y_2)\sch_{[3,4,5,1,2]}(x;y)+(y_5-y_2)\sch_{[4,1,5,3,2]}(x;y)+(y_3-y_2)\sch_{[3,1,6,4,2,5]}(x;y)$$
\begin{center}
\begin{tabular}{ccc}
\multicolumn{3}{c}{\makebox[0pt]{$[3,4,5,2,1]$}}\\
1&\multicolumn{1}{|c}{2}&3\\
3&\multicolumn{1}{|c}{4}&5\\
\cline{2-3}
5&5&4\\
4&3&2\\
2&1&1\\
\multicolumn{3}{c}{\makebox[0pt]{$1$}}
\end{tabular}
\phantom{$(y_5-y_2)$}
\begin{tabular}{ccc}
\multicolumn{3}{c}{\makebox[0pt]{$[4,2,5,3,1]$}}\\
1&\multicolumn{1}{|c}{2}&4\\
3&\multicolumn{1}{|c}{4}&5\\
\cline{2-3}
5&5&2\\
4&3&3\\
2&1&1\\
\multicolumn{3}{c}{\makebox[0pt]{$1$}}
\end{tabular}
\phantom{$(y_5-y_2)$}
\begin{tabular}{ccc}
\multicolumn{3}{c}{\makebox[0pt]{$[4,3,5,1,2]$}}\\
1&\multicolumn{1}{|c}{3}&4\\
3&\multicolumn{1}{|c}{4}&5\\
\cline{2-3}
5&5&3\\
4&1&1\\
2&2&2\\
\multicolumn{3}{c}{\makebox[0pt]{$1$}}
\end{tabular}
\end{center}
$$\sch_{[3,4,5,2,1]}(x;y)+\sch_{[4,2,5,3,1]}(x;y)+\sch_{[4,3,5,1,2]}(x;y)$$
\begin{center}
\begin{tabular}{ccc}
\multicolumn{3}{c}{\makebox[0pt]{$[3,2,6,4,1,5]$}}\\
1&\multicolumn{1}{|c}{2}&3\\
3&\multicolumn{1}{|c}{5}&6\\
\cline{2-3}
5&3&2\\
4&4&4\\
2&1&1\\
6&6&5\\
\multicolumn{3}{c}{\makebox[0pt]{$1$}}
\end{tabular}
\phantom{$(y_5-y_2)$}
\begin{tabular}{ccc}
\multicolumn{3}{c}{\makebox[0pt]{$[4,1,6,3,2,5]$}}\\
1&\multicolumn{1}{|c}{3}&4\\
3&\multicolumn{1}{|c}{5}&6\\
\cline{2-3}
5&1&1\\
4&4&3\\
2&2&2\\
6&6&5\\
\multicolumn{3}{c}{\makebox[0pt]{$1$}}
\end{tabular}
\end{center}
$$\sch_{[3,2,6,4,1,5]}(x;y)+\sch_{[4,1,6,3,2,5]}(x;y)$$


\end{example}
\end{example}

\section{The Pieri formula} \label{section:pieriproof}

\begin{definition}
Suppose $A$ is a finite set of positive integers, and suppose the distinct elements of $A$ are $a_1,\ldots,a_m$ in increasing order. If $P(y;z)$ is a polynomial in $y$ and $z$, we write $P(y_A;z)$ to mean
$$P(y_{a_1},y_{a_2},\ldots,y_{a_m};z)$$
The case where $m$ is smaller than the actual number of $y$ variables occurring in $P(y;z)$ will not happen in the exposition, and as such we do not define the notation in this case.
\end{definition}

\begin{theorem}[The Pieri formula] \label{theorem:pieri}
Let $u\in S_\infty$ and $p\leq k$. Then
$$\sch_u(x;y)E_{p,k}(x;z)=\sum_{u\tom{k} w}E_{p-\ell(u,w),k-\ell(u,w)}(y_{P_k(u,w)};z)\sch_w(x;y)$$
\end{theorem}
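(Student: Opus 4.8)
The plan is to prove the sharper pointwise statement
$$\partial_u^w\bigl(E_{p,k}(x;z)\bigr)=\begin{cases}E_{p-\ell(u,w),\,k-\ell(u,w)}\bigl(x_{P_k(u,w)};z\bigr)&\text{if }u\tom{k}w,\\ 0&\text{otherwise,}\end{cases}$$
which is equivalent to Theorem \ref{theorem:pieri} because $c^w_{u,c_{p,k}}(x;z)=\partial_u^w(\sch_{c_{p,k}}(x;z))=\partial_u^w(E_{p,k}(x;z))$ and renaming $x$ to $y$ recovers the asserted coefficient of $\sch_w(x;y)$. The induction is on $\ell(w)$, driven by the recurrence for skew divided difference operators. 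Two features of the cycle $c_{p,k}=s_{k-p+1}\cdots s_k$ make this workable: it is Grassmannian with its unique descent at position $k$, so that $\partial^{s_i}(E_{p,k})=0$ for $i\neq k$ while $\partial^{s_k}(E_{p,k})=\sch_{c_{p,k}s_k}=E_{p-1,k-1}$; and, using $s_i=1+(x_{i+1}-x_i)\partial^{s_i}$ in the nil-Hecke ring, $s_i(E_{p,k})=E_{p,k}$ for $i\neq k$ while $s_k(E_{p,k})=E_{p,k}+(x_{k+1}-x_k)E_{p-1,k-1}$.

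For the base case $w=1$ we have $\partial_u^1=\delta_{u,1}$, and $u\tom{k}1$ forces $u=1$ with no transpositions, so both sides equal $E_{p,k}(x;z)$. For the inductive step I would choose a descent $i$ of $w$, preferring $i\neq k$ when possible, and apply $\partial_u^w=\partial_u^{ws_i}\partial^{s_i}$ (with the extra summand $\partial_{us_i}^{ws_i}s_i$ when $i$ is also a descent of $u$), substituting the identities above. When $i\neq k$ the $\partial^{s_i}(E_{p,k})$-term vanishes, leaving either $0$ or $\partial_{us_i}^{ws_i}(E_{p,k})=c^{ws_i}_{us_i,c_{p,k}}(x;z)$; when $i=k$ is an ascent of $u$ we get $\partial_u^{ws_k}(E_{p-1,k-1})=c^{ws_k}_{u,c_{p-1,k-1}}(x;z)$. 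In each of these the induction hypothesis applies (for smaller $\ell(w)$, or smaller $(p,k)$), and it remains to match the resulting closed form with the one for $(u,w,p,k)$. This matching rests on the combinatorics of $\tom{k}$, essentially as in Sottile's and Robinson's treatments, and I would isolate it as lemmas: for $i\neq k$ one has $u\tom{k}w\iff us_i\tom{k}ws_i$, with $P_k$, $\ell(u,-)$, and hence the weight unchanged; if $i<k$ is a descent of $w$ but an ascent of $u$ then there is no $w$ with $u\tom{k}w$, so the vanishing is consistent; and if $k$ is a descent of $w$ but an ascent of $u$ then $u\tom{k}w\iff u\tom{k-1}ws_k$ with $\ell(u,ws_k)=\ell(u,w)-1$ and $P_{k-1}(u,ws_k)=P_k(u,w)$, so the two closed forms agree.

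The main obstacle is the one remaining configuration: $w$ has its unique descent at $k$ (so $w$ is Grassmannian with descent $k$) \emph{and} $k$ is also a descent of $u$, so that the recurrence must be applied at $i=k$ and produces
$$c^w_{u,c_{p,k}}(x;z)=\partial_u^{ws_k}\bigl(E_{p-1,k-1}(x;z)\bigr)+\partial_{us_k}^{ws_k}\bigl(E_{p,k}(x;z)\bigr)+\partial_{us_k}^{ws_k}\Bigl((x_{k+1}-x_k)\,E_{p-1,k-1}(x;z)\Bigr),$$
whose first two terms reduce by induction but whose third correction term is not itself a structure constant. I see two routes. One is to observe that the last two terms combine to $\partial_{us_k}^{ws_k}(s_k(E_{p,k}))$, i.e. $\partial_{us_k}^{ws_k}$ applied to the polynomial obtained from $E_{p,k}(x;z)$ by the substitution $x_k\mapsto x_{k+1}$, and to expand $E_{p,k}$ by the explicit column formula for factorial elementary symmetric polynomials (the specialization of Molev--Sagan's rule, Proposition \ref{proposition:elemformula}), matching monomials directly. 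The route I would try first is to sidestep this case using the Pascal-type recursion $E_{p,k}(x;z)=E_{p,k-1}(x;z)+(x_k-z_{k-p+1})E_{p-1,k-1}(x;z)$: then $\sch_u(x;y)E_{p,k}(x;z)=\sch_u(x;y)E_{p,k-1}(x;z)+\bigl(\sch_u(x;y)E_{p-1,k-1}(x;z)\bigr)(x_k-z_{k-p+1})$, where the two products on the right are known by induction on $k$ with base cases $k=p$ (Proposition \ref{proposition:minipieri}) and $p=0$ (trivial), and multiplication by $x_k-z_{k-p+1}=(x_k-y_k)+(y_k-z_{k-p+1})$ is handled by the equivariant Monk/Chevalley rule for $x_k-y_k$ together with the scalar $y_k-z_{k-p+1}$. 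Either way, the work reduces to a purely combinatorial reconciliation of the relation $\tom{k}$ with the data for $\tom{k-1}$ (and a Monk move), separating the $\tom{k}$-moves that "use column $k$" from those that do not; I expect this bookkeeping, rather than any algebraic step, to be the hard part.
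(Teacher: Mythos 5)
Your proposal does not reach a complete proof, and it misses the step that makes the paper's derivation of Theorem \ref{theorem:pieri} short. Granting Proposition \ref{proposition:minipieri} (as you do, and as the paper does), the remaining work in the paper is purely algebraic and needs no new combinatorics: since Proposition \ref{proposition:minipieri} is an identity in the single parameter $z_j$, expanding $E_k(x;z_1)=\sum_{i=0}^k e_{i,k}(x)(-z_1)^{k-i}$ and comparing coefficients of powers of $-z_1$ extracts the rule $\sch_u(x;y)e_{i,k}(x)=\sum_{u\tom{k}w}e_{i-\ell(u,w),k-\ell(u,w)}(y_{P_k(u,w)})\sch_w(x;y)$ for every ordinary elementary symmetric polynomial simultaneously; the Cauchy-formula expansion $E_{p,k}(x;z)=\sum_{i=0}^p e_{i,k}(x)h_{p-i,k+1-p}(-z)$, whose coefficients lie in $\mathbb{Z}[z]$ and are therefore scalars for the basis $\{\sch_w(x;y)\}$, then reassembles these identities into exactly the weight $E_{p-\ell(u,w),k-\ell(u,w)}(y_{P_k(u,w)};z)$. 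That specialization-and-reassembly argument is the entire content of the paper's proof.

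By contrast, both of your routes stop exactly where the real work would begin. In the skew-divided-difference induction you correctly isolate the problematic configuration (descent of both $u$ and $w$ at $k$), but the correction term $\partial_{us_k}^{ws_k}\bigl((x_{k+1}-x_k)E_{p-1,k-1}(x;z)\bigr)$ is left unresolved; ``expand by the column formula of Proposition \ref{proposition:elemformula} and match monomials'' is a hope, not an argument, and this is precisely where the naive induction breaks. In the Pascal-recursion route (the identity $E_{p,k}=E_{p,k-1}+(x_k-z_{k+1-p})E_{p-1,k-1}$ is correct), after applying Lemma \ref{lemma:singlevariable} you must show that the diagonal terms, the signed Monk terms, and the two $\tom{k-1}$-expansions recombine into a sum indexed by $u\tom{k}w$ with weights obeying the matching Pascal identity relating $P_{k-1}$ to $P_k(u,w)$; that cancellation analysis is exactly the kind of case-by-case study the paper carries out in Lemmas \ref{lemma:pieriknotkn1}--\ref{lemma:pieribothnot} and in the proof of Proposition \ref{proposition:minipieri}, now with factorial elementary weights in place of single linear factors, and you have not supplied it. As written, the proposal is a plausible program rather than a proof; the missing idea is that once Proposition \ref{proposition:minipieri} is in hand, no further combinatorial bookkeeping is needed at all.
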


\begin{example} \label{example:pieriproduct}
Let $u=[4,3,5,1,2]$, let $p=3$, and let $k=4$. Then
\begin{align*}
\sch_u(x;y)E_{p,k}(x;z)&=E_{3,4}(y_1,y_3,y_4,y_5;z)\sch_{[4,3,5,1,2]}(x;y)\\
											&+E_{2,3}(y_3,y_4,y_5;z)\sch_{[4,3,5,2,1]}(x;y)\\
                       &+E_{2,3}(y_1,y_3,y_4;z)\sch_{[4,3,6,1,2,5]}(x;y)\\
											 &+E_{1,2}(y_3,y_4;z)\sch_{[4,3,6,2,1,5]}(x;y)\\
											 &+E_{1,2}(y_1,y_4;z)\sch_{[4,5,6,1,2,3]}(x;y)\\
											 &+E_{1,2}(y_1,y_3;z)\sch_{[5,3,6,1,2,4]}(x;y)\\
											 &+\sch_{[4,5,6,2,1,3]}(x;y)+\sch_{[5,3,6,2,1,4]}(x;y)+\sch_{[5,4,6,1,2,3]}(x;y)
\end{align*}

\end{example}

The Pieri formula will be derived from Proposition \ref{proposition:minipieri}, the proof of which will require extensive setup. First, we give a formula for multiplication by $x_i-z_j$ for general $i,j$.

\begin{lemma} \label{lemma:singlevariable}
Let $u\in S_\infty$ and let $i,j\geq 1$. Then
$$(x_i-z_j)\sch_u(x;y)=(y_{u(i)}-z_j)\sch_u(x;y)+\sum_{\substack{q\neq i\\\ell(ut_{iq})=\ell(u)+1}}\mathrm{sign}(q-i)\sch_{ut_{iq}}(x;y)$$
\end{lemma}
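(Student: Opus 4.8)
The plan is to prove Lemma \ref{lemma:singlevariable} by reducing to the known formula for multiplication by a single variable (the Monk/Chevalley-type formula for double Schubert polynomials) and then rearranging terms. Concretely, I would first recall that for double Schubert polynomials there is a transition formula: multiplying $\sch_u(x;y)$ by $x_i$ produces a signed sum of $\sch_{ut_{iq}}(x;y)$ over transpositions $t_{iq}$ with $\ell(ut_{iq})=\ell(u)+1$, with signs $\mathrm{sign}(q-i)$, plus a diagonal term. Since $\sch_u(y;y)=\delta_{1,u}$ and more generally the coefficient extraction via $\partial^w$ followed by $x=y$ is available from the Preliminaries, the cleanest route is to verify the claimed identity by applying $\partial^w$ to both sides and specializing $x=y$, i.e. checking that both sides have the same coefficient $c^w$ when expanded in the $\sch_w(x;y)$ basis over $\mathbb{Z}[y,z]$.

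The key steps, in order: (1) Establish (or cite, if it counts as ``stated earlier'' — it is not, so I would prove it) the base case $j$ with $z_j$ replaced by a generic scalar, reducing to multiplication by $x_i$ alone; the point is that $(x_i - z_j)\sch_u = x_i\sch_u - z_j\sch_u$ and $z_j$ is a scalar in $\mathbb{Z}[y,z]$, so it suffices to handle $x_i \sch_u(x;y)$. (2) Prove the formula $x_i\sch_u(x;y) = y_{u(i)}\sch_u(x;y) + \sum_{q\ne i,\ \ell(ut_{iq})=\ell(u)+1}\mathrm{sign}(q-i)\,\sch_{ut_{iq}}(x;y)$. I would do this by induction on $i$: for $i$ large enough that $\sch_u$ is symmetric in $x_i,x_{i+1}$ (so $\ell(us_i)>\ell(u)$), one can use the nil-Hecke relation $s_i = 1 + (x_{i+1}-x_i)\partial^{s_i}$ together with $\partial^{s_i}\sch_u = 0$ to slide the multiplication operator $x_i \mapsto x_{i+1}$ at the cost of the divided-difference correction, exactly as in the standard derivation of Monk's formula; alternatively, induct downward from the known leading case or use the divided-difference recursion $\sch_u = \partial^{s_i}\sch_{us_i}$ when $\ell(us_i)>\ell(u)$ combined with the Leibniz rule \eqref{equation:leibniz} applied to $x_i \cdot \sch_{us_i}$. (3) Combine: subtract $z_j\sch_u$, collect the diagonal terms $(y_{u(i)}-z_j)\sch_u$, and observe the off-diagonal terms are unaffected by the $z_j$ subtraction.

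I expect the main obstacle to be step (2): giving a self-contained, careful proof of the double-Schubert Monk formula with the correct signs $\mathrm{sign}(q-i)$ and the correct diagonal coefficient $y_{u(i)}$. The sign bookkeeping is the delicate part — one must track whether $q<i$ or $q>i$ and confirm $\ell(ut_{iq})=\ell(u)+1$ is exactly the right length condition (as opposed to chains of length $1$ in Bruhat order through a reflection hyperplane), and confirm that no $\ell(u)-1$ terms survive. The induction that slides $x_i$ past a symmetric pair via $s_i=1+(x_{i+1}-x_i)\partial^{s_i}$ needs the base case where $i$ exceeds the support of $u$, where $\sch_u$ does not involve $x_i$ at all and $u(i)=i$, making the formula read $x_i\sch_u = y_i\sch_u + (\text{terms from }t_{iq})$; checking that this base case and each inductive step preserve the asserted shape — especially that applying $\partial^{s_i}$ to $\sch_{ut_{iq}}$ either kills a term or shifts an index consistently with the recursion on $i$ — is where the real work lies. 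Everything else is routine linearity over $\mathbb{Z}[y,z]$.
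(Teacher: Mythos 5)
Your route is genuinely different from the paper's. The paper does not reprove a Monk-type rule from scratch: it extracts the coefficient of $\sch_u(x;y)$ from the skew divided difference identity $\partial_u^u=u$, so that this coefficient is $\partial_u^u(x_i-z_j)|_{x=y}=y_{u(i)}-z_j$, and it obtains the constant coefficients by writing $x_i-z_j=\sch_{s_i}(x;y)-\sch_{s_{i-1}}(x;y)+(y_i-z_j)$ and invoking the equivariant Chevalley formula for $\sch_u(x;y)\sch_{s_i}(x;y)$ and $\sch_u(x;y)\sch_{s_{i-1}}(x;y)$; the terms $\sch_{ut_{ab}}(x;y)$ with $i\notin\{a,b\}$ occur in both Chevalley expansions and cancel, leaving exactly the signed sum over the $t_{iq}$. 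The paper's proof is therefore a few lines at the price of quoting the Chevalley rule as known, whereas your plan --- reduce to $x_i\sch_u(x;y)$ and prove the single-variable Monk/transition formula directly by divided-difference induction --- is more self-contained in spirit, and your steps (1) and (3) (absorbing the scalar $z_j$ and recombining the diagonal term) are indeed routine.

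The problem is that your step (2) is the entire content of the lemma, and the induction you sketch does not close as stated. The sliding move via $s_i=1+(x_{i+1}-x_i)\partial^{s_i}$ requires $\partial^{s_i}(\sch_u(x;y))=0$, i.e.\ that $i$ is not a descent of $u$; when $u(i)>u(i+1)$ that relation between $x_i\sch_u(x;y)$ and $x_{i+1}\sch_u(x;y)$ is unavailable, so an induction on $i$ alone cannot reach all pairs $(u,i)$. One must also induct on $u$ (say on $\ell(u)$, applying $\partial^{s_r}$ together with the Leibniz rule \eqref{equation:leibniz} to the identity for a longer permutation) and carry out the case analysis of how the set $\{q:\ell(ut_{iq})=\ell(u)+1\}$, the signs, and the diagonal value $u(i)$ transform under such a step; that bookkeeping, which you explicitly defer (``exactly as in the standard derivation of Monk's formula''), is where the whole proof lives. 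Your base case is also not free: for $i$ beyond the support of $u$ the claimed identity amounts to $\sch_{us_i}(x;y)-\sch_{us_{i-1}}(x;y)=(x_i-y_i)\sch_u(x;y)$, which needs its own argument. Finally, your opening suggestion to ``apply $\partial^w$ and set $x=y$'' only becomes a proof if you can evaluate $\partial_u^w(x_i-z_j)|_{x=y}$ independently --- that is precisely the computation the paper performs via $\partial_u^u=u$ and the Chevalley formula, and your plan never returns to it. In short: a viable and genuinely different strategy, but the decisive step is asserted rather than proved.
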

\begin{proof}
It can be shown by the recurrence relation for $\partial_u^w$ that $\partial_u^u=u$. Thus we have
$$\partial_u^u(x_i-z_j)=x_{u(i)}-z_j$$
Substituting $y$ for $x$, we obtain that the coefficient of $\sch_u(x;y)$ is $y_{u(i)}-z_j$.

To compute the constant coefficients we use the equivariant Chevalley formula
$$\sch_u(x;y)\sch_{s_i}(x;y)=(\sch_{s_i}(u(x);y))|_{x=y}\sch_u(x;y)+\sum_{\substack{u\tom{i} u'\\\ell(u')=\ell(u)+1}}\sch_{u'}(x;y)$$
We have that
$$x_i-z_j=\sch_{s_i}(x;y)-\sch_{s_{i-1}}(x;y)+y_i-z_j$$
with the negative term $\sch_{s_{i-1}}(x;y)$ excluded if $i=1$. Thus the terms occurring with constant coefficients in the expansion of $(x_i-z_j)\sch_u(x;y)$ are
$$\sum_{\substack{u\tom{i} u'\\\ell(u,u')=1}}\sch_{u'}(x;y)-\sum_{\substack{u\tom{i-1}u''\\\ell(u,u'')=1}}\sch_{u''}(x;y)$$
The overlap in the two sums is when $u'=u''=ut_{qk}$ where $i\notin \{q,k\}$, which cancel, and what remains is the negative terms $ut_{qi}$ where $q<i$ and the positive terms $ut_{iq}$ where $q>i$. Therefore,
$$\partial_{u}^{ut_{qk}}(x_i-z_j)=0$$
if $i\notin \{q,k\}$, and if $q\neq i$ then
$$\partial_u^{ut_{iq}}(x_i-z_j)=\mathrm{sign}(q-i)$$
The result follows.
\end{proof}


\newcommand{\ntom}[1]{\centernot{\tom{#1}}}

\begin{definition}[Cycles, cycle length, disjoint cycles]
Recall that a \emph{cycle} or \emph{cyclic permutation} is a permutation $c$ written with alternative notation (not window notation) as $c=(a_1,\ldots,a_m)$ for positive integers $a_i$ such that $a_i\neq a_j$ if $i\neq j$ characterized by the fact that $c(a_i)=a_{i+1}$ for all $1\leq i\leq m$, with the index wrapping around so that $a_{m+1}=a_1$, and $c(j)=j$ if $j\notin \{a_1,\ldots,a_m\}$. Cycles $c$ have a \emph{cycle length} which is equal to the value $m$, most often distinct from their length $\ell(c)$ as a permutation. A cycle of cycle length $m$ is called an $m$-cycle. Note that $t_{ab}=(a,b)$ is a $2$-cycle. We refer to the ``elements'' of the cycle $c$ as the integers $a_i$ and say that $c$ \emph{contains} $a_i$ as though it were a set.

For an $m$-cycle $(a_1,\ldots,a_m)$ with $m>2$ we have for any $1<j<m$ that
$$(a_1,\ldots,a_m)=(a_1,\ldots,a_j)(a_j,\ldots,a_m)$$
and we also have that
$$(a_1,\ldots,a_m)=(a_m,a_1,\ldots,a_{m-1})$$
hence the cycle is unchanged by cyclic permutation of the indices in its representation in this form. Two cycles $c_1=(a_1,\ldots,a_p)$ and $c_2=(b_1,\ldots,b_q)$ are said to be \emph{disjoint} if $a_i\neq b_j$ for all $i,j$. It is a basic fact proved in an undergraduate abstract algebra course that every permutation has a unique decomposition as a product of pairwise disjoint cycles (up to commutation).
\end{definition}

We present here in Lemma \ref{lemma:piericycle} an alternative characterization of the Pieri relation $\tom{k}$ in terms of cycles that is used as the definition in \cite{robinsonpieri}. Satisfaction of these conditions is usually easier to check than Definition \ref{definition:pieri} since it is a characterization in terms of the unique disjoint cycle decomposition of $u^{-1}w$. We note, and leave the proof to the reader, that in the Pieri relation $u\tom{k} w$ if we have a sequence of reflections $t_{a_1b_1}$, $\ldots$, $t_{a_pb_p}$ realizing this then we may assume by rearranging the reflections that $b_i\leq b_j$ whenever $i\leq j$.

\begin{lemma} \label{lemma:piericycle}
Suppose $u,w\in S_\infty$ and let $k>0$ be an integer. Then $u\tom{k} w$ if and only if there exist pairwise disjoint cyclic permutations $c_1,\ldots,c_n$ of respective cycle lengths $p_1+1,\ldots,p_n+1$ such that
$$w=uc_1\cdots c_n$$
and
$$\ell(u,w)=p_1+\cdots+p_n$$
such that for each $i$ there exist pairwise distinct positive integers $a_1,\ldots,a_{p_i}$ and $b$ (different for each $i$) such that we may write the cycle $c_i$ as
$$c_i=(a_{p_i},a_{p_i-1},\ldots, a_1,b)$$
where $a_j\leq k$ for all $1\leq j\leq p_i$, $b>k$, and
$$u(a_{p_i})<\cdots<u(a_1)<u(b)$$
\end{lemma}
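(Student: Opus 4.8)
The plan is to prove both implications directly, translating the transposition-based Definition \ref{definition:pieri} into the cycle language by bookkeeping how the reflections $t_{a_ib_i}$ assemble into disjoint cycles. The key structural observation (which the paper already flags as a fact left to the reader) is that in a sequence $t_{a_1b_1},\ldots,t_{a_pb_p}$ witnessing $u\tom{k}w$ we may reorder so that $b_1\le b_2\le\cdots\le b_p$; and since the $a_i$ are pairwise distinct with $a_i\le k<b_i$, the product $t_{a_1b_1}\cdots t_{a_pb_p}$ factors according to which reflections share a common $b$-value. So the first step is to group the indices $\{1,\ldots,p\}$ into blocks by the value of $b_i$; within a block with common value $b$ and $a$-values $a^{(1)},\ldots,a^{(r)}$ (appearing in that order in the sequence) one checks by a short computation that $t_{a^{(1)}b}t_{a^{(2)}b}\cdots t_{a^{(r)}b}=(a^{(r)},a^{(r-1)},\ldots,a^{(1)},b)$, an $(r+1)$-cycle. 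Distinct blocks have disjoint supports (the $a$'s are globally distinct and the $b$'s differ), so $w=uc_1\cdots c_n$ with the $c_i$ pairwise disjoint and cycle lengths $p_i+1$ summing correctly: $\sum(p_i)=p=\ell(u,w)$ by condition (3) of Definition \ref{definition:pieri}.

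Second, I would extract the monotonicity condition $u(a_{p_i})<\cdots<u(a_1)<u(b)$ from the length-additivity hypothesis $\ell(ut_{a_1b_1}\cdots t_{a_ib_i})=\ell(u)+i$. The standard fact is that right-multiplication by a transposition $t_{cd}$ (with $c<d$) increases length by exactly one iff $u(c)<u(d)$ and there is no index strictly between $c$ and $d$ whose value lies strictly between $u(c)$ and $u(d)$. Applying this successively along the chosen ordering of the reflections inside one block, and comparing values at the various $a^{(j)}$ and at $b$, forces precisely the claimed strict inequalities among the $u(a_j)$ and $u(b)$; the restrictions $a_j\le k<b$ are immediate from condition (1). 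Conversely, for the "if" direction I would start from the disjoint-cycle data, write each $c_i=(a_{p_i},\ldots,a_1,b)$ as the product $t_{a_1b}t_{a_2b}\cdots t_{a_{p_i}b}$ (here I need to double-check the order so the cycle comes out with the stated orientation — this is the same short computation as above, read backwards), concatenate over $i$, and verify conditions (1)-(4) of Definition \ref{definition:pieri}: (1), (2), (4) are clear from disjointness and $a_j\le k<b$, and (3) — the successive length increases — follows from the monotonicity hypothesis $u(a_{p_i})<\cdots<u(a_1)<u(b)$ together with the "no intermediate inversion" criterion, using that the supports of distinct cycles are disjoint so multiplications across different blocks do not interfere.

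The main obstacle is the careful verification of condition (3) in the "if" direction: one must check that each intermediate product $ut_{a_1b}\cdots t_{a_jb}$ (and then across blocks) genuinely gains exactly one inversion at each step. The subtlety is that after applying $t_{a_1b},\ldots,t_{a_{j-1}b}$ the value sitting in position $b$ of the partially-built permutation is no longer $u(b)$ but rather $u(a_{j-1})$ (or $u(b)$ only at the very end), so the "no index strictly between with intermediate value" test must be re-examined at each stage using the full chain of inequalities $u(a_{p_i})<\cdots<u(a_1)<u(b)$; showing no bad intermediate index exists is exactly where that chain of strict inequalities is used in full force. Once this is set up cleanly for a single cycle, handling several disjoint cycles is routine because disjoint supports make the length contributions additive and independent. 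I would also note the harmless ambiguity that the representation $c_i=(a_{p_i},\ldots,a_1,b)$ is only unique up to the choices already built into Lemma \ref{lemma:piericycle} (the $b$ is the unique element of the cycle exceeding $k$, which pins down the cyclic rotation), so no genuine well-definedness issue arises.
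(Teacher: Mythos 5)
Your ``only if'' direction is essentially the paper's argument: sort the reflections so the upper indices are weakly increasing, group them into blocks by the common value $b$, check that each block multiplies out to a cycle $(a_r,\ldots,a_1,b)$, and extract the chain of inequalities from the successive length increases (only the weak fact that $\ell(vt_{cd})>\ell(v)$ forces $v(c)<v(d)$ is needed, and the positions being compared are untouched by the other blocks). The gap is in your ``if'' direction. You propose to establish condition (3) --- that each intermediate multiplication gains \emph{exactly} one inversion --- by verifying the covering (``no intermediate inversion'') criterion at each step, and you claim the chain $u(a_{p_i})<\cdots<u(a_1)<u(b)$ is what rules out bad intermediate indices. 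It cannot: the chain says nothing about values at positions outside the cycle's support. Concretely, take $u$ the identity, $k=2$, and the single cycle $(1,3)=t_{13}$; the chain $u(1)<u(3)$ holds, yet $\ell(ut_{13})=\ell(u)+3$ because position $2$ carries an intermediate value. What saves the lemma is the global hypothesis $\ell(u,w)=p_1+\cdots+p_n$, which your sketch never invokes in this direction. The paper's proof uses it through a counting argument: each reflection increases the length by \emph{at least} one (this much does follow from the chain, since the other reflections never touch the two positions being compared), and since the total increase equals the number of reflections, every step must increase the length by exactly one. No covering-criterion verification is needed, and none is available from the stated hypotheses alone.

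Relatedly, your closing remark that ``disjoint supports make the length contributions additive and independent'' is false, and the paper warns against exactly this right after the lemma: for $u=[2,1,4,3]$ one has $\ell(ut_{13})=\ell(u)+1$ and $\ell(ut_{24})=\ell(u)+1$, but $\ell(ut_{13}t_{24})=\ell(u)+4$, even though $t_{13}$ and $t_{24}$ have disjoint supports and both cycles satisfy the inequality condition with $k=2$. Cross-cycle inversions can inflate the length, and again it is only the hypothesis $\ell(u,w)=\sum p_i$, fed through the same counting argument, that forces step-by-step exactness across blocks. If you drop the attempted local covering verification and instead route both the single-cycle and the multi-cycle cases through that counting argument, your outline becomes the paper's proof of Lemma \ref{lemma:piericycle}.
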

\begin{proof}
Suppose first that $u\tom{k} w$. Then there exist reflections $t_{a_1b_1},\ldots,t_{a_mb_m}$ for some $m$ with $a_i\neq a_j$ if $i\neq j$, $a_i\leq k<b_i$ for all $i$, $\ell(ut_{a_1b_1}\cdots t_{a_ib_i})=\ell(u)+i$ for all $i$, $b_i\leq b_j$ if $i\leq j$, and $w=ut_{a_1b_1}\cdots t_{a_mb_m}$. Fix $b$ such that there is an index $1\leq q\leq m$ such that $b=b_q$ and let $i$ and $p$ be such that $a_i,\ldots,a_{i+p}$ is the maximal portion of the sequence such that $b_j=b$ for all $i\leq j\leq i+p$. Let
$$c=t_{a_ib_i}\cdots t_{a_{i+p}b_{i+p}}$$
Given $j$ with $i\leq j\leq i+p$ we claim that if we define a permutation $c^{(j)}$ with
$$c^{(j)}=t_{a_ib}\cdots t_{a_jb}$$
then
$$c^{(j)}=(a_j,a_{j-1},\ldots,a_i,b)$$
and that
$$u(a_j)<\cdots<u(a_i)<u(b)$$
This is clear if $j=i$. Otherwise, suppose
$$c^{(j-1)}=t_{a_ib}\cdots t_{a_{j-1}b}=(a_{j-1},\ldots,a_i,b)$$
Then
$$c^{(j-1)}t_{a_jb}=(a_{j-1},\ldots,a_i,b)(b,a_j)=(a_{j-1},\ldots,a_i,b,a_j)=(a_j,\ldots,a_i,b)$$
as desired. We have that 
$$u(a_j)=uc^{(j-1)}(a_j)<uc^{(j-1)}(b)=u(a_{j-1})$$
and hence the desired claim follows by induction. Hence,
$$c=c^{(i+p)}=(a_{i+p},\ldots,a_i,b)$$
and
$$u(a_{i+p})<\cdots<u(a_i)<u(b)$$
Ranging over all $b$, these cycles, which we index in order of increasing $b$ as $c_1,\ldots,c_n$, are the ones that occur in the disjoint cycle decomposition of $u^{-1}w$, which we can see by the fact that the cycles are pairwise disjoint and $w=uc_1\ldots c_n$. Letting $p_i+1$ be the cycle length of each cycle $c_i$, we have that there are $p_1+\cdots+p_n=m$ reflections, and hence
$$\ell(uc_1\cdots c_n)=\ell(u)+p_1+\cdots+p_n$$
and we have the result.

We prove the converse now. Suppose the cycles $c_1,\ldots,c_n$ of cycle length $p_i+1$ for each $i$ exist satisfying the conditions, fixing $k$. Write $c_1$ as
$$c_1=(a_{p_1},\ldots,a_1,b)$$
with $u(a_{p_1})<\cdots<u(a_1)<u(b)$. Then
$$c_1=t_{a_1b}\cdots t_{a_{p_1}b}$$
We claim that $\ell(ut_{a_1b}\cdots t_{a_ib})>\ell(ut_{a_1b}\cdots t_{a_{i-1}b})$ for all $i$ (not yet claiming that the length increases by exactly $1$). We prove this by induction on $i$. For $i=1$, we note that $u(a_1)<u(b)$ by assumption, hence $\ell(ut_{a_1b})>\ell(u)$. Assume now that the result holds for $i-1$. Then
$$ut_{a_1b}\cdots t_{a_{i-1}b}(b)=u(a_{i-1})>u(a_i)=ut_{a_1b}\cdots t_{a_{i-1}b}(a_i)$$
since $a_j\neq a_i$ if $j\neq i$. Thus multiplying on the right by $t_{a_ib}$ increases the length by at least $1$. Applying this argument for each disjoint cycle $c_i$ to $\ell(uc_1\cdots c_i)$ and expanding the cycles into reflections, we obtain that successively multiplying on the right by each reflection increases the length by at least $1$ per reflection. Since there are $p_1+\cdots+p_n$ reflections and this is equal to $\ell(u,w)$, each reflection must increase the length by exactly $1$ and we have the result.
\end{proof}

Note that it is not sufficient in Lemma \ref{lemma:piericycle} to require only that $\ell(uc_i)=\ell(u)+p_i$ for all $i$, as it could occur that, for example in the case of two cycles, $\ell(uc_1)=\ell(u)+p_1$ and $\ell(uc_2)=\ell(u)+p_2$ but $\ell(uc_1c_2)\neq \ell(u)+p_1+p_2$. This can even happen if $p_1=p_2=1$, so that the cycles are reflections.

\begin{lemma} \label{lemma:nokm1k}
Let $u,w\in S_\infty$, let $k>0$ be an integer, and suppose $u\tom{k} w$. Then $u(i)\leq w(i)$ for all $i\leq k$, and $u(i)\geq w(i)$ for all $i>k$.
\end{lemma}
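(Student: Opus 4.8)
The statement to prove is Lemma~\ref{lemma:nokm1k}: if $u\tom{k} w$, then $u(i)\leq w(i)$ for all $i\leq k$ and $u(i)\geq w(i)$ for all $i>k$. The plan is to use the cycle characterization of the Pieri relation from Lemma~\ref{lemma:piericycle}. Write $w=uc_1\cdots c_n$ with the $c_i$ pairwise disjoint cycles of cycle length $p_i+1$, where each cycle has the form $c_i=(a_{p_i},a_{p_i-1},\ldots,a_1,b)$ with $a_j\leq k$ for all $j$, $b>k$, and $u(a_{p_i})<\cdots<u(a_1)<u(b)$. Because the cycles are pairwise disjoint, the effect of right-multiplication by $c_1\cdots c_n$ on any particular index $i$ is governed entirely by whichever single cycle (if any) contains $i$; so it suffices to check the inequalities one cycle at a time, and for indices $i$ not moved by any cycle there is nothing to prove since $w(i)=u(i)$.

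\textbf{Key step.} Fix one cycle $c=(a_p,a_{p-1},\ldots,a_1,b)$ with $u(a_p)<\cdots<u(a_1)<u(b)$, $a_j\leq k$, $b>k$. I need to compare $w(i)=uc(i)$ with $u(i)$ for $i\in\{a_1,\ldots,a_p,b\}$. The cycle $c$ sends, by definition, $a_j\mapsto a_{j-1}$ for $2\leq j\leq p$, sends $a_1\mapsto b$, and sends $b\mapsto a_p$; all other points are fixed by $c$. Hence $w(a_j)=uc(a_j)=u(a_{j-1})$ for $j\geq 2$, and since $u(a_j)<u(a_{j-1})$ by the increasing-chain hypothesis, we get $w(a_j)=u(a_{j-1})>u(a_j)$, i.e. $w(a_j)\geq u(a_j)$ for $2\leq j\leq p$. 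For $j=1$, $w(a_1)=uc(a_1)=u(b)>u(a_1)$, again $w(a_1)\geq u(a_1)$. These all have $a_j\leq k$, matching the first claimed inequality. Finally $w(b)=uc(b)=u(a_p)$, and $u(a_p)<u(a_1)<u(b)$ gives $w(b)=u(a_p)<u(b)$, i.e. $w(b)\leq u(b)$; and $b>k$, matching the second claimed inequality. Assembling over all cycles, every index $i\leq k$ that is moved satisfies $w(i)\geq u(i)$, every index $i>k$ that is moved satisfies $w(i)\leq u(i)$, and unmoved indices satisfy equality, which is the lemma.

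\textbf{Main obstacle.} There is no serious obstacle: the content is entirely in invoking Lemma~\ref{lemma:piericycle} correctly and then carefully reading off the images of each point under a single cycle. The one point that deserves attention is making sure the ``small'' indices $a_1,\ldots,a_p$ really are all $\leq k$ and the unique ``large'' index $b$ is really $>k$ in each cycle — this is exactly what Lemma~\ref{lemma:piericycle} supplies, together with the strict chain $u(a_p)<\cdots<u(a_1)<u(b)$, which is what forces the direction of each inequality. A minor bookkeeping remark to include: since the cycles are pairwise disjoint, no index appears in two cycles, so the per-cycle analysis is genuinely independent and the conclusions combine without interference.
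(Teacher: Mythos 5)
Your proof is correct, but it takes a different route from the paper. The paper proves Lemma~\ref{lemma:nokm1k} directly from Definition~\ref{definition:pieri} by induction on the number of transpositions $t_{a_1b_1},\ldots,t_{a_pb_p}$ realizing $u\tom{k}w$: adding one more reflection $t_{a_{p+1}b_{p+1}}$ with $\ell(wt_{a_{p+1}b_{p+1}})=\ell(w)+1$ changes only the two positions $a_{p+1}\leq k<b_{p+1}$, and the inequalities $u(a_{p+1})\leq w(a_{p+1})<w(b_{p+1})=wt_{a_{p+1}b_{p+1}}(a_{p+1})$ and $u(b_{p+1})\geq w(b_{p+1})>w(a_{p+1})=wt_{a_{p+1}b_{p+1}}(b_{p+1})$ propagate the claim. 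You instead invoke the cycle characterization of Lemma~\ref{lemma:piericycle} and read off the images under each disjoint cycle $(a_p,\ldots,a_1,b)$; this is not circular, since the paper proves Lemma~\ref{lemma:piericycle} before Lemma~\ref{lemma:nokm1k} without using it, and your bookkeeping (disjointness means each index is moved by at most one cycle, the chain $u(a_p)<\cdots<u(a_1)<u(b)$ forces $w(a_j)=u(a_{j-1})>u(a_j)$, $w(a_1)=u(b)>u(a_1)$, $w(b)=u(a_p)<u(b)$) is accurate. The trade-off: your argument leans on the heavier machinery of the cycle decomposition but yields slightly sharper information (strict inequalities at every moved position, and the exact values of $w$ there), while the paper's induction is more elementary, self-contained, and keeps the logical dependency on Lemma~\ref{lemma:piericycle} out of this lemma entirely.
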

\begin{proof}
Let $t_{a_1b_1},\ldots,t_{a_pb_p}$ be a sequence of reflections realizing $u\tom{k} w$, with $a_i\leq k<b_i$ for all $i$ and $a_i\neq a_j$ if $i\neq j$, by which we mean $\ell(ut_{a_1b_1}\cdots t_{a_ib_i})=\ell(u)+i$ for all $i$ and $w=ut_{a_1b_1}\cdots t_{a_pb_p}$. We prove the statement by induction on $p$. If $p=0$, then $u(i)=w(i)$ for all $i$, so the result is clear. Otherwise, suppose the result holds for some $p\geq 0$; we show the result holds for $p+1$, considering a reflection $t_{a_{p+1}b_{p+1}}$ such that $a_{p+1}\leq k<b_{p+1}$ and $\ell(wt_{a_{p+1}b_{p+1}})=\ell(w)+1$, with $a_i\neq a_{p+1}$ for all $1\leq i\leq p$. By the induction hypothesis, $u(i)\leq w(i)$ for all $i\leq k$, and $u(i)\geq w(i)$ for all $i>k$. The only two positions that $wt_{a_{p+1}b_{p+1}}$ and $w$ differ are $a_{p+1}$ and $b_{p+1}$. Since $\ell(wt_{a_{p+1}b_{p+1}})=\ell(w)+1$, we have that $w(a_{p+1})<w(b_{p+1})$, hence 
$$u(a_{p+1})\leq w(a_{p+1})<w(b_{p+1})=wt_{a_{p+1}b_{p+1}}(a_{p+1})$$ 
Similarly, 
$$u(b_{p+1})\geq w(b_{p+1})>w(a_{p+1})=wt_{a_{p+1}b_{p+1}}(b_{p+1})$$
The result follows by induction.
\end{proof}

The following is easy to see, but we will be required to refer back to it and hence we state it as a lemma.

\begin{lemma} \label{lemma:numelems}
Suppose $u,w\in S_\infty$, $k>0$ is an integer, and suppose $u\tom{k} w$. Then
$$|P_k(u,w)|=k-\ell(u,w)$$
\end{lemma}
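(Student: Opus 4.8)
The plan is to count the positions $i \le k$ at which $u$ and $w$ agree by exploiting the cycle description from Lemma \ref{lemma:piericycle}. Write $w = uc_1\cdots c_n$ where the $c_i$ are pairwise disjoint cycles with $c_i$ of cycle length $p_i+1$, and $\ell(u,w) = p_1+\cdots+p_n$. The positions $i \le k$ at which $u$ and $w$ differ are exactly the positions $i \le k$ moved by at least one $c_i$; since the cycles are disjoint, these positions are disjoint across the cycles, so it suffices to count, for each fixed cycle $c_i$, how many of its elements are $\le k$.

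For a single cycle $c = c_i$, write it as $c = (a_{p_i}, a_{p_i-1}, \ldots, a_1, b)$ as in Lemma \ref{lemma:piericycle}, so that $a_1,\ldots,a_{p_i} \le k$ and $b > k$. Thus exactly $p_i$ of the $p_i+1$ elements of this cycle are $\le k$. These $p_i$ elements are precisely the positions $i \le k$ at which $uc_i$ differs from $u$, and hence (by disjointness) the positions $i \le k$ at which $w$ differs from $u$ coming from this cycle. Summing over $i = 1,\ldots,n$, the total number of positions $i \le k$ at which $u(i) \ne w(i)$ is $p_1 + \cdots + p_n = \ell(u,w)$.

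Finally, $P_k(u,w) = \{u(i) \mid i \le k \text{ and } u(i) = w(i)\}$ has size equal to the number of positions $i \le k$ with $u(i) = w(i)$ (the map $i \mapsto u(i)$ is injective), which is $k$ minus the number of disagreement positions computed above, namely $k - \ell(u,w)$. This gives the claim.

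I do not expect any serious obstacle here; the only point requiring a little care is making sure the disjointness of the cycles is invoked correctly so that the disagreement positions are counted without overlap, and that one uses the specific normal form of each cycle (exactly one element $> k$, the rest $\le k$) guaranteed by Lemma \ref{lemma:piericycle} rather than some arbitrary cycle of the right length. One could alternatively give a proof directly from Definition \ref{definition:pieri}: the reflections $t_{a_1 b_1}, \ldots, t_{a_p b_p}$ realizing $u \tom{k} w$ have distinct $a_i \le k$, and one checks that position $a_i$ is a disagreement position for each $i$ (since these are exactly the positions $\le k$ touched), giving $|P_k(u,w)| = k - p = k - \ell(u,w)$; but routing through Lemma \ref{lemma:piericycle} makes the disjointness bookkeeping cleanest.
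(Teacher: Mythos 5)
Your proof is correct. The paper's own argument works directly from Definition \ref{definition:pieri}: it takes the transpositions $t_{a_1b_1},\ldots,t_{a_pb_p}$ realizing $u\tom{k}w$ and observes that, because the $a_i\leq k$ are pairwise distinct and every $b_i>k$, the positions $\leq k$ at which $u$ and $w$ disagree are precisely the $a_i$, giving $p=\ell(u,w)$ disagreements and hence $k-\ell(u,w)$ agreements; this is exactly the ``alternative'' you sketch in your last sentence. Your primary route instead passes through the cycle characterization of Lemma \ref{lemma:piericycle}, counting for each disjoint cycle the $p_i$ support elements that lie in $\{1,\ldots,k\}$. That is a legitimate variant: there is no circularity (Lemma \ref{lemma:piericycle} is proved independently and earlier), and disjointness does make the counting of disagreement positions transparent, since for $i$ in the support of $c_j$ one has $w(i)=u(c_j(i))\neq u(i)$ by injectivity of $u$, a point you correctly flag. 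The trade-off is that you invoke a heavier structural lemma where the paper's direct argument from the defining reflection sequence suffices; conceptually, though, both proofs rest on the same observation that each unit of $\ell(u,w)$ consumes a distinct position $\leq k$.
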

\begin{proof}
Let $p=\ell(u,w)$. Then there are reflections $t_{a_1b_1},\ldots,t_{a_pb_p}$ such that $a_i\leq k<b_i$ for all $i$,
$$\ell(ut_{a_1b_1}\cdots t_{a_ib_i})=\ell(u)+i$$
for all $i$, and $w=ut_{a_1b_1}\cdots t_{a_pb_p}$, with $a_i\neq a_j$ whenever $i\neq j$. By this last condition, the number of indices $i\leq k$ such that $u(i)\neq w(i)$ is exactly $p$; specifically, these indices are the values $a_j$. Thus $u(i)=w(i)$ for exactly $k-p=k-\ell(u,w)$ values of $i$ such that $i\leq k$.
\end{proof}

Parts of the statements of the below lemmas are presented in \cite{robinsonpieri} (the results are stated for multiplying $w$ by a reflection instead of $u$, which is equivalent via applying our lemmas to the interval $[w_0(n)w,w_0(n)u]$), but we will include proofs here in order to be complete. The purpose of the lemmas is to determine which terms remain and which terms cancel in an expansion of the Pieri product in the proof of Proposition \ref{proposition:minipieri}.

\begin{lemma} \label{lemma:pieriknotkn1}
Suppose $u,w\in S_\infty$ and let $k>1$ be an integer. If $u\tom{k} w$ but $u\ntom{k-1} w$, then there is a unique $q'>k$ such that $\ell(ut_{kq'})=\ell(u)+1$ and $ut_{kq'}\tom{k-1} w$. Furthermore, in that case there does not exist a $q<k$ such that $\ell(ut_{qk})=\ell(u)+1$ and $ut_{qk}\tom{k-1} w$.
\end{lemma}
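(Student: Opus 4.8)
The plan is to work with the cycle decomposition of $u^{-1}w$ furnished by Lemma \ref{lemma:piericycle}. Since $u\tom{k} w$ but $u\ntom{k-1} w$, at least one of the disjoint cycles of $u^{-1}w$ must use the index $k$; more precisely, exactly one cycle $c_i = (a_{p_i},\ldots,a_1,b)$ has some $a_j = k$, for if no cycle used $k$ then every $a$-index is $\leq k-1$ and the same reflections would witness $u\tom{k-1} w$. Moreover, because the $a$-indices within a single cycle are pairwise distinct, $k$ occurs in at most one cycle, and it occurs at most once within that cycle. So first I would isolate this cycle, say $c = (a_{p},\ldots,a_1,b)$ with $a_r = k$ for a unique $r$, and note $u(a_p) < \cdots < u(a_1) < u(b)$ by the lemma.

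Next I would split into the two cases for where $k$ sits in $c$. The key observation is that in the reflection factorization $c = t_{a_1 b}\cdots t_{a_p b}$ (with $b_i = b$ for all these reflections), we are free to reorder the reflections cyclically: as shown in the proof of Lemma \ref{lemma:piericycle}, for any ordering compatible with the strict increase $u(a_{\sigma(1)}) < u(a_{\sigma(2)}) < \cdots$ we get a valid length-additive factorization. I would argue that to reach a witness of $u\tom{k-1} w$ we must "peel off" the reflection involving $k$ first, because $k$ is the only forbidden $a$-index. If $r = p$, i.e. $u(k) = u(a_p)$ is the \emph{smallest} among $u(a_1),\ldots,u(a_p),u(b)$, then $t_{a_p b} = t_{kb}$ is the first reflection applied to $u$ in the factorization ordered by increasing $u$-value, so $\ell(ut_{kb}) = \ell(u)+1$, and $ut_{kb}$ together with the remaining reflections (all of whose $a$-indices are $\leq k-1$, being $a_1,\ldots,a_{p-1}$) plus the other untouched cycles witnesses $ut_{kb}\tom{k-1} w$; here $q' = b$. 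If instead $1 \leq r < p$, so $u(k)$ is \emph{not} the smallest, then I claim no choice of $q' > k$ works: $ut_{kq'}$ would have to be a length-$1$ step, forcing $u(k) < u(q')$, and then $(ut_{kq'})^{-1}w$ would need a disjoint-cycle decomposition with all $a$-indices $< k$; but modifying $u$ at positions $k$ and $q'$ disrupts the strict chain $u(a_p)<\cdots<u(a_1)<u(b)$ in a way that cannot be repaired by a cycle avoiding $k$ — I would make this precise by tracking the value $w(k) = u(a_{r-1})$ and showing $q'$ would have to be an element already in the cycle $c$, namely $q' = b$ only works when $r = p$. This is the step I expect to be the main obstacle: pinning down \emph{uniqueness} of $q'$ and ruling out the "left" transposition $t_{qk}$ with $q < k$ requires carefully analyzing how the cycle $c$ decomposes and showing that the only valid "first move" removing $k$ from play is $t_{kb}$ (when $u(k)$ is minimal in its cycle) — both because of the strict ordering of $u$-values and because a reflection $t_{qk}$ with $q<k$ would have $\ell(ut_{qk}) = \ell(u)+1$ only if $u(q) < u(k)$, but then $q$ and $k$ both lie in the cycle with $u(q)$ adjacent above $u(k)$, and removing that reflection leaves $k$ still present as an $a$-index unless $k = a_1$, which is the $u(k)$-maximal-among-$a$'s situation, incompatible with $q < k$ also being peelable.

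For the final sentence of the statement — nonexistence of $q < k$ with $\ell(ut_{qk}) = \ell(u)+1$ and $ut_{qk}\tom{k-1} w$ — I would argue as follows. Such a $q$ satisfies $u(q) < u(k)$, and $ut_{qk}$ differs from $u$ only at positions $q, k$. For $ut_{qk}\tom{k-1} w$ we need $(ut_{qk})^{-1}w$ to decompose into disjoint cycles with all "lower" indices $\leq k-1$; in particular position $k$ must already satisfy $ut_{qk}(k) \leq w(k)$ with equality forced unless $k$ is... but actually by Lemma \ref{lemma:nokm1k} applied to $u\tom{k} w$ we have $u(k) \leq w(k)$, whereas $ut_{qk}(k) = u(q) < u(k) \leq w(k)$, so position $k$ is still not fixed and $k$ would have to appear as a lower index in the decomposition of $(ut_{qk})^{-1}w$ — contradicting $ut_{qk}\tom{k-1} w$. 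I would then double-check consistency: in the genuine case $r = p$ we used $t_{kb}$ with $b > k$, which is fine, and the above shows the "left" option is always impossible, while among right options $t_{kq'}$ the value $q'$ is forced to be the $b$ of the unique cycle containing $k$ and forced to have $u(k)$ minimal in that cycle, giving both existence (in the $r=p$ case, which the hypothesis $u\tom{k} w$, $u\ntom{k-1}w$ guarantees must occur) and uniqueness. Finally I would remark that the edge behavior at $k = 1$ is excluded by the hypothesis $k > 1$, so $k - 1 \geq 1$ and the relation $\tom{k-1}$ is defined.
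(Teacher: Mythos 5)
Your overall strategy---pass to the disjoint-cycle decomposition of $u^{-1}w$ from Lemma \ref{lemma:piericycle}, isolate the unique cycle $c=(a_p,\ldots,a_1,b)$ containing $k$, and take $q'=b$---is the right one, and your last paragraph correctly rules out the transpositions $t_{qk}$ with $q<k$: if $\ell(ut_{qk})=\ell(u)+1$ then $ut_{qk}(k)=u(q)<u(k)\leq w(k)$ (the last inequality from Lemma \ref{lemma:nokm1k} applied to $u\tom{k}w$), while $ut_{qk}\tom{k-1}w$ would force $ut_{qk}(k)\geq w(k)$ by the same lemma. That part is correct, and is in fact shorter than the cycle argument the paper gives for it (it is the argument the paper uses in Lemma \ref{lemma:pieritogether}).

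The heart of the lemma, however---existence and uniqueness of $q'$---is broken in your sketch. You claim that $t_{kb}$ can only be used as the ``first peel,'' hence only when $k=a_p$ (i.e.\ $u(k)$ minimal among the $a$-values of its cycle), that no $q'>k$ works otherwise, and that the hypotheses force this case. All three claims are false. Take $u$ the identity, $w=[2,3,1]$, $k=2$: here $u^{-1}w$ is the single cycle $(1,2,3)$, so $a_2=1$, $a_1=2=k$, $b=3$; we have $u\tom{2}w$, the relation $u\tom{1}w$ fails, and $k$ is \emph{not} in the minimal-value position, yet $q'=3$ works, since $ut_{23}=[1,3,2]$ has length $\ell(u)+1$ and $(ut_{23})^{-1}w=(1,3)$ witnesses $ut_{23}\tom{1}w$. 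The idea you are missing is that $k$ never needs to be ``removed from play'': writing $a_j=k$, multiplication by $t_{kb}$ splits the cycle into $(a_p,\ldots,a_{j+1},b)$ and $(a_{j-1},\ldots,a_1,k)$, and the second cycle is legal for the relation with threshold $k-1$ because $k>k-1$ can itself serve as the unique large element; the required chains $u(a_p)<\cdots<u(a_{j+1})<u(k)$ and $u(a_{j-1})<\cdots<u(a_1)<u(b)$ come straight from the ordering in the original cycle. This is exactly how the paper proves existence for \emph{every} position of $k$ in its cycle. Uniqueness then needs a separate argument, which you defer to ``the main obstacle'': one must check that for any other $r>k$ with $\ell(ut_{kr})=\ell(u)+1$, the decomposition of $(ut_{kr})^{-1}w$ acquires a cycle containing two elements exceeding $k-1$ (namely $t_{kr}c$, or the merged cycle $t_{kr}cc'$ when $r$ lies in another cycle $c'$), so the relation to $w$ with threshold $k-1$ fails. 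Nothing in your sketch supplies this, and since your uniqueness reasoning rests on the false claim that only the $k=a_p$ case can occur, the existence/uniqueness half needs to be redone along the lines above.
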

\begin{proof}
Suppose $u\tom{k} w$ but $u\ntom{k-1} w$. Let $c_1,\ldots,c_n$ be the pairwise disjoint cycles in the decomposition of $u^{-1}w$, with the cycle length of $c_i$ being $p_i+1$. Some such cycle must contain $k$, as otherwise we would have $u\tom{k-1} w$ by Lemma \ref{lemma:piericycle}. Assume that $c_1=(a_{p_1},\ldots,a_1,b)$ is the cycle that contains $k$ and set $q'=b$. We claim that $ut_{kq'}\tom{k-1} w$ and $\ell(ut_{kq'})=\ell(u)+1$. Note that $u=(ut_{kq'})t_{kq'}$ and hence 
$$(ut_{kq'})t_{kq'}c_1\cdots c_n=w$$
Let $j$ be the index such that $a_j=k$. Then we have
\begin{align*}
t_{kq'}(a_{p_1},\ldots,a_{j+1},k,a_{j-1},\ldots,a_1,q')&=t_{kq'}(q',a_{p_1},\ldots,a_{j+1},k,a_{j-1},\ldots,a_1)\\
&=t_{kq'}(q',a_{p_1},\ldots,a_{j+1},k)(k,a_{j-1},\ldots,a_1)\\
&=t_{kq'}(k,q')(a_{p_1},\ldots,a_{j+1},q')(a_{j-1},\ldots,a_1,k)\\
&=(a_{p_1},\ldots,a_{j+1},q')(a_{j-1},\ldots,a_1,k)
\end{align*}
Note that $ut_{kq'}(k)=u(q')$, $ut_{kq'}(q')=u(k)$, and otherwise $ut_{kq'}(a)=u(a)$ for all $a\notin \{k,q'\}$. Thus,
$$ut_{kq'}(a_{p_1})<\cdots<ut_{kq'}(a_{j+1})$$
since the same is true for $u$, and 
$$ut_{kq'}(a_{j+1})=u(a_{j+1})<u(k)=ut_{kq'}(q')$$
hence the first cycle is valid for $\tom{k-1}$. Furthermore, if $j>1$ then
$$ut_{kq'}(a_{j-1})<\cdots<ut_{kq'}(a_{1})$$
for the same reason, and
$$ut_{kq'}(a_{1})=u(a_{1})<u(q')=ut_{kq'}(k)$$
hence the second cycle is valid for $\tom{k-1}$ as well, whereas if $j=1$ then there is no second cycle. Since the (up to) two new cycles together with $c_2,\ldots,c_n$ are all pairwise disjoint and $\ell(ut_{kq'},w)=\ell(u,w)-1$, the cycle lengths sum up correctly and we have that $ut_{kq'}\tom{k-1} w$ by Lemma \ref{lemma:piericycle}, as desired.

To show that the $q'$ we have chosen is unique, let $r>k$ be any index other than $q'$ such that $\ell(ut_{kr})=\ell(u)+1$. Suppose first that $r$ is not in any of the cycles $c_i$. Recall that $c_1$ is the cycle that contains $k$. Then $t_{kr}c_1$ is a cycle in the disjoint cycle decomposition of $(ut_{kr})^{-1}w$ containing both $k$ and $r$, which are both larger than $k-1$. This violates the condition of Lemma \ref{lemma:piericycle} that in order for $ut_{kr}\tom{k-1} w$ we must have that each cycle contains exactly one element larger than $k-1$, hence we have $ut_{kr}\ntom{k-1} w$. If $r$ is in one of the cycles, since $r>k$ and $r\neq q'$ this cycle cannot be $c_1$. Say the cycle containing $r$ is $c_2=(a_{p_2}',\ldots,a_1',r)$. Then 
\begin{align*}
t_{kr}c_1c_2&=t_{kr}(r,a_{p_2}',\ldots,a_1')(a_{p_1},\ldots,a_1,q')\\
            &=(r,a_{p_2}',\ldots,a_1',k)(k,a_{j-1},\ldots,q',a_{p_1},\ldots,a_{j+1})\\
						&=(r,a_{p_2}',\ldots,a_1',k,a_{j-1},\ldots,q',a_{p_1},\ldots,a_{j+1})
\end{align*}
is a single cycle disjoint from all the others containing both $k$ and $r$,  which are both greater than $k-1$, hence we have $ut_{kr}\ntom{k-1} w$ by Lemma \ref{lemma:piericycle}. The uniqueness of $q'$ follows.

To prove the nonexistence of $q$, consider the same cycle decomposition $c_1,\ldots,c_n$. Let $q<k$ be such that $\ell(ut_{qk})=\ell(u)+1$.  If $c_1$ (the cycle that contains $k$) does not contain $q$, then $t_{qk}c_1$ is a cycle containing both $k$ and some integer larger than $k$. If some other cycle $c_2$ contains $q$, $t_{qk}c_1c_2$ is disjoint from all the other cycles and contains both $k$ and some integer larger than $k$, similarly to the calculation above. Thus if $c_1$ does not contain $q$ then the disjoint cycle decomposition of $(ut_{qk})^{-1}w$ contains the cycle $t_{qk}c_1$ or $t_{qk}c_1c_2$, which implies that $ut_{qk}\ntom{k-1} w$ by Lemma \ref{lemma:piericycle} since the cycle contains two elements larger than $k-1$. To handle the case where $c_1$ contains $q$, in that case $t_{qk}$ splits $c_1$ into two disjoint cycles as follows. Assume $c_1=(a_{p_1},\ldots,q')$ with $a_j=k$ and $a_{j'}=q$. We must have that $j'>j$ since $u(q)<u(k)$. Then 
\begin{align*}
t_{qk}c_1&=t_{qk}(a_{p_1},\ldots,q,\ldots,k,\ldots,q')\\
         &=t_{qk}(q,\ldots,k,\ldots,q',a_{p_1},\ldots,a_{j'+1})\\
				 &=t_{qk}(q,a_{j'-1},\ldots,k)(k,\ldots,q',a_{p_1},\ldots,a_{j'+1})\\
				 &=(q,a_{j'-1},\ldots,a_{j+1})(a_{p_1},\ldots,a_{j'+1},k,\ldots,q')
\end{align*}
The cycle $(a_{p_1},\ldots,k,\ldots,q')$ contains two elements larger than $k-1$, implying that $ut_{qk}\ntom{k-1} w$ by Lemma \ref{lemma:piericycle}, as desired.
\end{proof}

\begin{example}
Let $u=[3,1,6,5,2,4]$, let $w=[4, 2, 7, 6, 1, 3, 5]$, and let $k=4$. We have that 
$$u^{-1}w=(1,6)(2,5)(4,3,7)$$
Thus $u\tom{k} w$, but $u\ntom{k-1} w$ because of the third cycle containing two elements greater than $k-1$. We set $q'=7$ according to the proof. Then
$$ut_{kq'}=[3,1,6,7,2,4,5]$$
Then we see the cycle splits by
$$t_{kq'}u^{-1}w=(1,6)(2,5)t_{kq'}(4,3,7)=(1,6)(2,5)t_{kq'}(4,7)(3,4)=(1,6)(2,5)(3,4)$$
and indeed we have that $ut_{kq'}\tom{k-1} w$.
\end{example}

\begin{lemma} \label{lemma:pieritogether}
Suppose $u,w\in S_\infty$ and let $k>1$ be an integer. If $u\tom{k} w$ and $u\tom{k-1} w$, then there is no $q<k$ such that $\ell(ut_{qk})=\ell(u)+1$ and $ut_{qk}\tom{k-1} w$ and there is no $q'>k$ such that $\ell(ut_{kq'})=\ell(u)+1$ and $ut_{kq'}\tom{k-1} w$.
\end{lemma}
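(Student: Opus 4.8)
The plan is to prove both non-existence statements at once by considering an arbitrary transposition $t_{km}$ with $m\neq k$ and $\ell(ut_{km})=\ell(u)+1$: the first assertion is the case $m=q<k$ (so that $t_{qk}=t_{km}$) and the second is the case $m=q'>k$ (so that $t_{kq'}=t_{km}$). I would run the whole argument through the cycle characterization of $\tom{}$ in Lemma \ref{lemma:piericycle}, using that the disjoint cycle decomposition of a permutation is unique; thus if $ut_{km}\tom{k-1}w$, then the \emph{actual} nontrivial cycles of $(ut_{km})^{-1}w$ must each have the shape described in that lemma, and it suffices to exhibit one cycle that does not.

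First I would record the key structural fact: under the two hypotheses $u\tom{k}w$ and $u\tom{k-1}w$, the index $k$ is fixed by $u^{-1}w$. Applying Lemma \ref{lemma:nokm1k} to $u\tom{k}w$ gives $u(k)\le w(k)$, while applying it to $u\tom{k-1}w$ gives $u(k)\ge w(k)$ (since $k>k-1$), so $u(k)=w(k)$ and $u^{-1}w(k)=k$. Hence $k$ lies in no nontrivial cycle of $u^{-1}w$. Writing $c_1,\dots,c_n$ for those cycles and applying Lemma \ref{lemma:piericycle} to $u\tom{k-1}w$, I get $c_i=(a_{p_i},\dots,a_1,b_i)$ with all $a_j\le k-1$ and $b_i>k-1$; since $b_i\neq k$, this forces $b_i>k$, so each $c_i$ has exactly one element exceeding $k-1$, namely $b_i$.

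Then I would fix $m\neq k$ with $\ell(ut_{km})=\ell(u)+1$, assume for contradiction that $ut_{km}\tom{k-1}w$, and note $(ut_{km})^{-1}w=t_{km}\,c_1\cdots c_n$, splitting into two cases. If $m$ is also fixed by $u^{-1}w$, then the nontrivial cycles of $(ut_{km})^{-1}w$ are $(k,m),c_1,\dots,c_n$; for the $2$-cycle $(k,m)$ to have the form $(a_1,b)$ with $a_1\le k-1<b$, since $k>k-1$ one must have $m<k$ with $a_1=m$, $b=k$, and the remaining condition $ut_{km}(m)<ut_{km}(k)$ then reads $u(k)<u(m)$, contradicting $u(m)<u(k)$ (which holds since $m<k$ and $\ell(ut_{km})=\ell(u)+1$). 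If instead $m$ lies in some $c_i$, then because $k\notin c_i$ the product $t_{km}c_i$ is a single cycle consisting of all elements of $c_i$ together with $k$, hence is a nontrivial cycle of $(ut_{km})^{-1}w$ containing the two distinct elements $k$ and $b_i>k$, both exceeding $k-1$; this contradicts the observation from the first step that a cycle of the shape required by Lemma \ref{lemma:piericycle} for $\tom{k-1}$ has only one element exceeding $k-1$. Either way $ut_{km}\ntom{k-1}w$, which is precisely the two assertions of the lemma.

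I expect the main obstacle to be recognizing the first step as the crux: once one knows that $k$ is fixed by $u^{-1}w$, the remaining cycle manipulations are forced and essentially identical to those in the proof of Lemma \ref{lemma:pieriknotkn1} (indeed this lemma is its exact complement). The only delicate calculation within the case analysis is the sign check in the first case — translating ``$\ell(ut_{km})=\ell(u)+1$ with $m<k$'' into ``$u(m)<u(k)$'' and playing it against the ordering condition $ut_{km}(m)<ut_{km}(k)$ coming from the cycle $(m,k)$.
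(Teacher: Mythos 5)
Your proof is correct and follows essentially the same route as the paper: both hinge on the cycle characterization of Lemma \ref{lemma:piericycle}, the uniqueness of the disjoint cycle decomposition, and the key fact that $k$ lies in no nontrivial cycle of $u^{-1}w$ (which you get from Lemma \ref{lemma:nokm1k} via $u(k)=w(k)$, and the paper gets from the shape condition directly). The only cosmetic difference is that for the $q<k$ half the paper shortcuts via Lemma \ref{lemma:nokm1k} alone (since $ut_{qk}(k)=u(q)<w(k)$ already violates the monotonicity required for $ut_{qk}\tom{k-1}w$), whereas you run that case through the same cycle analysis with the sign check; both are valid.
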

\begin{proof}
Let $c_1,\ldots,c_n$ be the pairwise disjoint cycles with $c_i$ a $(p_i+1)$-cycle in the decomposition of $u^{-1}w$. These cycles, since the disjoint cycle decomposition is unique, must realize both $u\tom{k}w$ and $u\tom{k-1} w$. Thus none of these cycles can contain $k$; if one of them did, it would be impossible that $u\tom{k-1} w$ because the cycle would have two elements larger than $k-1$. 

We show first that element $q'>k$ cannot exist. Suppose $q'>k$ is such that $\ell(ut_{kq'})=\ell(u)+1$. If none of the cycles $c_i$ contains $q'$, then $(k,q')$ is a free cycle in the disjoint cycle decomposition of $(ut_{kq'})^{-1}w$. Since both elements are greater than $k-1$, we then cannot have that $ut_{kq'}\tom{k-1} w$. If one of the cycles does contain $q'$, say $c_1$, then since $c_1$ does not contain $k$ we have that $t_{kq'}c_1$ is a $(p_1+2)$-cycle containing both $k$ and $q'$; again, this implies that $ut_{kq'}\ntom{k-1} w$ because both elements are greater than $k-1$. It follows that no $q'$ exists with $\ell(ut_{kq'})=\ell(u)+1$ and $ut_{kq'}\tom{k-1} w$.

Now we consider $q$. Suppose $q<k$ is such that $\ell(ut_{qk})=\ell(u)+1$. Then $u(q)<u(k)\leq w(k)$, the second inequality by Lemma \ref{lemma:nokm1k} since $u\tom{k} w$. We then have that $ut_{qk}(k)=u(q)<w(k)$, hence by Lemma \ref{lemma:nokm1k} we cannot have that $ut_{qk}\tom{k-1} w$ since this would require that $ut_{qk}(k)\geq w(k)$. The result follows.
\end{proof}

\begin{lemma} \label{lemma:pierikn1notk}
Suppose $u,w\in S_\infty$ and let $k>1$ be an integer. If $u\tom{k-1} w$ and $u\ntom{k} w$, then there is a unique $q<k$ such that $\ell(ut_{qk})=\ell(u)+1$ and $ut_{qk}\tom{k-1} w$. When this is the case, then $P_{k-1}(ut_{qk},w)=P_{k-1}(u,w)\cup \{u(k)\}$, and for any $q'>k$ such that $\ell(ut_{kq'})=\ell(u)+1$ we have that $ut_{kq'}\ntom{k-1} w$.
\end{lemma}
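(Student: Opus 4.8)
The plan is to run the cycle-decomposition analysis of Lemmas~\ref{lemma:pieriknotkn1} and~\ref{lemma:pieritogether} in the remaining case. I would first apply Lemma~\ref{lemma:piericycle} to $u\tom{k-1}w$, obtaining pairwise disjoint cycles $c_1,\dots,c_n$ with $w=uc_1\cdots c_n$, $\ell(u,w)=p_1+\cdots+p_n$, and each $c_i$ having a unique element exceeding $k-1$. The $\tom{k-1}$ and $\tom{k}$ conditions of Lemma~\ref{lemma:piericycle} differ only in whether that distinguished element must exceed $k-1$ or $k$, and every other element of every cycle is at most $k-1<k$; hence if all distinguished elements exceeded $k$ we would get $u\tom{k}w$. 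Since $u\ntom{k}w$, exactly one cycle — relabel it $c_1=(a_{p_1},\dots,a_1,k)$, with $u(a_{p_1})<\cdots<u(a_1)<u(k)$ and all $a_i\le k-1$ — has its distinguished element equal to $k$. I claim $q=a_1$ (which is $<k$) is the index sought.

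For existence and the interval formula: expanding $c_1=t_{a_1k}t_{a_2k}\cdots t_{a_{p_1}k}$ as in the converse direction of Lemma~\ref{lemma:piericycle} already gives $\ell(ut_{a_1k})=\ell(u)+1$, and with $u'=ut_{a_1k}$ we have $u'(a_j)=u(a_j)$ for $j\ge2$ and $u'(k)=u(a_1)$, so the cycles $(a_{p_1},\dots,a_2,k),c_2,\dots,c_n$ are pairwise disjoint, multiply to $(u')^{-1}w$, have the correct cycle-length sum $\ell(u,w)-1=\ell(w)-\ell(u')$, and satisfy the ordering requirement of Lemma~\ref{lemma:piericycle}; hence $u'\tom{k-1}w$. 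For the interval, $P_{k-1}(u,w)=\{u(i):i\le k-1,\ i\text{ lies in no }c_j\}$, and replacing $\{c_1,\dots,c_n\}$ by $\{(a_{p_1},\dots,a_2,k),c_2,\dots,c_n\}$ removes exactly the index $a_1\le k-1$ from the union of the cycle supports while leaving everything else unchanged; since $u'$ agrees with $u$ away from $\{a_1,k\}$ and $u'(a_1)=u(k)=w(a_1)$, this gives $P_{k-1}(u',w)=P_{k-1}(u,w)\cup\{u(k)\}$, with $u(k)$ genuinely new since it is a value at a position larger than $k-1$; this is also consistent with the count in Lemma~\ref{lemma:numelems}.

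Uniqueness, and the nonexistence of $q'$, both come from examining how left multiplication by a transposition acts on the disjoint cycle decomposition of $u^{-1}w$ and then invoking Lemma~\ref{lemma:piericycle}. If $r<k$ has $\ell(ut_{rk})=\ell(u)+1$ (so $u(r)<u(k)$) and $ut_{rk}\tom{k-1}w$, I split on the position of $r$: if $r$ lies in no cycle, $t_{rk}$ fuses it into $c_1$ to form $(a_{p_1},\dots,a_1,r,k)$, whose ordering condition for $ut_{rk}$ would force $u(k)<u(r)$, contradicting $u(r)<u(k)$; if $r$ lies in some $c_j$ with $j\ge2$, then $t_{rk}$ merges $c_1$ and $c_j$ into one cycle containing both $k$ and the element exceeding $k$ of $c_j$, i.e.\ a cycle with two elements exceeding $k-1$; and if $r=a_m\in c_1$ with $m\ge2$, then $t_{a_mk}$ splits off the nontrivial cycle $(a_m,\dots,a_1)$, which has no element exceeding $k-1$. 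Each outcome is forbidden by Lemma~\ref{lemma:piericycle}, so $r=a_1$. The same merging argument disposes of $q'$: if $q'>k$ has $\ell(ut_{kq'})=\ell(u)+1$ and $ut_{kq'}\tom{k-1}w$, then $t_{kq'}$ either fuses $\{q'\}$ into $c_1$ or merges $c_1$ with the cycle $c_j$ ($j\ge2$) containing $q'$, and in either case the decomposition of $(ut_{kq'})^{-1}w$ acquires a cycle containing the two elements $k,q'$, both exceeding $k-1$, which Lemma~\ref{lemma:piericycle} rules out. I expect the casework on where $r$ sits relative to $c_1$ — in particular recognizing that a split at an interior node of $c_1$ strands a cycle with no large element — to be the main bookkeeping obstacle, just as it was in the proof of Lemma~\ref{lemma:pieriknotkn1}.
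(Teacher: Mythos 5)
Your proof is correct and takes essentially the same route as the paper: both arguments isolate, via Lemma \ref{lemma:piericycle}, the unique cycle of $u^{-1}w$ whose distinguished element is $k$, take $q$ to be the element adjacent to $k$ in that cycle, and eliminate every other $r<k$ and every $q'>k$ by analyzing how the transposition merges or splits the disjoint cycles. The only differences are cosmetic: in two of the uniqueness cases you derive the contradiction from the ordering (respectively, two-large-elements) condition where the paper counts cycle lengths, and you obtain $P_{k-1}(ut_{qk},w)=P_{k-1}(u,w)\cup\{u(k)\}$ directly from the cycle supports where the paper uses the cardinality count of Lemma \ref{lemma:numelems}.
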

\begin{proof}
Let $t_{a_1b_1},\ldots,t_{a_pb_p}$ be a sequence of reflections realizing $u\tom{k-1} w$, and assume $b_i\leq b_j$ whenever $i\leq j$. Since $u\ntom{k} w$, we must have that $b_i=k$ for some $i$. Let $i$ be minimal with this property and set $q=a_i$. Then $\ell(ut_{qk})=\ell(u)+1$. Let $c_1,\ldots,c_n$ be the pairwise disjoint cycles in the decomposition of $u^{-1}w$ with $c_i$ having cycle length $p_i+1$, and suppose the cycle with $q$ is $c_1$. $c_1$ then also must contain $b_i=k$. We then have that $t_{qk}c_1$ is the $p_1$-cycle we obtain by deleting $q$. We can see this by writing $c_1=(a_{p_1}',\ldots,q,k)$, then cyclically permuting to obtain $(q,k,a_{p_1}',\ldots,a_2')=(q,k)(a_{p_1}',\ldots,a_2',k)$, and multiplying this on the left by $t_{qk}$ cancels the $(q,k)$. We have that $ut_{qk}(k)=u(q)$, hence 
$$ut_{kq}(a_{p_1}')<\cdots<ut_{kq}(a_2')<ut_{qk}(k)$$
hence $ut_{qk}\tom{k-1} w$. 

To show uniqueness of $q$, let $r<k$ be such that $r\neq q$ and $\ell(ut_{rk})=\ell(u)+1$, and let the cycle $c_1$ be as in the previous paragraph. If $r$ is not in any of the cycles, then $t_{rk}c_1$ has cycle length $p_1+2$, hence the length condition is not satisfied in Lemma \ref{lemma:piericycle}, implying that $ut_{rk}\ntom{k-1} w$. A similar length argument applies when $r$ is in a cycle other than $c_1$, say $c_2$, with $t_{rk}c_1c_2$ being a single cycle that is too long. Suppose instead that $r$ is in $c_1$, say $a_j'=r$ with $j\geq 2$. Write
$$c_1=(k,a_{p_1}',\ldots,a_{j+1}',a_j')(a_j',a_{j-1}',\ldots,a_2',q)=(a_j',k)(k,a_{p_1}',\ldots,a_{j+1}')(a_j',a_{j-1}',\ldots,a_2',q)$$
The $(a_j',k)=(r,k)$ cancels in $t_{rk}c_1$ and we obtain two cycles, one of which is nontrivial (because it contains both $r$ and $q$) and contains only elements less than or equal to $k-1$, contradicting $ut_{rk}\tom{k-1} w$. Uniqueness of $q$ follows.

Now we prove that $P_{k-1}(ut_{qk},w)=P_{k-1}(u,w)\cup \{u(k)\}$. Note that $u(i)=ut_{qk}(i)$ if $i<k$ and $i\neq q$. Thus, if $u(i)\in P_{k-1}(u,w)$ and $i\neq q$, then $u(i)\in P_{k-1}(ut_{qk},w)$. Since $P_{k-1}(ut_{qk},w)$ must have exactly one more element than $P_{k-1}(u,w)$ by Lemma \ref{lemma:numelems}, this element must occur at index $q$. Thus $ut_{qk}(q)=w(q)=u(k)$, hence the claim follows. 


Suppose finally that $q'>k$ is such that $\ell(ut_{kq'})=\ell(u)+1$; we show that $ut_{kq'}\ntom{k-1} w$. This is due to the fact that, again assuming $c_1$ contains $k$, $t_{kq'}c_1$ contains both $k$ and $q'$. If in addition there is a cycle $c_2$ containing $q'$, then $t_{kq'}c_1c_2$ is again a single cycle containing both $k$ and $q'$. In either case, since both $k$ and $q'$ are larger than $k-1$, this contradicts the conditions in Lemma \ref{lemma:piericycle}, implying that $ut_{kq'}\ntom{k-1} w$. The result follows.
\end{proof}

\begin{example}
Let $u=[3,1,6,5,2,4]$, let $w=[5,3,6,1,2,4]$, and let $k=4$. We have that
$$u^{-1}w = (2,1,4)$$
Then $u\tom{k-1} w$ because $u(2)<u(1)<u(4)$, but $u\ntom{k} w$ because there is no element greater than $4$ in the cycle. We also have
$$ut_{1,4}t_{2,4}=w$$
Thus we choose $q=1$, since this is the first index such that the higher index is $k$. Then
$$ut_{qk}=[5,1,6,3,2,4]$$
and
$$t_{qk}(2,1,4)=(1,4)(1,4)(2,4)=(2,4)$$
Hence $ut_{qk}\tom{k-1} w$. We have that $P_{k-1}(u,w)=\{6\}$, and $P_{k-1}(ut_{qk},w)=\{5,6\}=P_{k-1}(u,w)\cup \{u(k)\}$.
\end{example}

\begin{lemma} \label{lemma:pieribothnot}
Suppose $u,w\in S_\infty$ and let $k>1$ be an integer. Suppose also that $u\ntom{k} w$ and $u\ntom{k-1} w$. Then there exists a $q'>k$ such that $\ell(ut_{kq'})=\ell(u)+1$ and $ut_{kq'}\tom{k-1} w$ if and only if there exists a $q<k$ such that $\ell(ut_{qk})=\ell(u)+1$ and $ut_{qk}\tom{k-1} w$. If either exists, then each is unique, and $P_{k-1}(ut_{qk},w)=P_{k-1}(ut_{kq'},w)$.
\end{lemma}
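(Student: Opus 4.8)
The plan is to work entirely through the cycle characterization of the Pieri relation in Lemma~\ref{lemma:piericycle}, following the pattern of the proofs of Lemmas~\ref{lemma:pieriknotkn1}--\ref{lemma:pierikn1notk}. Since the two implications are obtained from one another by reading the cycle surgery backwards, I would first assume that a $q<k$ with $\ell(ut_{qk})=\ell(u)+1$ and $ut_{qk}\tom{k-1}w$ exists and produce from it a $q'>k$ with $\ell(ut_{kq'})=\ell(u)+1$ and $ut_{kq'}\tom{k-1}w$, then indicate the reverse construction, and finally treat uniqueness and the equality of the $P_{k-1}$'s. Throughout I would use that $\ell(ut_{qk})=\ell(u)+1$ with $q<k$ is equivalent to $u(q)<u(k)$, and dually $\ell(ut_{kq'})=\ell(u)+1$ with $q'>k$ is equivalent to $u(k)<u(q')$.

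I would start by locating $q$ inside the disjoint cycle decomposition $c_1,\dots,c_n$ of $(ut_{qk})^{-1}w$, which realizes $ut_{qk}\tom{k-1}w$ in the sense of Lemma~\ref{lemma:piericycle}; each $c_i$ has a unique element exceeding $k-1$. Two degenerate positions must be excluded. If $q$ and $k$ were in the same cycle, that cycle's distinguished element would have to be $k$ itself, and rewriting its monotonicity chain in terms of $u$ via $ut_{qk}(q)=u(k)$ and $ut_{qk}(k)=u(q)$ would force $u(k)<u(q)$, contradicting $u(q)<u(k)$. If $q$ were in no cycle, then $w(q)=u(k)$, and pushing $t_{qk}$ through the decomposition (either a $2$-cycle $(q,k)$ appears, or $q$ is absorbed into the cycle already containing $k$) would yield a valid $\tom{k-1}$ decomposition of $u^{-1}w$ --- the new cycle stays monotone precisely because $u(q)<u(k)$ --- contradicting $u\ntom{k-1}w$. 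Hence $q$ lies in a cycle $c$ not containing $k$, whose distinguished element $b$ then satisfies $b\geq k$ and $b\neq k$, so $b>k$. I would set $q'=b$.

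To verify $q'=b$ works, write $u^{-1}w=t_{qk}c_1\cdots c_n$ and apply $t_{kb}$; since $t_{kb}t_{qk}=t_{qk}t_{qb}$, this first splits $c$ along $q,b$ into two arcs and then attaches $k$ to the arc containing $q$ (merging with the cycle that contained $k$, if any). The outcome is a decomposition of $(ut_{kb})^{-1}w$ into the untouched $c_i$, one cycle containing $k$ and $q$, and one cycle containing $b$; the monotonicity chain of $c$, translated to $u$-values using $u(q)<u(k)$, shows the two new cycles satisfy the conditions of Lemma~\ref{lemma:piericycle} with base $ut_{kb}$. As the split leaves $\sum(\text{cycle length}-1)$ unchanged, $\ell(ut_{kb},w)=\ell(ut_{qk},w)$, and with $\ell(w)=\ell(ut_{qk})+\ell(ut_{qk},w)$ this gives $\ell(ut_{kb})=\ell(u)+1$; hence $ut_{kq'}\tom{k-1}w$. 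The converse is the same surgery read backwards: from a valid $q'>k$, the element $q$ is recovered as the element $x\neq k$ of minimal $ut_{kq'}(x)$ in the cycle of $(ut_{kq'})^{-1}w$ containing $k$.

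For uniqueness, suppose $q_2\neq q$ is a second valid index below $k$. Forming $u^{-1}w$ from $(ut_{qk})^{-1}w$ as above produces a single large cycle $C$ containing both $k$ and $b$. If $q_2\notin C$, then $t_{q_2k}$ merges $C$ with the cycle of $u^{-1}w$ containing $q_2$, creating a cycle with at least two elements exceeding $k-1$, so $ut_{q_2k}\ntom{k-1}w$; if $q_2\in C$, then either $t_{q_2k}$ leaves $k$ and $b$ in one piece (the same obstruction) or the position of $q_2$ within $C$'s monotone chain forces $u(q_2)>u(k)$, contradicting $\ell(ut_{q_2k})=\ell(u)+1$. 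Uniqueness of $q'$ follows by the mirrored argument. Finally, since $t_{kq'}$ only moves positions $k,q'>k-1$, we have $P_{k-1}(ut_{kq'},w)=\{u(i)\mid i\leq k-1,\ u(i)=w(i)\}$, while $t_{qk}$ moves only $q,k$ and $q\leq k-1$, so the two sets can differ at position $q$ at most; but $u(q)\neq w(q)$ (else $q$ and $k$ would share a cycle of $(ut_{qk})^{-1}w$, already excluded) and $ut_{qk}(q)=u(k)\neq w(q)$ (as $q$ does lie in a cycle), so position $q$ contributes to neither, and $P_{k-1}(ut_{qk},w)=P_{k-1}(ut_{kq'},w)$. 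I expect the genuine difficulty to be the cycle-surgery bookkeeping in the middle steps --- keeping the monotonicity inequalities straight while translating among the bases $u$, $ut_{qk}$, and $ut_{kq'}$ --- and making the ``read backwards'' converse fully rigorous rather than invoking symmetry.
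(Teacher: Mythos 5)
Your overall strategy is the paper's own: everything runs through Lemma~\ref{lemma:piericycle} and surgery on the disjoint cycle decompositions of $(ut_{qk})^{-1}w$ and $(ut_{kq'})^{-1}w$, and your forward direction (excluding the cases where $q$ shares a cycle with $k$ or lies in no cycle, then taking $q'$ to be the distinguished element of the cycle containing $q$) as well as your argument that $P_{k-1}(ut_{qk},w)=P_{k-1}(ut_{kq'},w)$ are sound and essentially match the paper. But there is a genuine gap: the converse implication (from a valid $q'>k$ produce $q<k$) is not actually carried out --- it is dismissed as ``the same surgery read backwards'' --- and the explicit recovery recipe you give for it is wrong. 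You claim $q$ is ``the element $x\neq k$ of minimal $ut_{kq'}(x)$ in the cycle of $(ut_{kq'})^{-1}w$ containing $k$.'' Take $u=[1,2,3,4,5]$, $w=[2,4,1,3,5]$, $k=3$, $q'=4$: then $u\ntom{3}w$, $u\ntom{2}w$, $\ell(ut_{34})=\ell(u)+1$, and $(ut_{34})^{-1}w=(1,2,3)$, so $ut_{34}\tom{2}w$. Your recipe selects $q=1$, but $\ell(ut_{13})=\ell(u)+3$, so it fails even the length hypothesis; the unique valid choice is $q=2$. The correct selection, as in the paper, is the element $a_j$ of the cycle containing $k$ with $u(a_j)<u(k)$ and $u(a_j)$ \emph{maximal} among such (equivalently, minimal $j$ in the chain $u(a_{p_1})<\cdots<u(a_1)$ with $u(a_j)<u(k)$), and verifying it works requires the case analysis on whether $q'$ itself lies in a cycle of $(ut_{kq'})^{-1}w$, together with the observation that $u\ntom{k}w$ forces $u(k)>u(a_{p_1})$ so that such a $j$ exists. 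Since your uniqueness argument for $q$ also leans on knowing where the valid $q$ sits inside the cycle, this gap propagates there as well.

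A smaller but real issue is the length bookkeeping in the forward direction: from ``the split leaves $\sum(\text{cycle length}-1)$ unchanged'' you cannot conclude $\ell(ut_{kq'},w)=\ell(ut_{qk},w)$ and hence $\ell(ut_{kq'})=\ell(u)+1$; that reasoning presupposes the very length condition of Lemma~\ref{lemma:piericycle} you are trying to verify. What you must check is $u(k)<u(q')$ directly, which does follow from the monotone chain of the cycle containing $q$ (since $u(k)=ut_{qk}(a_j)\leq ut_{qk}(a_1)<ut_{qk}(q')=u(q')$), and only then does Lemma~\ref{lemma:piericycle} deliver both $\ell(ut_{kq'})=\ell(u)+1$ and $ut_{kq'}\tom{k-1}w$.
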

\begin{proof}
Let $q'>k$ be such that $\ell(ut_{kq'})=\ell(u)+1$ and $ut_{kq'}\tom{k-1} w$. Suppose also that $u\ntom{k} w$. We find a $q<k$ with $\ell(ut_{qk})=\ell(u)+1$ and $ut_{qk}\tom{k-1} w$ and afterwards show that $q'$ is unique. Let $c_1,\ldots,c_n$ be the pairwise disjoint cycles in the decomposition of $(ut_{kq'})^{-1}w$, with $c_i$ of cycle length $p_i+1$. We claim that there must be some such cycle that contains $k$. If neither $k$ nor $q'$ were present, then we would have that $u\tom{k} w$ by adding this disjoint cycle $(k,q')$ to the decomposition of $(ut_{kq'})^{-1}w$, which we assumed is not the case. The same is true if $q'$ is present but not $k$. To see this, say $c_2$ is the $(p_2+1)$-cycle containing $q'$. Then $t_{kq'}c_2$ is a ($p_2+2$)-cycle satisfying the conditions of Lemma \ref{lemma:piericycle} for $u$, $w$, and $k$ (which we can see from the fact that this is a sequence of reflections of the form $t_{aq'}$, each increasing the length by $1$ and the lower indices being pairwise distinct), and we assumed that $u\ntom{k} w$.

In the case where $k$ is present but not $q'$, say in the cycle $c_1$, suppose
$$c_1=(a_{p_1},\ldots,a_1,k)$$
Then we have
$$t_{kq'}c_1=(q',k)(k,a_{p_1},\ldots,a_1)=(q',k,a_{p_1},\ldots,a_1)=(k,a_{p_1},\ldots,a_1,q')$$
If $u(k)<u(a_{p_1})$, then $u\tom{k} w$, which we assumed is not the case. Therefore $u(k)>u(a_{p_1})$. Choose the minimal $j$ such that $u(a_j)<u(k)$, and write $t_{kq'}c_1$ as
$$(k,a_{p_1},\ldots,a_j)(a_j,\ldots,q')=(a_j,k)(k,a_{p_1},\ldots,a_{j+1})(a_j,\ldots,q')=(a_j,k)(a_{p_1},\ldots,a_{j+1},k)(a_j,\ldots,q')$$
and set $q=a_j$. Then $ut_{qk}\tom{k-1} w$ because $ut_{qk}(a_{p_1})<\cdots<ut_{qk}(k)=ut_{kq'}(a_j)$, and if $j>1$ then 
$$ut_{qk}(a_j)=u(k)<u(a_{j-1})=ut_{qk}(a_{j-1})<\cdots<ut_{qk}(q')\mathrm{,}$$
whereas if $j=1$ then $ut_{qk}(q)=u(k)<u(q')=ut_{qk}(q')$ since $\ell(ut_{kq'})=\ell(u)+1$.

Finally consider the case where both $k$ and $q'$ are present. Suppose
$$c_1=(a_{p_1},\ldots,a_1,k)$$
and
$$c_2=(a_{p_2}',\ldots,a_1',q')$$
Then
$$t_{kq'}c_1c_2=(k,a_{p_1},\ldots,a_1,q')(q',a_{p_2}',\ldots,a_1')=(k,a_{p_1},\ldots,a_1,q',a_{p_2}',\ldots,a_1')$$
Since $u\ntom{k} w$, we have that $u(a_1')>u(k)$ or $u(k)>u(a_{p_1})$ by the conditions in Lemma \ref{lemma:piericycle}, and since $u(k)=ut_{kq'}(q')>ut_{kq'}(a_1')$ given that $ut_{kq'}\tom{k-1} w$ it follows that $u(k)>u(a_{p_1})$. Choose the minimal $j$ such that $u(a_j)<u(k)$. Then we obtain
$$(k,a_{p_1},\ldots,a_{j+1},a_j)(a_j,\ldots,a_1,q',a_{p_2}',\ldots,a_1')=(a_j,k)(k,a_{p_1},\ldots,a_{j+1})(a_{p_2}',\ldots,a_1',a_j,\ldots,a_1,q')$$
and we may set $q=a_j$ as before, since then
$$ut_{qk}(a_1')=ut_{kq'}(a_1')<ut_{kq'}(q')=u(k)=ut_{qk}(a_j)$$
and if $j>1$ then
$$ut_{qk}(a_j)=u(k)<ut_{kq'}(a_{j-1})=ut_{qk}(a_{j-1})$$ 
whereas if $j=1$ then
$$ut_{qk}(a_1)=u(k)<u(q')=ut_{qk}(q')$$
since $\ell(ut_{kq'})=\ell(u)+1$, so that $ut_{qk}\tom{k-1} w$. Thus, given such a $q'$, we have identified an integer $q<k$ such that $\ell(ut_{qk})=\ell(u)+1$ and $ut_{qk}\tom{k-1} w$. To see that $q'$ is unique, we note that $q'$ is the unique element contained in the cycle containing $k$ in the disjoint cycle decomposition of $u^{-1}w$ that is larger than $k$.

We show now that if $u\ntom{k-1} w$, then if $q<k$ is such that $\ell(ut_{qk})=\ell(u)+1$ and $ut_{qk}\tom{k-1} w$ then there exists a $q'>k$ such that $\ell(ut_{kq'})=\ell(u)+1$ and $ut_{kq'}\tom{k-1} w$, and we show that $q$ is unique assuming additionally that $u\ntom{k} w$. Let $q<k$ be such that $\ell(ut_{qk})=\ell(u)+1$ and $ut_{qk}\tom{k-1} w$, and assume $u\ntom{k-1} w$. Let the pairwise disjoint cycles in the decomposition of $(ut_{qk})^{-1}w$ be $c_1,\ldots,c_n$, with $c_i$ having length $p_i+1$ (note that we have redefined the cycles here, and they are not the same as the previous paragraph). If there were no cycle that contained $q$, then we claim that we would then have that $u\tom{k-1} w$, which we assumed is not the case. To see this, suppose no cycle contains $q$ and no cycle contains $k$. Then $t_{qk}$ is a free cycle in the decomposition of $u^{-1}w$ and its existence implies $u\tom{k-1}w$ by Lemma \ref{lemma:piericycle}. If instead no cycle contained $q$ but some cycle contained $k$, then say this cycle containing $k$ is $c_2$. Then $t_{qk}c_2$ can be written as a product of reflections of the form $t_{ak}$ for pairwise distinct choices of $a$ that are less than $k$, and we can deduce that this forms a $(p_2+2)$-cycle $t_{qk}c_2$ that together with $c_1,c_3,\ldots,c_n$ imply that $u\tom{k-1} w$ by Lemma \ref{lemma:piericycle}. Thus as claimed some cycle must contain $q$.

Assume without loss of generality that $c_1$ is the cycle containing $q$. Then $c_1$ cannot contain $k$ because $ut_{qk}(q)>ut_{qk}(k)$. Thus assume 
$$c_1=(a_{p_1},\ldots,a_1,q')$$
with $q'>k$ and let $j$ be such that $a_j=q$. We have
\begin{align*}
 t_{qk}c_1&=t_{qk}(a_{j},\ldots,q')(q',a_{p_1},\ldots,a_{j+1})\\
          &=(k,a_j,\ldots,a_1,q')(q',a_{p_1},\ldots,a_{j+1})\\
					&=(k,q')(a_j,\ldots,a_1,k)(a_{p_1},\ldots,a_{j+1},q')\\
					&=t_{kq'}(a_j,\ldots,a_1,k)(a_{p_1},\ldots,a_{j+1},q')
\end{align*}
Since $ut_{qk}(a_1)<ut_{qk}(q')$, we have that $ut_{kq'}(a_1)<ut_{kq'}(k)$ if $j>1$, and in that case
$$ut_{kq'}(a_j)=u(q)<u(k)=ut_{qk}(a_j)<ut_{qk}(a_{j-1})=ut_{kq'}(a_{j-1})$$
We also have, if $j=1$,
$$ut_{kq'}(a_1)=u(q)<u(k)<u(q')=ut_{kq'}(k)$$
Thus
$$ut_{kq'}(a_j)<\cdots<ut_{kq'}(a_1)<ut_{kq'}(k)$$
so that the first cycle is valid for $ut_{kq'}\tom{k-1} w$. Since $ut_{qk}(a_{j+1})<ut_{qk}(q)=u(k)$, we have that $ut_{kq'}(a_{j+1})<ut_{kq'}(q')=u(k)$. If there is no other $c_i$ containing $k$, then we obtain that $\ell(ut_{kq'})=\ell(u)+1$ and $ut_{kq'}\tom{k-1} w$ by Lemma \ref{lemma:piericycle}. If there exists a nontrivial cycle containing $k$, say $c_2=(a_{p_2}',\ldots,a_1',k)$, then $t_{qk}c_1$ combines with $c_2$ to obtain
\begin{align*}
t_{qk}c_1c_2&=t_{kq'}(a_j,\ldots,a_1,k,a_{p_2}',\ldots,a_1')(a_{p_1},\ldots,a_{j+1},q')\\
             &=t_{kq'}(a_{p_2}',\ldots,a_1',a_j,\ldots,a_1,k)(a_{p_1},\ldots,a_{j+1},q')\mathrm{.}
\end{align*}						
Since $ut_{qk}(a_1')<ut_{qk}(k)$, we have $ut_{kq'}(a_1')<ut_{kq'}(q)=ut_{kq'}(a_j)$, hence $ut_{kq'}\tom{k-1} w$. Hence, given $q$, we have identified a $q'>k$ such that $\ell(ut_{kq'})=\ell(u)+1$ and $ut_{kq'}\tom{k-1} w$. 

Assume now that $u\ntom{k} w$; in that case, we know that the $q'$ we have identified is unique. Since 
$$u(k)=ut_{qk}(a_j)<ut_{qk}(a_{j-1})=u(a_{j-1})$$ 
we have that $q$ is the rightmost element in the cycle containing $k$, with $k$ held fixed as the rightmost element of the cycle, in the disjoint cycle decomposition of $(ut_{kq'})^{-1}w$ such that $u(q)<u(k)$, hence $q$ is unique. Thus, if $u\ntom{k} w$ and $u\ntom{k-1}w$, then $q'$ exists if and only if $q$ does, and each is unique, as desired.

For the last part we show that $P_{k-1}(ut_{qk},w)=P_{k-1}(ut_{kq'},w)$ if $q$ and $q'$ exist. We have that $ut_{qk}(i)=ut_{kq'}(i)$ if $i\neq q$ and $i<k$, and the cardinalities of $P_{k-1}(ut_{qk},w)$ and $P_{k-1}(ut_{kq'},w)$ are the same by Lemma \ref{lemma:numelems}. If $ut_{qk}(q)\in P_{k-1}(ut_{qk},w)$, then $ut_{kq'}(q)\notin P_{k-1}(ut_{kq'},w)$, and vice versa, because the values at these indices are not equal and hence they cannot both be equal to $w(q)$. Hence, neither $ut_{qk}(q)$ nor $ut_{kq'}(q)$ is equal to $w(q)$, for if one of them were then one of the two sets $P_{k-1}(ut_{qk},w)$ and $P_{k-1}(ut_{kq'},w)$ would have more elements than the other. We must therefore have that $P_{k-1}(ut_{qk},w)=P_{k-1}(ut_{kq'},w)$, as stated.
\end{proof}

\begin{example}
Let $u=[3,1,2,5,6,4]$, let $w=[5,1,6,2,3,4]$, and let $k=4$. We have that
$$u^{-1}w=(1,4,3,5)$$
Thus $u\ntom{k} w$ because $u(3)<u(4)$, and $u\ntom{k-1} w$ because both $4$ and $5$ are present in the cycle. However, if we let $q'=5$, then
$$ut_{kq'}=[3,1,2,6,5,4]$$
and
$$(ut_{kq'})^{-1}w=(1,5)(3,4)$$
and hence $ut_{kq'}\tom{k-1} w$. Since $u^{-1}w=(1,4,3,5)$, this is the case where both $k$ and $q'=5$ are present. Thus we should be able to pick $q=3$ to have $ut_{qk}\tom{k-1} w$, since this is the minimal (only) element in the cycle after $k$ such that $u(q)<u(k)$. Calculating this, we get
$$ut_{qk}=[3,1,5,2,6,4]$$
and
$$(ut_{qk})^{-1}w=(1,3,5)$$
Thus $ut_{qk}\tom{k-1} w$. One can check that this is the only index that satisfies the condition. Going in the opposite direction, since $(ut_{qk})^{-1}w=(1,3,5)$ we would have picked $q'=5$ via the proof. Additionally note that $P_{k-1}(ut_{qk},w)=\{1\}=P_{k-1}(ut_{kq'},w)$.
\end{example}

\begin{proof}[Proof of Proposition \ref{proposition:minipieri}]
We are trying to prove that 
$$\sch_u(x;y)E_k(x;z_j)=\sum_{u\tom{k} w}\mathrm{weight}_{u,k}^w(y;z_j)\sch_w(x;y)$$
We note that
$$\mathrm{weight}_{u,k}^w(y;z_j)=E_{k-\ell(u,w)}(y_{P_k(u,w)};z_j)=\prod_{i\in P_k(u,w)}(y_i-z_j)$$
so, re-expressing the desired result, we want to prove that
$$\sch_u(x;y)E_k(x;z_j)=\sum_{u\tom{k} w}\left(\prod_{i\in P_k(u,w)}(y_i-z_j)\right)\sch_w(x;y)$$
We prove this by induction on $k$. For $k=1$, the result is covered by Lemma \ref{lemma:singlevariable}, where there are no negative terms. Otherwise, assume the result holds for $k-1$. Note that
$$E_k(x;z_j)=E_{k-1}(x;z_j)(x_k-z_j)$$
We apply Lemma \ref{lemma:singlevariable} to multiply $\sch_u(x;y)$ by $x_k-z_j$. Namely, the computation of $\sch_u(x;y)E_k(x;z_j)$ becomes
$$\sch_u(x;y)(x_k-z_j)E_{k-1}(x;z_j)=(y_{u(k)}-z_j)\sch_u(x;y)E_{k-1}(x;z_j)+\sum_{\ell(ut_{kq})=\ell(u)+1}\mathrm{sign}(q-k)\sch_{ut_{kq}}(x;y)E_{k-1}(x;z_j)$$
Applying the induction hypothesis to expand the terms on the right hand side by multiplying the double Schubert polynomial by the factor $E_{k-1}(x;z_j)$, we obtain
\begin{equation} \label{equation:first}
\sum_{w:u\tom{k-1} w}(y_{u(k)}-z_j)\left(\prod_{i\in P_{k-1}(u,w)}(y_{i}-z_j)\right)\sch_w(x;y)
\end{equation}
as well as
\begin{equation} \label{equation:new}
\sum_{\substack{q'>k\\\ell(ut_{kq'})=\ell(u)+1}}\sum_{w:ut_{kq'}\tom{k-1} w}\left(\prod_{i\in P_{k-1}(ut_{kq'},w)}(y_{i}-z_j)\right)\sch_{w}(x;y)
\end{equation}
and
\begin{equation} \label{equation:neg}
-\sum_{\substack{q<k\\\ell(ut_{qk})=\ell(u)+1}}\sum_{w:ut_{qk}\tom{k-1} w}\left(\prod_{i\in P_{k-1}(ut_{qk},w)}(y_{i}-z_j)\right)\sch_{w}(x;y)
\end{equation}
The terms we want to eliminate are as follows
\begin{enumerate}
\item \label{item:badw} Terms in (\ref{equation:first}) such that $w(k)\neq u(k)$.
\item \label{item:extraw} Terms in (\ref{equation:new}) such that $u\ntom{k} w$.
\item \label{item:allneg} All terms in (\ref{equation:neg}).
\end{enumerate}
For all arguments here we assume that $\ell(ut_{ab})=\ell(u)+1$ for all reflections $t_{ab}$ we consider. For each $w$ occurring in a term of type (\ref{item:badw}), we identify a term in the sum (\ref{equation:neg}) that cancels it. Assume then that $w$ is such that $u\tom{k-1} w$ but $u(k)\neq w(k)$. Then $u(k)>w(k)$ by Lemma \ref{lemma:nokm1k}; by the same lemma, this makes it impossible that $u\tom{k} w$.  By Lemma \ref{lemma:pierikn1notk}, there exists a unique $q<k$ such that $ut_{qk}\tom{k-1} w$. Since $P_{k-1}(ut_{qk},w)=P_{k-1}(u,w)\cup \{u(k)\}$ by the same lemma, the term corresponding to $w$ in (\ref{equation:neg}) corresponding to the permutation $ut_{qk}$ cancels the term corresponding to $w$ in (\ref{equation:first}), applying the induction hypothesis. Thus for each term of type (\ref{item:badw}) we have identified a unique term in (\ref{equation:neg}) that cancels it.

Let $q'>k$ be a positive integer and let $w$ be an index such that there is a term corresponding to $ut_{kq'}$ and $w$ of type (\ref{item:extraw}) in (\ref{equation:new}), meaning that $u\ntom{k} w$. By definition we have that $ut_{kq'}\tom{k-1} w$. We cannot have that $u\tom{k-1} w$ by Lemma \ref{lemma:pierikn1notk} since the existence of the index $q'>k$ contradicts the conclusion of the lemma, so we have that $u\ntom{k} w$, $u\ntom{k-1} w$, and $q'>k$ is such that $ut_{kq'}\tom{k-1} w$. Therefore, by Lemma \ref{lemma:pieribothnot}, $q'$ is unique and there is a unique $q<k$ such that $ut_{qk}\tom{k-1}w$. By the same lemma, $P_{k-1}(ut_{qk},w)=P_{k-1}(ut_{kq'},w)$, so the two terms corresponding to these indices have the same weight with opposite signs by the induction hypothesis, hence they cancel. Thus all terms of type (\ref{item:extraw}) are canceled from the sum with a term from (\ref{equation:neg}) that was not canceled in the previous paragraph by a term of type (\ref{item:badw}) since $u\ntom{k-1} w$.

We claim that at this point all terms in (\ref{equation:neg}) have been canceled. Fix $w$. Note first that if there is a $q<k$ such that $ut_{qk}\tom{k-1} w$, then by Lemma \ref{lemma:pieritogether} we cannot have that both $u\tom{k}w$ and $u\tom{k-1}w$. Thus either $u\ntom{k-1} w$ or $u\ntom{k} w$. In the case where both are true we identified a unique corresponding $q'>k$ such that $ut_{kq'}\tom{k-1} w$ and we showed that the corresponding term has the same coefficient with the opposite sign. Thus all terms with $u\ntom{k-1} w$ and $u\ntom{k} w$ have been canceled. If $u\ntom{k-1} w$ but $u\tom{k} w$, by Lemma \ref{lemma:pieriknotkn1} $\sch_w(x;y)$ has a coefficient of $0$ in (\ref{equation:neg}). Finally, suppose $u\tom{k-1} w$ but $u\ntom{k}w$. Then the $q<k$ corresponding to the term in the sum such that $ut_{qk}\tom{k-1} w$ is the unique index with this property by Lemma \ref{lemma:pierikn1notk}, and as argued previously it has been canceled. The claim follows.

At this point the only terms that remain are those such that $u\tom{k} w$ in (\ref{equation:first}) and (\ref{equation:new}). The weight in (\ref{equation:first}) is correct by the induction hypothesis because we canceled the terms with $u(k)\neq w(k)$, and for (\ref{equation:new}), fixing $q'>k$, since $ut_{kq'}$ differs from $u$ only at $k$ and $q'$ we have that $P_k(u,w)=P_{k-1}(ut_{kq'},w)$. By Lemma \ref{lemma:pieritogether}, no $w$ that occurs in (\ref{equation:first}) also occurs in (\ref{equation:new}). By Lemma \ref{lemma:pieriknotkn1}, if $u\ntom{k-1} w$ and $u\tom{k} w$ there is exactly one $q'>k$ such that $ut_{kq'}\tom{k-1} w$, so all $\sch_w(x;y)$ such that $u\tom{k} w$ occur exactly once and we have the result.
\end{proof}

\begin{proof}[Proof of Theorem \ref{theorem:pieri}]
This follows from Proposition \ref{proposition:minipieri} coupled with the Cauchy formula for double Schubert polynomials \cite{notes}. Let $e_{i,k}(x)$ be the usual elementary symmetric polynomial $e_i(x_1,\ldots,x_k)$ and let $h_{i,k}(x)=h_i(x_1,\ldots,x_k)$ be the complete homogeneous symmetric polynomial of degree $i$ in $x_1,\ldots,x_k$. Note that when we expand the product in the definition of $E_k(x;z_1)$ we obtain
$$E_k(x;z_1)=\sum_{i=0}^k e_{i,k}(x)(-z_1)^{k-i}$$
By Proposition \ref{proposition:minipieri}, we have
\begin{align*}
\sum_{i=0}^k\sch_u(x;y)e_{i,k}(x)(-z_1)^{k-i}&=\sch_u(x;y)E_k(x;z_1)\\
&=\sum_{u\tom{k} w}E_{k-\ell(u,w)}(y_{P_k(u,w)};z)\sch_w(x;y)\\
&=\sum_{u\tom{k} w}\sum_{i=\ell(u,w)}^k e_{i-\ell(u,w),k-\ell(u,w)}(y_{P_k(u,w)})\sch_w(x;y)(-z_1)^{(k-\ell(u,w))-(i-\ell(u,w))}
\end{align*}
Equating the coefficients of powers of $-z_1$ in the above equation we obtain that
$$\sch_u(x;y)e_{i,k}(x)=\sum_{u\tom{k} w}{e_{i-\ell(u,w),k-\ell(u,w)}(y_{P_k(u,w)})\sch_w(x;y)}$$
By the Cauchy formula in \cite{notes} we have that
$$E_{p,k}(x;z)=\sum_{i=0}^p e_{i,k}(x)h_{p-i,k+1-p}(-z)$$
Therefore the coefficient of $\sch_w(x;y)$ in $\sch_u(x;y)E_{p,k}(x;z)$ is, replacing the index $i$ with $j+\ell(u,w)$,
$$\sum_{j=0}^{p-\ell(u,w)} e_{j,k-\ell(u,w)}(y_{P_k(u,w)})h_{p-\ell(u,w)-j,(k-\ell(u,w))+1-(p-\ell(u,w))}(-z)$$
which is exactly $E_{p-\ell(u,w),k-\ell(u,w)}(y_{P_k(u,w)};z)$, and we have the result.
\end{proof}

Via the following formula, our Pieri formula is also positive in the sense of Conjecture \ref{conjecture:positive} as well as the equivariant specialization.

\begin{proposition} \label{proposition:elemformula}
Let $p,k$ with $1\leq p\leq k$ be integers. Let $\mathcal{S}_{p,k}$ be the set of sequences $(a_0,\ldots,a_{p-1})$ such that $1\leq a_i\leq k+1-p$ for all $i$ and $a_i\leq a_{i+1}$ for all $0\leq i<p-1$. Then
$$E_{p,k}(x;z)=\sum_{(a_0,\ldots,a_{p-1})\in\mathcal{S}_{p,k}}\prod_{i=0}^{p-1}(x_{a_i+i}-z_{a_i})$$
\end{proposition}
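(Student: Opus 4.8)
The plan is to read the formula directly off the combinatorial (semistandard tableau) formula for factorial Schur polynomials, \cite[(4)]{molev1999littlewood}, specialized to the one-column shape $(1^p)$, and then reindex the summation. Recall that $E_{p,k}(x;z)=s_{(1^p)}(x_1,\ldots,x_k;z)$. The tableau formula says that for a partition $\mu$,
$$s_\mu(x_1,\ldots,x_n;z)=\sum_{T}\ \prod_{(i,j)\in\mu}(x_{T(i,j)}-z_{T(i,j)+j-i}),$$
the sum running over all column-strict fillings $T$ of the Young diagram of $\mu$ with entries in $\{1,\ldots,n\}$. Taking $\mu=(1^p)$ and $n=k$, such a filling of the single column of length $p$ is exactly a strictly increasing sequence $1\le b_1<b_2<\cdots<b_p\le k$ with $b_i=T(i,1)$, and the box $(i,1)$ has content $j-i=1-i$, so the formula becomes
$$E_{p,k}(x;z)=\sum_{1\le b_1<\cdots<b_p\le k}\ \prod_{i=1}^p(x_{b_i}-z_{b_i-i+1}).$$

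Next I would exhibit the bijection between the two indexing sets. Given a strictly increasing sequence $(b_1,\ldots,b_p)$ as above, set $a_{i-1}=b_i-(i-1)$ for $1\le i\le p$. Because $b_{i+1}\ge b_i+1$, the sequence $(a_0,\ldots,a_{p-1})$ is weakly increasing; moreover $a_0=b_1\ge 1$ and $a_{p-1}=b_p-(p-1)\le k-(p-1)=k+1-p$, so $(a_0,\ldots,a_{p-1})\in\mathcal{S}_{p,k}$. Conversely $b_i=a_{i-1}+(i-1)$ recovers a strictly increasing sequence in $\{1,\ldots,k\}$ from any element of $\mathcal{S}_{p,k}$, so the assignment is a bijection. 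Under it the $i$-th factor transforms as $x_{b_i}-z_{b_i-i+1}=x_{a_{i-1}+(i-1)}-z_{a_{i-1}}$, and reindexing the product by $j=i-1$ turns it into $\prod_{j=0}^{p-1}(x_{a_j+j}-z_{a_j})$, exactly the summand in the statement. Transporting the sum over the bijection yields the asserted identity.

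The one point that genuinely needs care --- and the main, if mild, obstacle --- is to confirm that the paper's normalization of $s_\mu(x_1,\ldots,x_n;z)$ (no shift of the coefficient-variable indices, content taken as $j-i$) is the convention under which \cite[(4)]{molev1999littlewood} holds in the form used above. This can be pinned down from the special cases already recorded in the text, for instance $E_{1,2}(x;z)=\sch_{s_2}(x;z)=(x_1-z_1)+(x_2-z_2)$ and, for $p=k$, $E_{k,k}(x;z)=\prod_{i=1}^k(x_i-z_1)$ (where $\mathcal{S}_{k,k}=\{(1,\ldots,1)\}$): these fix both the identification of the Molev--Sagan coefficient sequence with $(z_1,z_2,\ldots)$ and the sign of the content shift, after which the displays above are forced. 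A self-contained alternative avoiding \cite{molev1999littlewood} is to induct on $k$ via the branching recursion $E_{p,k}(x;z)=E_{p,k-1}(x;z)+(x_k-z_{k+1-p})E_{p-1,k-1}(x;z)$, whose right-hand side matches the split of $\mathcal{S}_{p,k}$ according to whether $a_{p-1}$ attains its maximum $k+1-p$; but establishing that recursion itself takes comparable effort, so the tableau route is preferable.
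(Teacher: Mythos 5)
Your proposal follows essentially the same route as the paper: both invoke the tableau formula \cite[(4)]{molev1999littlewood} for the one-column shape $(1^p)$, where the fillings are strictly increasing sequences and the boxes in column $1$, row $r$ contribute $x_{T(r,1)}-z_{T(r,1)+1-r}$. Your explicit reindexing bijection $a_{i-1}=b_i-(i-1)$ (left implicit in the paper) and the sanity checks against $E_{k,k}$ and $E_{1,2}$ are correct and only make the same argument more detailed.
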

\begin{proof}
This follows from the formula \cite[(4)]{molev1999littlewood}. A term in a factorial Schur function is written as the product of linear factors $x_{T(\alpha)}-z_{T(\alpha)+c-r}$, where $T$ is a tableau, $\alpha$ ranges over all boxes, and $\alpha$ is in column $c$ and row $r$. For $E_{p,k}(x;z)$, this tableau is on a single column, so that $c=1$ and $1\leq r\leq p$, and $T$  is strictly increasing along the rows.
\end{proof}

\begin{example}
We select some of the coefficients in Example \ref{example:pieriproduct} to see that they are nonnegative when substituting $z=y$.
\begin{align*}
E_{3,4}(y_1,y_3,y_4,y_5;z)&=(y_1-z_1)(y_3-z_1)(y_4-z_1)+(y_1-z_1)(y_3-z_1)(y_5-z_2)\\
                          &+(y_1-z_1)(y_4-z_2)(y_5-z_2)+(y_3-z_2)(y_4-z_2)(y_5-z_2)\\
&\Rightarrow(y_3-y_2)(y_4-y_2)(y_5-y_2)\\
E_{2,3}(y_3,y_4,y_5;z)&=(y_3-z_1)(y_4-z_1)+(y_3-z_1)(y_5-z_2)+(y_4-z_2)(y_5-z_2)\\
&\Rightarrow(y_3-y_1)(y_4-y_1)+(y_3-y_1)(y_5-y_2)+(y_4-y_2)(y_5-y_2)\\
E_{1,2}(y_1,y_3;z)&=(y_1-z_1)+(y_3-z_2)\\
&\Rightarrow y_3-y_2
\end{align*}
\end{example}

\section{A more general result} \label{section:moregeneral}

We note that in Theorem \ref{theorem:main} the value of $p$ is unimportant as long as the condition is satisfied; the computation of the coefficient $e_{uv}^w(y;z)$ has nothing to do with $p$. Indeed, Theorem \ref{theorem:main} is not the most general possible result that comes out of this method. The most general result is the following.
\begin{theorem} \label{theorem:moregeneral}
Let $u,v\in S_\infty$ and suppose $v^{-1}\monom{v} = s_{i_1}\cdots s_{i_m}$ with $\ell(us_{i_j})>\ell(u)$ for all $j$. Then
$$\sch_u(x;y)\sch_v(x;z)=\sum_{w\in S_\infty} e_{uv}^w(y;z)\sch_w(x;y)$$
\end{theorem}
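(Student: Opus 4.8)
The plan is to notice that the separated-descents hypothesis of Theorem~\ref{theorem:main} enters the argument in exactly one place: the proof of Proposition~\ref{proposition:premain}, where it is used only to guarantee that the first reflection $s_i$ produced by the dominant-approximation recursion satisfies $\ell(us_i)>\ell(u)$. Nothing afterward---Proposition~\ref{proposition:mulmonomial} (which carries no hypothesis on $u,v$) and the assembly at the end of the proof of Theorem~\ref{theorem:main}---refers to $u$ and $v$ except through the output of Proposition~\ref{proposition:premain}. So the whole proof of Theorem~\ref{theorem:moregeneral} consists of establishing the analogue of Proposition~\ref{proposition:premain} under the present hypothesis and then combining it with Proposition~\ref{proposition:mulmonomial} verbatim as in the proof of Theorem~\ref{theorem:main}.

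Since the set of simple reflections occurring in a reduced word of $v^{-1}\monom v$ is independent of the word, we may assume the reduced word $v^{-1}\monom v=s_{i_1}\cdots s_{i_m}$ in the hypothesis is the one generated by the recursion of Definition~\ref{definition:permstuff}. I would then prove the generalized Proposition~\ref{proposition:premain} by induction on $\ell(\monom v)-\ell(v)$, exactly as before. The base case $v=\monom v$ is unchanged. If $v$ is not dominant, let $i$ be the maximal index with $\code_i(v)<\code_{i+1}(v)$; this index equals $i_1$, the recursion always increases length so $\ell(vs_i)>\ell(v)$, and in place of the step ``$i<p$, hence $\ell(us_i)>\ell(u)$'' of the original proof we simply invoke the hypothesis, which says $s_i=s_{i_1}$ is an ascent of $u$. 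With $\ell(us_i)>\ell(u)$ and $\ell(vs_i)>\ell(v)$ in hand, the split into $\ell(ws_i)<\ell(w)$ and $\ell(ws_i)>\ell(w)$ goes through word for word: in the first case Lemma~\ref{lemma:nodescentzero} gives $c_{uv}^w(y;z)=0$ while the shape of the recursion word forces $\ell(wv^{-1}\monom v)<\ell(w)+\ell(v^{-1}\monom v)$; in the second, Lemma~\ref{lemma:descentcoeffdown} gives $c_{uv}^w(y;z)=c_{u,vs_i}^{ws_i}(y;z)$ and one applies the induction hypothesis to $(u,vs_i)$.

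The one thing that must be checked is that $(u,vs_i)$ again satisfies the hypothesis of Theorem~\ref{theorem:moregeneral}, so that the induction is legitimate. As $s_i=s_{i_1}$ is the first reflection of the recursion, $\monom{vs_i}=\monom v$ and $\ell(\monom{vs_i})-\ell(vs_i)=\ell(\monom v)-\ell(v)-1$, and
\[
(vs_i)^{-1}\monom{vs_i}=(vs_i)^{-1}\monom v=s_i\,v^{-1}\monom v=s_{i_2}\cdots s_{i_m},
\]
a reduced word whose letters are among $i_1,\dots,i_m$---all ascents of $u$ by hypothesis. Hence the hypothesis is inherited and the induction closes. This also shows why Theorem~\ref{theorem:main} is the stated special case: if $\ell(us_i)>\ell(u)$ for $i<p$ and $\ell(vs_i)>\ell(v)$ for $i>p$, then $\code_j(v)=0$ for all $j>p$, a property preserved at each step of the recursion, so every reflection the recursion applies has index strictly less than $p$ and is automatically an ascent of $u$.

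I do not expect a genuine obstacle: the substantive combinatorics is already contained in the proof of Theorem~\ref{theorem:main}, and the present statement only reformulates its hypothesis in the weakest form the induction can use. The single point deserving care is confirming that the $\ell(ws_i)<\ell(w)$ branch of Proposition~\ref{proposition:premain}---the claim that, because the recursion word for $v^{-1}\monom v$ begins with $s_i$ and right multiplication by $s_i$ lowers the length of $w$, the product $wv^{-1}\monom v$ cannot attain length $\ell(w)+\ell(v^{-1}\monom v)$---uses only that this is a fixed reduced word starting with $s_i$, with no descent condition hidden in it; a rereading confirms that it does.
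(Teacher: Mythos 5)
Your proposal is correct and matches what the paper intends: the paper omits the proof, remarking only that it is ``virtually identical'' to that of Theorem~\ref{theorem:main}, and your argument fleshes this out in exactly the expected way---rerunning the induction of Proposition~\ref{proposition:premain} with the hypothesis $\ell(us_{i_j})>\ell(u)$ replacing the separated-descents condition, checking that the hypothesis is inherited by $(u,vs_i)$ via support-invariance of reduced words, and then invoking Proposition~\ref{proposition:mulmonomial} unchanged.
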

The proof of this result is virtually identical and we will not include it. One obvious case where this works is where $u$ is arbitrary and $v$ is dominant, so that $v=\monom{v}$ and $v^{-1}\monom{v}$ is the identity. We illustrate a less trivial case, computing all of the coefficients that occur in the product.

\begin{example}
Let $u=[4,1,3,2]$ and let $v=[3,1,4,2]$. Then $u$ and $v$ do not satisfy the conditions of Theorem \ref{theorem:main}, but Theorem \ref{theorem:moregeneral} applies. We have
\begin{align*}
\monom{v}&=[3,4,1,2]\\
\code(\monom v)&=(2,2)\\
\lambda(v)&=(2,2)\\
v^{-1}\monom{v}&=[1,3,2]
\end{align*}
\begin{center}
\begin{tabular}{ccc}
\multicolumn{3}{c}{$[4,1,3,2]$}\\
4&\multicolumn{1}{|c}{\mycircled{4}}&\mycircled{4}\\
1&\multicolumn{1}{|c}{\mycircled{1}}&3\\
\cline{2-3}
3&3&1\\
2&2&2\\
\multicolumn{3}{c}{\makebox[0pt]{$(y_{1} - z_{1})(y_{4} - z_{1})(y_{4} - z_{2})$}}
\end{tabular}
$\phantom{(y_1-z_1)(y_4-z_2)}$
\begin{tabular}{ccc}
\multicolumn{3}{c}{$[4,1,3,2]$}\\
4&\multicolumn{1}{|c}{\mycircled{4}}&\mycircled{4}\\
1&\multicolumn{1}{|c}{3}&\mycircled{3}\\
\cline{2-3}
3&1&1\\
2&2&2\\
\multicolumn{3}{c}{\makebox[0pt]{$(y_{3} - z_{2})(y_{4} - z_{1})(y_{4} - z_{2})$}}
\end{tabular}
$\phantom{(y_1-z_1)(y_4-z_2)}$
\begin{tabular}{ccc}
\multicolumn{3}{c}{\makebox[0pt]{$[4,2,3,1]$}}\\
4&\multicolumn{1}{|c}{\mycircled{4}}&\mycircled{4}\\
1&\multicolumn{1}{|c}{2}&3\\
\cline{2-3}
3&3&2\\
2&1&1\\
\multicolumn{3}{c}{\makebox[0pt]{$(y_{4} - z_{1})(y_{4} - z_{2})$}}
\end{tabular}
\end{center}
$$((y_1-z_1)(y_4-z_1)(y_4-z_2)+(y_3-z_2)(y_4-z_1)(y_4-z_2))\sch_{[4,1,3,2]}(x;y)+(y_4-z_1)(y_4-z_2)\sch_{[4,2,3,1]}(x;y)$$
\begin{center}
\begin{tabular}{ccc}
\multicolumn{3}{c}{\makebox[0pt]{$[5,1,3,2,4]$}}\\
4&\multicolumn{1}{|c}{\mycircled{4}}&5\\
1&\multicolumn{1}{|c}{3}&\mycircled{3}\\
\cline{2-3}
3&1&1\\
2&2&2\\
5&5&4\\
\multicolumn{3}{c}{\makebox[0pt]{$(y_{3} - z_{2})(y_{4} - z_{1})$}}
\end{tabular}
$\phantom{(y_4-z_1)}$
\begin{tabular}{ccc}
\multicolumn{3}{c}{\makebox[0pt]{$[5,1,3,2,4]$}}\\
4&\multicolumn{1}{|c}{\mycircled{4}}&5\\
1&\multicolumn{1}{|c}{\mycircled{1}}&3\\
\cline{2-3}
3&3&1\\
2&2&2\\
5&5&4\\
\multicolumn{3}{c}{\makebox[0pt]{$(y_{1} - z_{1})(y_{4} - z_{1})$}}
\end{tabular}
$\phantom{(y_4-z_1)}$
\begin{tabular}{ccc}
\multicolumn{3}{c}{\makebox[0pt]{$[5,1,3,2,4]$}}\\
4&\multicolumn{1}{|c}{5}&\mycircled{5}\\
1&\multicolumn{1}{|c}{\mycircled{1}}&3\\
\cline{2-3}
3&1&1\\
2&2&2\\
5&4&4\\
\multicolumn{3}{c}{\makebox[0pt]{$(y_{1} - z_{1})(y_{5} - z_{2})$}}
\end{tabular}
$\phantom{(y_4-z_1)}$
\begin{tabular}{ccc}
\multicolumn{3}{c}{\makebox[0pt]{$[5,1,3,2,4]$}}\\
4&\multicolumn{1}{|c}{5}&\mycircled{5}\\
1&\multicolumn{1}{|c}{3}&\mycircled{3}\\
\cline{2-3}
3&1&1\\
2&2&2\\
5&4&4\\
\multicolumn{3}{c}{\makebox[0pt]{$(y_{3} - z_{2})(y_{5} - z_{2})$}}
\end{tabular}
\end{center}
$$((y_3-z_2)(y_4-z_1)+(y_1-z_1)(y_4-z_1)+(y_1-z_1)(y_5-z_1)+(y_3-z_2)(y_5-z_2))\sch_{[5,1,3,2,4]}(x;y)$$
\begin{center}
\begin{tabular}{ccc}
\multicolumn{3}{c}{\makebox[0pt]{$[4,1,5,2,3]$}}\\
4&\multicolumn{1}{|c}{\mycircled{4}}&\mycircled{4}\\
1&\multicolumn{1}{|c}{3}&5\\
\cline{2-3}
3&1&1\\
2&2&2\\
5&5&3\\
\multicolumn{3}{c}{\makebox[0pt]{$(y_{4} - z_{1})(y_{4} - z_{2})$}}
\end{tabular}
$\phantom{(y_4-z_1)}$
\begin{tabular}{ccc}
\multicolumn{3}{c}{\makebox[0pt]{$[5,1,4,2,3]$}}\\
4&\multicolumn{1}{|c}{5}&\mycircled{5}\\
1&\multicolumn{1}{|c}{3}&4\\
\cline{2-3}
3&1&1\\
2&2&2\\
5&4&3\\
\multicolumn{3}{c}{\makebox[0pt]{$y_5- z_{2}$}}
\end{tabular}
$\phantom{(y_4-z_1)}$
\begin{tabular}{ccc}
\multicolumn{3}{c}{\makebox[0pt]{$[5,1,4,2,3]$}}\\
4&\multicolumn{1}{|c}{\mycircled{4}}&5\\
1&\multicolumn{1}{|c}{3}&4\\
\cline{2-3}
3&1&1\\
2&2&2\\
5&5&3\\
\multicolumn{3}{c}{\makebox[0pt]{$y_4-z_1$}}
\end{tabular}
\end{center}
$$(y_4-z_1)(y_4-z_2)\sch_{[4,1,5,2,3]}(x;y)+((y_5-z_2)+(y_4-z_1))\sch_{[5,1,4,2,3]}(x;y)$$
\begin{center}
\begin{tabular}{ccc}
\multicolumn{3}{c}{\makebox[0pt]{$[5,2,3,1,4]$}}\\
4&\multicolumn{1}{|c}{5}&\mycircled{5}\\
1&\multicolumn{1}{|c}{2}&3\\
\cline{2-3}
3&3&2\\
2&1&1\\
5&4&4\\
\multicolumn{3}{c}{\makebox[0pt]{$y_5-z_2$}}
\end{tabular}
$\phantom{(y_4-z_1)}$
\begin{tabular}{ccc}
\multicolumn{3}{c}{\makebox[0pt]{$[5,2,3,1,4]$}}\\
4&\multicolumn{1}{|c}{\mycircled{4}}&5\\
1&\multicolumn{1}{|c}{2}&3\\
\cline{2-3}
3&3&2\\
2&1&1\\
5&5&4\\
\multicolumn{3}{c}{\makebox[0pt]{$y_4-z_1$}}
\end{tabular}
\end{center}
$$((y_5-z_2)+(y_4-z_1))\sch_{[5,2,3,1,4]}(x;y)$$
\begin{center}
\begin{tabular}{ccc}
\multicolumn{3}{c}{\makebox[0pt]{$[6,1,3,2,4,5]$}}\\
4&\multicolumn{1}{|c}{5}&6\\
1&\multicolumn{1}{|c}{\mycircled{1}}&3\\
\cline{2-3}
3&3&1\\
2&2&2\\
5&4&4\\
6&6&5\\
\multicolumn{3}{c}{\makebox[0pt]{$y_1-z_1$}}
\end{tabular}
$\phantom{(y_4-z_1)}$
\begin{tabular}{ccc}
\multicolumn{3}{c}{\makebox[0pt]{$[6,1,3,2,4,5]$}}\\
4&\multicolumn{1}{|c}{5}&6\\
1&\multicolumn{1}{|c}{3}&\mycircled{3}\\
\cline{2-3}
3&1&1\\
2&2&2\\
5&4&4\\
6&6&5\\
\multicolumn{3}{c}{\makebox[0pt]{$y_3-z_2$}}
\end{tabular}
$\phantom{(y_4-z_1)}$
\begin{tabular}{ccc}
\multicolumn{3}{c}{\makebox[0pt]{$[6,1,4,2,3,5]$}}\\
4&\multicolumn{1}{|c}{5}&6\\
1&\multicolumn{1}{|c}{3}&4\\
\cline{2-3}
3&1&1\\
2&2&2\\
5&4&3\\
6&6&5\\
\multicolumn{3}{c}{$1$}
\end{tabular}
$\phantom{(y_4-z_1)}$
\begin{tabular}{ccc}
\multicolumn{3}{c}{\makebox[0pt]{$[6,2,3,1,4,5]$}}\\
4&\multicolumn{1}{|c}{5}&6\\
1&\multicolumn{1}{|c}{2}&3\\
\cline{2-3}
3&3&2\\
2&1&1\\
5&4&4\\
6&6&5\\
\multicolumn{3}{c}{$1$}
\end{tabular}
\end{center}
$$((y_1-z_1)+(y_3-z_2))\sch_{[6,1,3,2,4,5]}(x;y)+\sch_{[6,1,4,2,3,5]}(x;y)+\sch_{[6,2,3,1,4,5]}(x;y)$$
\end{example}

\section{Python package for computing Schubert products}

We have written a python package \verb|schubmult| that is available in the central python package repository which requires python $3.9$ or higher that allows one to compute products of ordinary Schubert polynomials, products of double Schubert polynomials in the same set of variables, and products of double Schubert polynomials in different sets of variables. When one installs the package with 
\begin{center}
\begin{BVerbatim}
pip install schubmult 
\end{BVerbatim}
\end{center}
this installs three executables on the user's system: \verb|schubmult_py| for ordinary Schubert polynomials (so named so as not to conflict with existing software named \verb|schubmult|), \verb|schubmult_double| for double Schubert polynomials in the same set of variables, with the result expressed with nonnegative coefficients in terms of the negative roots, and \verb|schubmult_yz| for computing Molev-Sagan products of double Schubert polynomials. It is entirely platform independent, provided the python installation can be configured properly on the system. We have tested it on both Linux and Windows.

It is recommended you periodically check for new versions, which you can do directly by running
\begin{center}
\begin{BVerbatim}
pip install schubmult --upgrade
\end{BVerbatim}
\end{center}
when you already have \verb|schubmult|.

The method of calculation is to express one of the (double) Schubert polynomials in the product in terms of (factorial) elementary symmetric polynomials and apply the Pieri formula. There's no choice of which one to express in this way in the Molev-Sagan case, but for \verb|schubmult_py| and \verb|schubmult_double| it picks the one that is closest to being dominant. Even in the ordinary Schubert polynomial case it is faster than currently existing software in most cases, sometimes dramatically.

All three executables have the same command line syntax, which is the same as the command line syntax for \verb|schubmult| from \verb|lrcalc|. An instructive example is to compute
\begin{center}
\begin{BVerbatim}
schubmult_py 1 2 4 9 11 6 8 12 3 5 7 10 - 6 8 1 2 3 4 7 10 12 14 5 9 11 13
\end{BVerbatim}
\end{center}
The number of permutations on the command line is not limited to two, so we could for example compute the product of three Schubert polynomials as
\begin{center}
\begin{BVerbatim}
schubmult_py 6 1 4 3 2 5 - 7 6 3 5 1 2 4 - 5 1 4 3 2
\end{BVerbatim}
\end{center}
For an example Molev-Sagan product, one could execute
\begin{center}
\begin{BVerbatim}
schubmult_yz 1 3 4 6 2 5 - 2 1 5 7 3 4 6 
\end{BVerbatim}
\end{center}
and the same product in equivariant cohomology can be computed as
\begin{center}
\begin{BVerbatim}
schubmult_double 1 3 4 6 2 5 - 2 1 5 7 3 4 6 
\end{BVerbatim}
\end{center}
where some terms will vanish. 

For all three executables, one can specify the input as the code of the permutation instead of the permutation itself with the command line argument \verb|-code|. For example,
\begin{center}
\begin{BVerbatim}
schubmult_yz -code 0 1 1 2 - 1 0 2 3
\end{BVerbatim}
\end{center}
The output will then express the permutations as codes as well.

The coefficients in \verb|schubmult_yz| are expressed in terms of $y_i-z_j$. If desired, the coefficients can be displayed positively as polynomials in $y_i-z_j$ using the command line option \verb|--display-positive|, for example
\begin{center}
\begin{BVerbatim}
schubmult_yz 1 3 4 6 2 5 - 2 1 5 7 3 4 6 --display-positive
\end{BVerbatim}
\end{center}
Feasibility of displaying the result positively varies, and one should expect to run into some cases such that the computation will not finish in a reasonable amount of time. As this is still only always possible conjecturally, it is conceivable one could run into a counterexample; if one is found, please email the counterexample to the author. 

The coefficients in \verb|schubmult_double| are subjected to a change of variables in order for them to visibly have nonnegative coefficients.

Though the package was primarily intended to be used through the installed executables, it can also be imported into your own python script and the function \verb|schubmult| used programmatically. For example,
\begin{Verbatim}[commandchars=\\\{\}]
\textcolor{blue}{from} schubmult.schubmult_yz \textcolor{blue}{import} schubmult  
  
coeff_dict = schubmult(\{(\textcolor{red}{1}, \textcolor{red}{3}, \textcolor{red}{4}, \textcolor{red}{6}, \textcolor{red}{2}, \textcolor{red}{5}): \textcolor{red}{1}\}, (\textcolor{red}{2}, \textcolor{red}{1}, \textcolor{red}{5}, \textcolor{red}{7}, \textcolor{red}{3}, \textcolor{red}{4}, \textcolor{red}{6}))
\end{Verbatim}
The return value is a python \verb|dict| where the keys are permutations, which are expressed as \verb|tuple|s of \verb|int|s, and the values are either \verb|int|s (for \verb|schubmult.schubmult_py|) or \verb|symengine| objects (which can be polynomials). The \verb|dict| representation is intended to represent a linear combination of (double) Schubert polynomials with the values as the coefficients. Note that there may be values of $0$ in the resulting \verb|dict|, or values that simplify to $0$ but do not appear immediately to be $0$ if they are \verb|symengine| objects. No $0$ values will be displayed in \verb|schubmult_py| or \verb|schubmult_double|, but there may be coefficients displayed in \verb|schubmult_yz| that simplify to $0$. Simplifying a polynomial is an expensive calculation, and in the interest of saving time this is not done in the output, except in \verb|schubmult_double|.

In addition to python, \verb|schubmult| is also functional in Sage.

Visit the pypi page at 
\begin{center}
\mbox{\textcolor{blue}{https://pypi.org/project/schubmult}} 
\end{center}
for updates on the latest version.
\bibliographystyle{acm}
\bibliography{formschub}

\begin{thebibliography}{10}

\bibitem{bbrc}
{\sc Bergeron, N., and Billey, S.}
\newblock {R}{C}-graphs and {S}chubert polynomials.
\newblock {\em Experimental Math. 2}, 4 (1993), 257--269.

\bibitem{bsskew}
{\sc Bergeron, N., and Sottile, F.}
\newblock Skew {S}chubert functions and the {P}ieri formula for flag manifolds.
\newblock {\em Trans. Amer. Math. Soc. 354}, 2 (2001), 651--673.

\bibitem{bgg}
{\sc Bernstein, I.~N., Gelfand, I.~M., and Gelfand, S.~I.}
\newblock Schubert cells and the cohomology of the spaces {G}{/}{P}.
\newblock {\em Russian Math. Surveys 28\/} (1973), 1--26.

\bibitem{combcox}
{\sc Bj{\"o}rner, A., and Brenti, F.}
\newblock {\em Combinatorics of {C}oxeter {G}roups}.
\newblock Springer, 2005.

\bibitem{buch2015mutations}
{\sc Buch, A.~S.}
\newblock Mutations of puzzles and equivariant cohomology of two-step flag
  varieties.
\newblock {\em Ann. of Math.\/} (2015), 173--220.

\bibitem{bump2011factorial}
{\sc Bump, D., McNamara, P.~J., and Nakasuji, M.}
\newblock Factorial {S}chur functions and the {Y}ang-{B}axter equation.
\newblock {\em arXiv preprint arXiv:1108.3087\/} (2011).

\bibitem{chen2004skew}
{\sc Chen, W.~Y., Yan, G.-G., and Yang, A.~L.}
\newblock The skew {S}chubert polynomials.
\newblock {\em European J. Combin. 25}, 8 (2004), 1181--1196.

\bibitem{groth_puzzle}
{\sc Fan, N.~J., Guo, P.~L., and Xiong, R.}
\newblock Bumpless pipe dreams meet {P}uzzles.
\newblock {\em arXiv preprint arXiv:2309.00467\/} (2023).

\bibitem{graham2001positivity}
{\sc Graham, W.}
\newblock Positivity in equivariant {S}chubert calculus.
\newblock {\em Duke Math. J. 109}, 3 (2001), 599--614.

\bibitem{question}
{\sc (https://mathoverflow.net/users/62135/matt samuel), M.~S.}
\newblock Product of a {S}chubert polynomial and a double {S}chubert
  polynomial.
\newblock MathOverflow.
\newblock URL:https://mathoverflow.net/q/212762 (version: 2016-09-06).

\bibitem{Huang_2022}
{\sc Huang, D.}
\newblock Schubert products for permutations with separated descents.
\newblock {\em International Mathematics Research Notices\/} (November 2022).

\bibitem{kirillov2007skew}
{\sc Kirillov, A.~N.}
\newblock Skew divided difference operators and {S}chubert polynomials.
\newblock {\em SIGMA. Symmetry, Integrability and Geometry: Methods and
  Applications 3\/} (2007), 072.

\bibitem{knutsonpipe}
{\sc Knutson, A.}
\newblock Schubert polynomials, pipe dreams, equivariant classes, and a
  co-transition formula.
\newblock {\em arXiv.org:1909.13777\/} (2019).

\bibitem{knutson2003puzzles}
{\sc Knutson, A., and Tao, T.}
\newblock Puzzles and (equivariant) cohomology of {G}rassmannians.
\newblock {\em Duke Math. J. 119}, 2 (2003), 221--260.

\bibitem{kzj}
{\sc Knutson, A., and Zinn-Justin, P.}
\newblock Schubert calculus and quiver varieties.
\newblock https://pi.math.cornell.edu/allenk/boston2019.pdf, 2019.
\newblock Accessed 2023-06-10.

\bibitem{knutson_separated}
{\sc Knutson, A., and Zinn-Justin, P.}
\newblock Schubert puzzles and integrability {I}{I}{I}: separated descents.
\newblock {\em arXiv preprint arXiv:2306.13855\/} (2023).

\bibitem{kohnert1997multiplication}
{\sc Kohnert, A.}
\newblock Multiplication of a {S}chubert polynomial by a {S}chur polynomial.
\newblock {\em Ann. Comb. 1}, 1 (1997), 367--375.

\bibitem{kostant1986nil}
{\sc Kostant, B., and Kumar, S.}
\newblock The nil {H}ecke ring and cohomology of {G}/{P} for a {K}ac-{M}oody
  group {G}.
\newblock {\em Proceedings of the National Academy of Sciences 83}, 6 (1986),
  1543--1545.

\bibitem{lsschub}
{\sc Lascoux, A., and {Sch\"utzenberger}, M.}
\newblock Polyn{\^o}mes de {S}chubert.
\newblock {\em Comptes Rendus de l{'}Acad{\'e}mie des Sciences, S{\'e}rie I
  294}, 13 (1982), 447--450.

\bibitem{lenart2003skew}
{\sc Lenart, C., and Sottile, F.}
\newblock Skew {S}chubert polynomials.
\newblock {\em Proc. Amer. Math. Soc. 131}, 11 (2003), 3319--3328.

\bibitem{notes}
{\sc Macdonald, I.~G.}
\newblock {\em Notes on Schubert Polynomials}.
\newblock Universit{\'e} Du Qu{\'e}bec {\`a} Montr{\'e}al, D{\'e}p. de
  math{\'e}matiques et d{'}informatique, 1991.

\bibitem{molev2009littlewood}
{\sc Molev, A.}
\newblock Littlewood--{R}ichardson polynomials.
\newblock {\em Journal of Algebra 321}, 11 (2009), 3450--3468.

\bibitem{molev1999littlewood}
{\sc Molev, A., and Sagan, B.}
\newblock A {L}ittlewood-{R}ichardson rule for factorial {S}chur functions.
\newblock {\em Trans. Amer. Math. Soc. 351}, 11 (1999), 4429--4443.

\bibitem{grass}
{\sc Purbhoo, K., and Sottile, F.}
\newblock A {L}ittlewood-{R}ichardson rule for {G}rassmannian permutations.
\newblock {\em Proc. Am. Math. Soc. 137\/} (2009), 1875--1882.

\bibitem{robinsonpieri}
{\sc Robinson, S.}
\newblock A {P}ieri-{T}ype {F}ormula for {$H^*T (SL_n (C)/B)$}.
\newblock {\em J. Algebra 249}, 1 (2002), 38--58.

\bibitem{samuelleibniz}
{\sc Samuel, M.~J.}
\newblock {\em The {L}eibniz formula for divided difference operators
  associated to {K}ac-{M}oody root systems}.
\newblock PhD thesis, Rutgers, The State University of New Jersey, 2014.

\bibitem{sottile}
{\sc Sottile, F.}
\newblock Pieri{'}s rule for flag manifolds and {S}chubert polynomials.
\newblock {\em Ann. Inst. Fourier 46}, 1 (1996), 89--110.

\bibitem{tamvakis2020tableau}
{\sc Tamvakis, H.}
\newblock Tableau formulas for skew {S}chubert polynomials.
\newblock {\em arXiv preprint arXiv:2008.07034\/} (2020).

\end{thebibliography}
\end{document}